\numberwithin{equation}{section}
\newtheorem{theorem}{Theorem}[section]
\newtheorem{lemma}[theorem]{Lemma}
\newtheorem{proposition}[theorem]{Proposition}
\newtheorem{setup}[theorem]{Setup}
\theoremstyle{definition}
\newtheorem{definition}[theorem]{Definition}
\newtheorem{remark}[theorem]{Remark}
\newtheorem{problem}[theorem]{Problem}
\newcommand{\BP}{{\mathbb P}}
\newcommand{\BQ}{{\mathbb Q}}
\newcommand{\BZ}{{\mathbb Z}}
\newcommand{\sD}{\mathsf D}
\newcommand{\sK}{\mathsf K}
\newcommand{\sT}{\mathsf T}
\newcommand{\amp}{\operatorname{amp}}
\newcommand{\Chain}{\operatorname{C}}
\newcommand{\Dcat}{\sD}
\newcommand{\Dcatc}{\sD^{\operatorname{c}}}
\newcommand{\Dcatf}{\sD^{\operatorname{f}}}
\newcommand{\dual}{\operatorname{D}}
\newcommand{\Ext}{\operatorname{Ext}}
\newcommand{\Homology}{\operatorname{H}}
\newcommand{\Hom}{\operatorname{Hom}}
\newcommand{\LTensor}{\stackrel{\operatorname{L}}{\otimes}}
\newcommand{\opp}{\operatorname{o}}
\newcommand{\RHom}{\operatorname{RHom}}
\title[Calabi-Yau categories and Poincar\'{e} duality
spaces]{Calabi-Yau categories and Poincar\'{e} du\-a\-li\-ty spaces} 
\author[Peter J\o rgensen]{Peter J\o rgensen}
\begin{document}

\begin{abstract}  
  The singular cochain complex of a topological space is a classical
  object.  It is a Differential Graded algebra which has been studied
  intensively with a range of methods, not least within rational
  homotopy theory.
  
  More recently, the tools of Auslander-Reiten theory have also been
  applied to the singular cochain complex.  One of the highlights is
  that by these methods, each Poincar\'{e} duality space gives rise to
  a Calabi-Yau category.  This paper is a review of the theory.
\end{abstract}

\begin{classification}
  Primary 16E45, 16G70, 55P62.
\end{classification}

\begin{keywords}
  Algebraic topology, Auslander-Reiten quivers, Auslander-Reiten
  theory, compact objects, derived categories, Differential Graded
  algebras, Differential Graded homological algebra, Differential
  Graded modules, labelled quivers, representation types, Riedtmann
  Structure Theorem, singular cochain complexes, tame and wild,
  topological spaces.
\end{keywords}

\maketitle

\section{Introduction}

Finite dimensional algebras over a field are classical, well studied
mathematical objects.  Their representation theory is a particularly
large and active area which has inspired a number of powerful
mathematical techniques, not least Auslander-Reiten theory which is a
beautiful and effective set of tools and ideas.  See Appendix
\ref{app:AR} and the references listed there for an introduction.

It seems reasonable to look for applications of Auslander-Reiten (AR)
theory to areas outside representation theory.  Specifically, let $X$
be a topological space.  The singular cochain complex $\Chain^*(X;k)$
with coefficients in a field $k$ of characteristic $0$ is a
Differential Graded algebra which has been studied intensively, in
particular in rational homotopy theory, see \cite{FHTbook}.  For an
introduction to Differential Graded (DG) homological algebra, see
Appendix \ref{app:DG} and the references listed there.  The singular
cohomology $\Homology^*(X;k)$ is defined as the cohomology of the
complex $\Chain^*(X;k)$; it is a graded algebra.  Now let $X$ be
simply connected with $\dim_k \Homology^*(X;k) < \infty$; then
$\Chain^*(X;k)$ is quasi-isomorphic to a DG algebra $R$ with $\dim_k R
< \infty$, and it is natural to try to apply AR theory to $R$.  This
was the subject of \cite{artop}, \cite{arquiv}, and \cite{Schmidt},
and the object of this paper is to review the results of those papers.

Among the highlights is Theorem \ref{thm:Chain_CY} from which comes
the title of the paper.  Consider the derived category of DG
left-$R$-modules, $\Dcat(R)$, which is equivalent to
$\Dcat(\Chain^*(X;k))$ since the two DG algebras are quasi-isomorphic.
The latter category has the full subcategory $\Dcatc(\Chain^*(X;k))$
consisting of compact DG modules; these play the role of finitely
generated representations.  Theorem \ref{thm:Chain_CY} now says that
if $k$ has characteristic $0$, then
\begin{align}
\label{equ:z}
  & \mbox{$\Dcatc(\Chain^*(X;k))$ is an $n$-Calabi-Yau category} \\
\nonumber
  & \mbox{$\Leftrightarrow$ $X$ has $n$-dimensional Poincar\'{e}
    duality over $k$.}
\end{align}

Let me briefly explain the terminology.  A triangulated category
$\sT$, such as for instance $\Dcatc(\Chain^*(X;k))$, is called
$n$-Calabi-Yau if $n$ is the smallest non-negative integer for which
$\Sigma^n$, the $n$th power of the suspension functor, is a Serre
functor, that is, permits natural isomorphisms
\[
  \Hom_k(\Hom_{\sT}(M,N),k) \cong \Hom_{\sT}(N,\Sigma^n M).
\]
The topological space $X$ is said to have $n$-dimensional Poincar\'{e}
duality over $k$ if there is an isomorphism
\[
  \Hom_k(\Homology^*(X;k),k) \cong \Sigma^n \Homology^*(X;k)
\]
of graded left-$\Homology^*(X;k)$-modules.

Examples of $n$-Calabi-Yau categories are higher cluster categories,
see \cite[sec.\ 4]{KellerReiten}, and examples of spaces with
$n$-dimensional Poincar\'{e} duality are compact $n$-dimensional
manifolds.  Equation \eqref{equ:z} provides a link between the
currently po\-pu\-lar theory of Calabi-Yau categories and algebraic
topology.  It also gives a new class of examples of Calabi-Yau
categories which, so far, typically have been exemplified by higher
cluster categories.  The new categories appear to behave very
differently from higher cluster categories, cf.\ Section
\ref{sec:open}, Problem \ref{prob:CY}.

A number of other results are also obtained, not least on the
structure of the AR quiver of $\Dcatc(\Chain^*(X;k))$ which, for a
space with Poincar\'{e} duality, consists of copies of the repetitive
quiver $\BZ A_{\infty}$, see Theorem \ref{thm:Chain_Gamma}.

In a speculative vein, the theory presented here ties in with the
version of non-commutative geometry in which a DG algebra, or more
generally a DG category, is viewed as a non-commutative scheme.  The
idea is to think of the derived category of the DG algebra or DG
category as being the derived category of quasi-coherent sheaves on a
non-commutative scheme (which does not actually exist).  There appear
so far to be no published references for this viewpoint which has been
brought forward by Drinfeld and Kontsevich, but it does seem to call
for a detailed study of the derived categories of DG algebras and DG
categories.  Auslander-Reiten theory is an obvious tool to try, and
\cite{artop}, \cite{arquiv}, and \cite{Schmidt} along with this paper
can, perhaps, be viewed as a first, modest step.

As indicated, the paper is a review.  The results were known
previously, the main references being \cite{artop}, \cite{arquiv}, and
\cite{Schmidt}; more details of the origin of individual results are
given in the introductions to the sections.  There is no claim to
o\-ri\-gi\-na\-li\-ty, except that some of the proofs are new.  It is
also the first time this material has appeared together.

Most of the paper is phrased in terms of the DG algebra $R$ rather
than $\Chain^*(X;k)$, see Setup \ref{set:blanket}.  This is merely a
notational convenience: $R$ and $\Chain^*(X;k)$ are quasi-isomorphic,
so have equivalent derived categories.  Hence, all results about the
derived category of $R$ also hold for the derived category of
$\Chain^*(X;k)$.  The paper is organized as follows:

Some background on DG homological algebra and AR theory is collected
in two appendices, \ref{app:DG} and \ref{app:AR}.

Section \ref{sec:cochain} gives some preliminary results on cochain DG
algebras and their DG modules.  The main result is Theorem
\ref{thm:Gorenstein} which gives a number of alternative descriptions
of when $R$ is a so-called Gorenstein DG algebra.  The importance of
this condition is that $\Chain^*(X;k)$ is Gorenstein precisely when
$X$ has Poincar\'{e} duality.

Section \ref{sec:AR} studies the existence of AR
triangles in the category $\Dcatc(R)$, which turns out to be
equivalent to $R$ being Gorenstein by Theorem \ref{thm:R}.

Section \ref{sec:local} considers the local structure of the AR quiver
$\Gamma$ of $\Dcatc(R)$.  If $R$ is Gorenstein with $\dim_k
\Homology\!R \geq 2$, then Theorem \ref{thm:quiver_local_structure}
shows that each component of $\Gamma$ is isomorphic to $\BZ
A_{\infty}$.

Section \ref{sec:global} reports on work by Karsten Schmidt.  It looks
at the global structure of $\Gamma$ where the results are so far less
conclusive.  If $\dim_k \Homology\!R = 2$, then $\Gamma$ has precisely
$d-1$ components isomorphic to $\BZ A_{\infty}$, where $d = \sup \{\,
i \,|\, \Homology^i\!R \neq 0 \,\}$.  On the other hand, if $R$ is
Gorenstein with $\dim_k \Homology\!R \geq 3$, then $\Gamma$ has
infinitely many components, and if $\dim_k \Homology^e\!R \geq 2$ for
some $e$, then it is even possible to find families of distinct
components indexed by projective manifolds, and these manifolds can be
of arbitrarily high dimension.

Section \ref{sec:topology} makes explicit the highlights of the theory
for the algebras $\Chain^*(X;k)$.

Section \ref{sec:open} is a list of open problems.

\medskip
\noindent
{\bf Acknowledgement.}
Some of the results of this paper, not least the ones of Section
\ref{sec:global}, are due to Karsten Schmidt.  I thank him for a
number of communications on his work, culminating in \cite{Schmidt}.
I thank Henning Krause, Andrzej Skowronski, and the referee for
comments to a previous version of the paper.  I am grateful to Andrzej
Skowronski for the very succesful organization of ICRA XII in Torun,
August 2007, and for inviting me to submit this paper to the ensuing
volume ``Trends in Representation Theory of Algebras and Related
Topics''.

\section{Cochain Differential Graded algebras}
\label{sec:cochain}

This section provides some results on cochain Differential Graded (DG)
algebras, not least on the ones which are Gorenstein.  The results
first appeared in \cite{artop}, except Lemma \ref{lem:inf} which is
\cite[lem.\ 1.5]{FJcochain} and Theorem \ref{thm:k} which is
\cite[cor.\ 3.12]{Schmidt}.

For background and terminology on DG algebras and their derived
categories, see Appendix \ref{app:DG}.

\begin{setup}
\label{set:blanket}
In Sections \ref{sec:cochain} through \ref{sec:open}, $k$ is a
field and $R$ is a DG algebra over $k$ which has the form 
\[
  \cdots \rightarrow 0 \rightarrow k \rightarrow 0 \rightarrow R^2
  \rightarrow R^3 \rightarrow \cdots,
\]
that is, $R^{<0} = 0$, $R^0 = k$, and $R^1 = 0$.  It will be assumed
that $\dim_k R < \infty$, and throughout,
\[
  d = \sup R
\]
where $\sup$ is as in Definition \ref{def:sup_and_inf}.
\end{setup}

Note that either $d = 0$, in which case $R$ is quasi-isomorphic to
$k$, or $d \geq 2$.

\begin{remark}
If $X$ is a simply connected topological space with $\dim_k
\Homology^*(X;k) < \infty$ and $k$ has characteristic $0$, then
$\Chain^*(X;k)$ is quasi-isomorphic to a DG algebra $R$ satisfying the
conditions of Setup \ref{set:blanket} by \cite[exa.\ 6, p.\ 
146]{FHTbook}.  This means that the derived categories of
$\Chain^*(X;k)$ and $R$ are equivalent, and hence, all results about
the derived category of $R$ also hold for the derived category of
$\Chain^*(X;k)$.

The highlights of the theory will be made explicit for $\Chain^*(X;k)$
in Section \ref{sec:topology}.
\end{remark}

\begin{proposition}
\label{pro:DcinDf}
The full subcategory $\Dcatc(R)$ of compact objects of the derived
category $\Dcat(R)$ is contained in $\Dcatf(R)$, the full subcategory
of $\Dcat(R)$ of objects with $\dim_k \Homology\!M < \infty$.
\end{proposition}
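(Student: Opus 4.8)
The plan is to reduce to the case of a single compact generator and then exploit the fact that $R$ itself, regarded as a DG module over itself, already lies in $\Dcatf(R)$ because $\dim_k R < \infty$ by Setup \ref{set:blanket}. First I would recall that $\Dcatc(R)$ is by definition the smallest full triangulated subcategory of $\Dcat(R)$ containing $R$ and closed under direct summands (i.e.\ it is the thick subcategory generated by $R$). So it suffices to show that the full subcategory $\Dcatf(R)$ is itself a thick subcategory of $\Dcat(R)$ which contains $R$; then $\Dcatc(R) \subseteq \Dcatf(R)$ follows immediately by minimality.

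The three things to check are therefore: (1) $R \in \Dcatf(R)$, which is clear since $\dim_k \Homology\!R \leq \dim_k R < \infty$; (2) $\Dcatf(R)$ is closed under suspensions and under forming cones of morphisms, that is, it is a triangulated subcategory; and (3) $\Dcatf(R)$ is closed under direct summands in $\Dcat(R)$. For (2), given a distinguished triangle $L \rightarrow M \rightarrow N \rightarrow \Sigma L$ with $L$ and $M$ in $\Dcatf(R)$, the long exact cohomology sequence gives, in each degree $i$, an exact sequence $\Homology^i\!L \rightarrow \Homology^i\!M \rightarrow \Homology^i\!N \rightarrow \Homology^{i+1}\!L$, and since the two outer-neighbour terms are finite-dimensional in every degree and $\Homology\!M$ is finite-dimensional in total, one deduces that $\Homology\!N$ is finite-dimensional in total as well; closure under $\Sigma^{\pm 1}$ is trivial since suspension just shifts cohomology. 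For (3), if $N \oplus N' \cong M$ with $M \in \Dcatf(R)$, then $\Homology\!N$ is a direct summand of $\Homology\!M$ as a graded vector space, hence also finite-dimensional.

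The only point that requires a word of care — and the one I would flag as the main obstacle — is (3), the closure under summands: one must know that $\Dcatf(R)$ really is a subcategory that is \emph{closed under summands in the ambient category} $\Dcat(R)$, not merely idempotent-complete in the abstract. But this is immediate from the observation just made, since total cohomology dimension is additive over direct sums and hence only decreases when passing to a summand. (Equivalently, one can phrase the whole argument as: the assignment $M \mapsto \dim_k \Homology\!M \in \{0,1,2,\dots,\infty\}$ behaves subadditively on triangles and additively on sums, so the class where it is finite is a thick subcategory.) Putting (1)–(3) together, $\Dcatf(R)$ is a thick subcategory containing $R$, so it contains the thick subcategory generated by $R$, which is exactly $\Dcatc(R)$; this proves the proposition.
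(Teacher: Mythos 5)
Your proposal is correct and follows essentially the same route as the paper: the paper's proof simply observes that ${}_{R}R$ lies in $\Dcatf(R)$ because $\dim_k R < \infty$ and that $\Dcatc(R)$ consists of the objects finitely built from ${}_{R}R$, leaving implicit the verification (which you spell out) that $\Dcatf(R)$ is closed under triangles, (de)suspensions, finite coproducts, and direct summands. Your explicit check of thickness via the long exact cohomology sequence and additivity of $\dim_k \Homology(-)$ is exactly the content the paper takes for granted.
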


\begin{proof}
The DG module ${}_{R}R$ is in $\Dcatf(R)$ by assumption, and
$\Dcatc(R)$ consists of the DG modules which are finitely built from
it, cf.\ Definition \ref{def:D}, so it follows that $\Dcatc(R)$ is
contained in $\Dcatf(R)$.
\end{proof}

\begin{proposition}
\label{pro:DcandDfKrullSchmidt}
The triangulated categories $\Dcatf(R)$ and $\Dcatc(R)$ have finite
dimensional Hom spaces and split idempotents.

Consequently, $\Dcatf(R)$ and $\Dcatc(R)$ are Krull-Schmidt
categories. 
\end{proposition}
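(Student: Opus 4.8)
The plan is to pin down two properties of $\Dcatf(R)$ and then transport everything to $\Dcatc(R)$ using the inclusion $\Dcatc(R)\subseteq\Dcatf(R)$ of Proposition \ref{pro:DcinDf}. The two properties are: (a) $\Dcatf(R)$ is the thick subcategory of $\Dcat(R)$ generated by the DG module $k$, equivalently, its objects are exactly those admitting a finite filtration by triangles with subquotients finite direct sums of suspensions of $k$; and (b) $\RHom_R(k,k)$ has finite dimensional homology in every cohomological degree. Split idempotents comes essentially for free: $\Dcat(R)$ has arbitrary coproducts, so idempotents split in it, and the image of an idempotent acting on an object of $\Dcatf(R)$ is a direct summand of that object, hence has finite dimensional homology and again lies in $\Dcatf(R)$; for $\Dcatc(R)$ one uses instead that a direct summand of a compact object is compact.

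For (a) I would use that, since $R^{<0}=0$ and $R^0=k$, the DG algebra $R$ is connective with $\Homology^0 R=k$, and since $R^1=0$ the graded ideal $R^{\geq 1}=R^{\geq 2}$ is a DG ideal with $R/R^{\geq 1}=k$; hence $\Dcat(R)$ carries the standard $t$-structure whose heart is the category of $k$-vector spaces, realized as DG modules through the surjection $R\twoheadrightarrow k$. For $M$ in $\Dcatf(R)$ the homology $\Homology M$ is finite dimensional and concentrated in the finitely many degrees between $\inf M$ and $\sup M$, so iterating the truncation triangles of the $t$-structure builds $M$ in finitely many steps from the objects $\Sigma^{-j}\Homology^j M$, each a finite direct sum of suspensions of $k$; the reverse inclusion is clear because $\Dcatf(R)$ is itself a thick subcategory containing $k$. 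For (b) I would compute $\RHom_R(k,k)$ from the two-sided bar resolution of $k$, which is semifree with underlying graded module a sum of copies of $R\otimes (R^{\geq 1})^{\otimes n}$; applying $\Hom_R(-,k)$ kills the factor $R$ and leaves a complex whose degree $j$ part is assembled, up to the regradings built into the bar construction, from the pieces $\Hom_k\bigl((R^{\geq 1})^{\otimes n},k\bigr)$. Because $R^{\geq 1}$ is finite dimensional and concentrated in degrees $\geq 2$, the power $(R^{\geq 1})^{\otimes n}$ is finite dimensional and concentrated in degrees $\geq 2n$, so only finitely many $n$ contribute to any given degree and each contributes finitely many dimensions; thus the complex computing $\RHom_R(k,k)$ is degreewise finite dimensional and bounded on one side, and in particular $\Ext^j_R(k,k)$ is finite dimensional for every $j$.

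Granting (a) and (b), finite dimensionality of the Hom spaces of $\Dcatf(R)$ follows by d\'evissage: for fixed $N$, the objects $M$ with $\Hom_{\Dcat(R)}(M,\Sigma^j N)$ finite dimensional for all $j$ form a thick subcategory of $\Dcat(R)$ by the long exact sequences, so by (a) it suffices to treat $M=k$; running the same argument in the second variable reduces to $M=N=k$, which is (b). The same bound then holds in $\Dcatc(R)\subseteq\Dcatf(R)$. Finally, an idempotent complete additive category all of whose endomorphism rings are semiperfect is Krull-Schmidt (see Appendix \ref{app:AR}); here the endomorphism rings are finite dimensional $k$-algebras, hence Artinian, hence semiperfect, so together with the first paragraph we conclude that $\Dcatf(R)$ and $\Dcatc(R)$ are Krull-Schmidt. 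I expect the main work to sit in (a) and (b) — identifying $\Dcatf(R)$ as the thick closure of $k$, and keeping the bar complex for $\Ext_R(k,k)$ degreewise finite — and it is precisely there that the hypotheses $R^0=k$, $R^1=0$ and $\dim_k R<\infty$ from Setup \ref{set:blanket} are indispensable.
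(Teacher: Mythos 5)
Your proposal is correct and follows the same overall architecture as the paper's proof: reduce finite-dimensionality of Hom spaces by d\'evissage to the single computation $\dim_k\Ext_R(k,k)<\infty$ (using that $\Dcatf(R)$ is the thick closure of ${}_Rk$, which the paper simply quotes as Remark \ref{rmk:Df} where you resupply the $t$-structure truncation argument), get idempotent splitting from the existence of set-indexed coproducts in $\Dcat(R)$, and conclude Krull--Schmidt from idempotent completeness plus finite-dimensional endomorphism rings. The one genuinely different ingredient is the key finiteness computation: the paper takes a \emph{minimal semi-free resolution} $F$ of $\Sigma^i k$ and reads off $\Hom_{\Dcat(R)}(\Sigma^i k,k)\cong\Hom_{R^{\natural}}(F^{\natural},k^{\natural})^0\cong k^{(\beta_0)}$ from Lemma \ref{lem:semi-free}, parts (2), (3) and (5), whereas you compute $\RHom_R(k,k)$ from the bar resolution and use that $(R^{\geq 1})^{\otimes n}$ sits in internal degrees $\geq 2n$, so that after the bar shift only finitely many $n$ contribute to each cohomological degree. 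Both routes hinge on exactly the same hypotheses ($R^0=k$, $R^1=0$, $\dim_k R<\infty$); the paper's route gets the finiteness for free once the machinery of minimal semi-free resolutions (which it needs anyway throughout) is in place, while yours is self-contained and makes the role of the gap $R^1=0$ completely explicit --- note that degrees $\geq 1$ alone would \emph{not} suffice for your degreewise-finiteness claim, so it is good that you flagged this hypothesis as indispensable. The only point I would tighten is the assertion that the bar resolution is semi-free in the sense of the paper's filtration-by-free-subquotients definition: the subquotients $R\otimes(R^{\geq 1})^{\otimes n}$ carry the internal differential of $(R^{\geq 1})^{\otimes n}$, so one should either refine the filtration (splitting each $(R^{\geq 1})^{\otimes n}$ over the field $k$ into its homology and contractible summands) or simply invoke K-projectivity directly; this is routine and does not affect the argument.
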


\begin{proof}
If $M$ is in $\Dcatf(R)$, then it is finitely built from ${}_{R}k$ in
$\Dcat(R)$, see Remark \ref{rmk:Df}.  So to see that $\Dcatf(R)$
has finite dimensional Hom spaces, it is enough to see that
$\Hom_{\Dcat(R)}(\Sigma^i k,k)$ is finite dimensional for each $i$,
where $\Sigma$ denotes the suspension functor of $\Dcat(R)$.

Let $F$ be a minimal semi-free resolution of ${}_{R}(\Sigma^i k)$;
then
\begin{align*}
  \Hom_{\Dcat(R)}(\Sigma^i k,k)
  & \stackrel{\rm (a)}{\cong} \Homology^0 \RHom_R(\Sigma^i k,k) \\
  & \stackrel{\rm (b)}{\cong} \Homology^0 \Hom_R(F,k) \\
  & \stackrel{\rm (c)}{\cong} \Hom_{R^{\natural}}(F^{\natural},k^{\natural})^0\\
  & = (*)
\end{align*}
where (a) is by Definition \ref{def:Hom_and_Tensor} and (b) and (c)
are by Lemma \ref{lem:semi-free}, (2) and (5).  However, Lemma
\ref{lem:semi-free}(3) says that $F^{\natural} \cong \bigoplus_{j \leq
i} \Sigma^j(R^{\natural})^{(\beta_j)}$ with the $\beta_j$ finite,
where superscript $(\beta)$ indicates the direct sum of $\beta$ copies
of the module, and so
\[
  (*) \cong k^{(\beta_0)}.
\]
This is finite dimensional.

Since $\Dcatc(R)$ is contained in $\Dcatf(R)$ by Proposition
\ref{pro:DcinDf}, it follows that $\Dcatc(R)$ also has finite
dimensional Hom spaces.

Idempotents split in both $\Dcatf(R)$ and $\Dcatc(R)$ since
by \cite[prop.\ 3.2]{BN} they split already in $\Dcat(R)$ because this
is a triangulated category with set indexed co\-pro\-ducts.

By \cite[p.\ 52]{Ringel}, both $\Dcatf(R)$ and $\Dcatc(R)$ are
Krull-Schmidt categories.
\end{proof}

Recall from Definition \ref{def:sup_and_inf} the notion of $\inf$ of a
DG module, and from Definition \ref{def:DG} that $R^{\opp}$ is the
opposite DG algebra of $R$ and that DG left-$R^{\opp}$-modules can be
viewed as DG right-$R$-modules.

\begin{lemma}
\label{lem:inf}
Let $M$ be in $\Dcatf(R^{\opp})$ and let $N$ be in $\Dcatf(R)$.  Then 
\[
  \inf(M \LTensor_R N) = \inf M + \inf N.
\]
\end{lemma}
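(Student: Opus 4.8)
The plan is to compute both sides with minimal semi-free resolutions. Write $s = \inf M$ and $t = \inf N$; if $M$ or $N$ is acyclic then both sides of the claimed identity equal $+\infty$, so assume neither is (an object of $\Dcatf$ has finite total homology, hence bounded homology, so $s$ and $t$ are then integers). Choose minimal semi-free resolutions $P$ of $M$ over $R^{\opp}$ and $F$ of $N$ over $R$; these exist by Lemma \ref{lem:semi-free}, and note that $R^{\opp}$ satisfies the hypotheses of Setup \ref{set:blanket} exactly as $R$ does. Then $M \LTensor_R N \simeq P \otimes_R F$ by Definition \ref{def:Hom_and_Tensor}, so it suffices to prove $\inf(P \otimes_R F) = s + t$. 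By Lemma \ref{lem:semi-free}(3), $P^{\natural}$ is free over $R^{\natural}$ on a graded basis concentrated in degrees $\geq s$, with the degree-$s$ part nonzero because $\Homology^s\! M \neq 0$; symmetrically $F^{\natural}$ is free over $R^{\natural}$ on a basis concentrated in degrees $\geq t$ with the degree-$t$ part nonzero. Since $R^{<0} = 0$, it follows that $P^{<s} = 0$ and $F^{<t} = 0$ as graded modules.

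For the inequality $\inf(P \otimes_R F) \geq s+t$: choosing a right $R^{\natural}$-basis of $P^{\natural}$, the $k$-vector space $(P \otimes_R F)^m$ is a direct sum over the basis elements $e$ of the spaces $F^{m - \degree e}$; since $\degree e \geq s$ and $F^{<t} = 0$, this vanishes for $m < s+t$, and hence $\Homology^m(P \otimes_R F) = 0$ for $m < s+t$.

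For the reverse inequality I would show $\Homology^{s+t}(P \otimes_R F) \neq 0$. By the same description, $(P \otimes_R F)^{s+t}$ is the tensor product over $k$ of the span of the degree-$s$ basis elements of $P$ with $F^t$, a nonzero $k$-vector space, while $(P \otimes_R F)^{s+t-1} = 0$. So it is enough to check that the differential of $P \otimes_R F$ vanishes on $(P \otimes_R F)^{s+t}$, and here the hypothesis $R^1 = 0$ is what does the work. Minimality of $F$ gives $d(F) \subseteq R^{\geq 1}\! F = R^{\geq 2}\! F$, which in degree $t$ forces $d(F^t) \subseteq \sum_{j \geq 2} R^j F^{t+1-j} = 0$ because $F^{<t} = 0$; symmetrically, minimality of the right DG module $P$ gives $d(P^s) = 0$. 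By the Leibniz rule the differential of $P \otimes_R F$ therefore annihilates every $e \otimes f$ with $\degree e = s$ and $f \in F^t$, i.e.\ all of $(P \otimes_R F)^{s+t}$. Together with $(P \otimes_R F)^{s+t-1} = 0$ this gives $\Homology^{s+t}(P \otimes_R F) = (P \otimes_R F)^{s+t} \neq 0$.

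Combining the two inequalities yields $\inf(M \LTensor_R N) = \inf(P \otimes_R F) = s + t$. The one ingredient carrying real weight is Lemma \ref{lem:semi-free}: the existence of minimal semi-free resolutions over a cochain DG algebra, and the fact that the semibasis of such a resolution of $M$ lies in degrees $\geq \inf M$ with its bottom graded piece nonzero. Granted that, the rest is a short degree count in which $R^{<0} = 0$ pins down the shape of $P \otimes_R F$ and $R^1 = 0$ turns the bottom semibasis elements of $P$ and $F$ into cycles. The main thing to be careful about is keeping the two module structures ($R$ versus $R^{\opp}$) straight and verifying that minimality really forces those bottom generators to be cycles.
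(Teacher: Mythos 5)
Your proof is correct, and for the harder half it takes a genuinely different route from the paper. The inequality $\inf(M \LTensor_R N) \geq \inf M + \inf N$ is obtained in essentially the same way in both arguments (a degree count against the shape of a semi-free resolution; the paper truncates $M$ via Lemma \ref{lem:truncations}(1) rather than resolving it, but that is cosmetic). The difference lies in proving $\Homology^{s+t}(M \LTensor_R N) \neq 0$. The paper constructs a morphism $\Sigma^{-t}R^{(\beta)} \rightarrow N$ inducing an isomorphism on $\Homology^t$, completes it to a distinguished triangle whose third term $N''$ has $\inf N'' \geq t+1$, tensors with $M$, and extracts the nonvanishing from the long exact cohomology sequence together with the already-proved inequality applied to $(M,N'')$. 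You instead resolve both factors minimally and observe that minimality together with $R^1 = 0$ forces the bottom semibasis elements of $P$ and $F$ to be cycles, so that $(P \otimes_R F)^{s+t}$ consists entirely of cycles while $(P \otimes_R F)^{s+t-1} = 0$; this identifies $\Homology^{s+t}$ with the visibly nonzero space $(P \otimes_R F)^{s+t}$. Your computation is more elementary and makes transparent exactly where the hypotheses $R^{<0}=0$, $R^0=k$, $R^1=0$ enter; the paper's triangle argument buys a proof that needs only a resolution of one factor and a truncation of the other, and its bootstrapping of the easy inequality is a device that recurs elsewhere in the paper (for instance in the proof of Theorem \ref{thm:Gorenstein}). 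Both arguments are complete; the one point that genuinely needs care, and which you handle correctly, is invoking minimality of $P$ as a DG right-$R$-module to kill $d(e)$ for the degree-$s$ generators.
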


\begin{proof}
If $M$ or $N$ is isomorphic to zero then the equation reads $\infty
= \infty$, so let me assume not.  Then $i = \inf M$ and $j = \inf N$
are integers.

Lemma \ref{lem:truncations}(1) says that $M$ can be replaced with a
quasi-isomorphic DG mo\-du\-le which satisfies $M^{\ell} = 0$ for
$\ell < i$.

Lemma \ref{lem:semi-free}(3) says that $N$ has a semi-free resolution
$F$ which satisfies that $F^{\natural} \cong \bigoplus_{\ell \leq -j}
\Sigma^{\ell} (R^{\natural})^{(\beta_{\ell})}$, and it follows that
$(M \otimes_R F)^{\natural} \cong \bigoplus_{\ell \leq -j}
\Sigma^{\ell} (M^{\natural})^{(\beta_{\ell})}$.

Since $M^{\ell} = 0$ for $\ell < i$, this implies that $(M \otimes_R
F)^{\ell} = 0$ for $\ell < i+j$.  In particular, $\inf(M \otimes_R F)
\geq i+j$ whence
\begin{equation}
\label{equ:h}
  \inf(M \LTensor_R N) \geq i+j = \inf M + \inf N.
\end{equation}

Conversely, to give a morphism of DG left-$R$-modules $\Sigma^{-j}R
\rightarrow N$ is the same thing as to give the image $z$ of
$\Sigma^{-j}(1_R)$, and $z$ is a cycle in $N^j$.  Since
$\Homology^j(\Sigma^{-j}R) \cong \Homology^0\!R \cong k$, the induced
map $\Homology^j(\Sigma^{-j}R) \rightarrow \Homology^j\!N$ is just the
map $k \rightarrow \Homology^j\!N$ sending $1_k$ to the cohomology
class of $z$.  Hence, picking cycles $z_{\alpha}$ whose cohomology
classes form a $k$-basis of $\Homology^j\!N$ and constructing a
morphism $\Sigma^{-j}R^{(\beta)} \rightarrow N$ by sending the
elements $\Sigma^{-j}(1_R)$ to the $z_{\alpha}$ gives that the induced
map $\Homology^j(\Sigma^{-j}R^{(\beta)}) \rightarrow \Homology^j\!N$ is
an isomorphism.  Complete to a distinguished triangle
\begin{equation}
\label{equ:i}
  \Sigma^{-j}R^{(\beta)} \rightarrow N \rightarrow N^{\prime\prime} \rightarrow;
\end{equation}
since $\Homology^{j+1}(\Sigma^{-j}R^{(\beta)}) \cong
\Homology^1(R^{(\beta)}) = 0$, the long exact cohomology sequence
shows
\begin{equation}
\label{equ:k}
  \inf N^{\prime\prime} \geq j+1.
\end{equation}

Tensoring the distinguished triangle \eqref{equ:i} with
$M$ gives 
\[
  \Sigma^{-j}M^{(\beta)} \rightarrow M \LTensor_R N
    \rightarrow M \LTensor_R N^{\prime\prime} \rightarrow
\]
and the long exact cohomology sequence of this contains
\begin{equation}
\label{equ:l}
  \Homology^{i+j-1}(M \LTensor_R N^{\prime\prime})
    \rightarrow \Homology^{i+j}(\Sigma^{-j}M^{(\beta)})
    \rightarrow \Homology^{i+j}(M \LTensor_R N).
\end{equation}
The inequality \eqref{equ:h} can be applied to $M$ and $N^{\prime\prime}$;
because of the inequality \eqref{equ:k}, this gives $\inf(M \LTensor_R
N^{\prime\prime}) \geq i+j+1$ so the first term of the exact sequence
\eqref{equ:l} is zero.  The second term is
$\Homology^{i+j}(\Sigma^{-j}M^{(\beta)}) \cong
\Homology^i(M^{(\beta)})$ which is non-zero since $i = \inf M$.  It
follows that the third term is non-zero, so
\[
  \inf(M \LTensor_R N) \leq i+j = \inf M + \inf N.
\]
Combining with the inequality \eqref{equ:h} proves the lemma.
\end{proof}

\begin{definition}
The DG algebra $R$ is said to be Gorenstein if it satisfies the
equivalent conditions of the following theorem.
\end{definition}

In the theorem, recall from Definition \ref{def:dual} that $\dual(-) =
\Hom_k(-,k)$.

\begin{theorem}
\label{thm:Gorenstein}
The following conditions are equivalent.
\begin{enumerate}

  \item  There are isomorphisms of $k$-vector spaces
\[
  \Ext_R^i(k,R) \; \cong \;
  \left\{
    \begin{array}{cl}
      k & \mbox{for $i = d$}, \\
      0 & \mbox{otherwise}
    \end{array}
  \right\} \; \cong \;
  \Ext_{R^{\opp}}^i(k,R).
\]

  \item  There are isomorphisms of graded $\Homology\!R$-modules
\[
  \mbox{${}_{\Homology\!R}(\dual\!\Homology\!R) \cong {}_{\Homology\!R}(\Sigma^d \Homology\!R)$
        \;\;and\;\;
        $(\dual\!\Homology\!R)_{\Homology\!R} \cong (\Sigma^d \Homology\!R)_{\Homology\!R}$.}
\]

  \item  There are isomorphisms
\[
  \mbox{${}_{R}(\dual\!R) \cong {}_{R}(\Sigma^d R)$ \;in\; $\Dcat(R)$
        \;\;and\;\;
        $(\dual\!R)_R \cong (\Sigma^d R)_R$ \;in\; $\Dcat(R^{\opp})$.}
\]

  \item  $\dim_k \Ext_R(k,R) < \infty$ and $\dim_k
    \Ext_{R^{\opp}}(k,R) < \infty$. 

  \item  ${}_{R}(\dual\!R)$ is in $\Dcatc(R)$ and $(\dual\!R)_R$ is in
    $\Dcatc(R^{\opp})$. 

\end{enumerate}
\end{theorem}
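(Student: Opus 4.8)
The plan is to prove $(1)\Rightarrow(3)\Rightarrow(2)\Rightarrow(1)$, together with $(1)\Rightarrow(4)$, $(3)\Rightarrow(5)$ and $(4)\Leftrightarrow(5)$, which links all five conditions. The engine is the functor $\dual(-)=\Hom_k(-,k)$ and the interplay between $\RHom_R(k,R)$ and $\dual R\LTensor_R k$. I would isolate two facts at the outset. First, since $\dim_k R<\infty$, tensor--Hom adjunction gives natural isomorphisms $\RHom_R(k,R)\cong\dual(\dual R\LTensor_R k)$ and $\RHom_{R^{\opp}}(k,R)\cong\dual(k\LTensor_R\dual R)$, with the appropriate left/right reading, and --- using only $R\LTensor_R k\cong k$ --- an unconditional isomorphism $\RHom_R(k,\dual R)\cong k$ and its $R^{\opp}$-counterpart. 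Second, an object $M$ of $\Dcatf(R)$ lies in $\Dcatc(R)$ if and only if $\dim_k(M\LTensor_R k)<\infty$: a minimal semi-free resolution $F$ of $M$ satisfies $F\otimes_R k\cong\Homology(M\LTensor_R k)$ (Lemma \ref{lem:semi-free}), so its generating module $F^{\natural}=\bigoplus_j\Sigma^j(R^{\natural})^{(\beta_j)}$ is a finite direct sum --- whence $F$ is finitely built from $R$ --- exactly when $\sum_j\beta_j=\dim_k(M\LTensor_R k)$ is finite.

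For $(1)\Rightarrow(3)$: condition $(1)$ says $\RHom_R(k,R)\cong\Sigma^{-d}k$, so the first fact gives $\dual R\LTensor_R k\cong\Sigma^d k$, one-dimensional and concentrated in degree $-d$. By the second fact, the minimal semi-free resolution of ${}_R(\dual R)$ has a single free generator, necessarily in degree $-d$; its differential lands in $R^1=0$ and hence vanishes, so the resolution is $\Sigma^d R$, giving ${}_R(\dual R)\cong{}_R(\Sigma^d R)$ in $\Dcat(R)$. The same argument over $R^{\opp}$ handles $(\dual R)_R$. For $(3)\Rightarrow(1)$: apply $\RHom_R(k,-)$ to ${}_R(\dual R)\cong{}_R(\Sigma^d R)$; the left side is $k$ by the unconditional isomorphism above, the right side is $\Sigma^d\RHom_R(k,R)$, whence $\RHom_R(k,R)\cong\Sigma^{-d}k$, which is $(1)$; likewise over $R^{\opp}$. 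For $(3)\Rightarrow(2)$: apply $\Homology$; exactness of $\dual$ gives $\Homology(\dual R)=\dual\!\Homology\!R$, while $\Homology(\Sigma^d R)=\Sigma^d\Homology\!R$, and an isomorphism in $\Dcat(R)$ induces an $\Homology\!R$-linear isomorphism on cohomology.

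For the finiteness conditions: $(1)\Rightarrow(4)$ is immediate; $(3)\Rightarrow(5)$ because $\Sigma^d R$ lies in $\Dcatc(R)$, so ${}_R(\dual R)\cong\Sigma^d R$ does too, and symmetrically; and $(4)\Leftrightarrow(5)$ follows from the two facts above, since they identify $\dim_k(\dual R\LTensor_R k)$ with $\dim_k\Ext_R(k,R)$ and $\dim_k(k\LTensor_R\dual R)$ with $\dim_k\Ext_{R^{\opp}}(k,R)$, while $\dual R$ always lies in $\Dcatf(R)\cap\Dcatf(R^{\opp})$ because $\dual\!\Homology\!R$ is finite dimensional.

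Two implications remain, and this is where the work lies. For $(2)\Rightarrow(1)$ I would use the Eilenberg--Moore spectral sequence $E_2=\Ext_{\Homology\!R}(k,\Homology\!R)\Rightarrow\Ext_R(k,R)$, convergent because $\Homology\!R$ is finite dimensional and $R$ is bounded: condition $(2)$ gives $\Ext_{\Homology\!R}(k,\Homology\!R)\cong\Sigma^{-d}\Ext_{\Homology\!R}(k,\dual\!\Homology\!R)\cong\Sigma^{-d}\dual k$, which is one-dimensional and concentrated in total degree $d$, so the spectral sequence degenerates and $\Ext_R(k,R)$ is one-dimensional, concentrated in degree $d$ (argue the same over $R^{\opp}$). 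The main obstacle is the passage from finiteness back to $(1)$, say $(5)\Rightarrow(1)$ --- equivalently $(4)\Rightarrow(1)$. Lemma \ref{lem:inf} already yields, with no hypothesis, that $\sup\Ext_R(k,R)=d$ and $\Ext_R^d(k,R)\cong\Homology^d\!R$ (the bottom cohomology of $\dual R\LTensor_R k$ being $\dual\!\Homology^d\!R$), so what must be shown is that finiteness forces $\Ext_R^i(k,R)=0$ for $i<d$ and $\dim_k\Homology^d\!R=1$. I would attack this by proving that, under $(4)$, the biduality morphism $k\to\RHom_{R^{\opp}}(\RHom_R(k,R),R)$ is an isomorphism, and then reading off the structure of $\Ext_R(k,R)$ from that of $\dual R$ via the first fact; establishing this reflexivity statement, rather than the surrounding bookkeeping, is the crux of the proof.
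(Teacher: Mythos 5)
Most of your argument is sound and close to the paper's: the cycle $(1)\Rightarrow(3)\Rightarrow(2)\Rightarrow(1)$ uses the same ingredients (minimal semi-free resolutions with a single generator forced into degree $-d$, passage to cohomology, the Eilenberg--Moore spectral sequence), and your treatment of $(4)\Leftrightarrow(5)$ by counting generators of the minimal semi-free resolution of $\dual\!R$ is the paper's argument, modulo the fact that you invoke a tensor analogue of Lemma \ref{lem:semi-free}(5) (that $F \otimes_R k$ has zero differential for minimal $F$) which the paper only states for $\Hom_R(F,k)$, and a left/right slip in writing $M \LTensor_R k$ for a left module $M$. Your direct proof of $(3)\Rightarrow(1)$ via $\RHom_R(k,\dual\!R)\cong k$ is a small, legitimate shortcut the paper does not take.

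The genuine gap is the implication you yourself identify as the crux: $(4)\Rightarrow(1)$ (equivalently $(5)\Rightarrow(1)$). Without it, conditions $(4)$ and $(5)$ are only shown to follow from $(1)$--$(3)$, not to imply them, so the theorem is not proved. You propose to establish that the biduality morphism $k\to\RHom_{R^{\opp}}(\RHom_R(k,R),R)$ is an isomorphism under $(4)$, but you give no argument for this, and it is not a formal consequence of anything set up earlier (note that $k$ is not compact, so the usual d\'{e}vissage in the first variable is unavailable). The paper closes this gap by running the d\'{e}vissage in the \emph{other} variable: since $(4)\Rightarrow(5)$, the module $\dual\!R$ is finitely built from $R$, so the canonical morphism $\RHom_R(k,R)\LTensor_R\dual\!R\to\RHom_R(k,R\LTensor_R\dual\!R)$ is an isomorphism (it clearly is when $\dual\!R$ is replaced by $R$), giving $\RHom_R(k,R)\LTensor_R\dual\!R\cong\RHom_R(k,\dual\!R)\cong k$. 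Lemma \ref{lem:inf} applied to this tensor product (legitimate because $(4)$ puts $\RHom_R(k,R)$ in $\Dcatf(R^{\opp})$) yields $\inf\RHom_R(k,R)=d$, while adjointness plus Lemma \ref{lem:inf} applied to $\dual\!R\LTensor_R k$ yields $\sup\RHom_R(k,R)=d$; hence $\RHom_R(k,R)\cong(\Sigma^{-d}k^{(\beta)})_R$, and substituting back into $\RHom_R(k,R)\LTensor_R\dual\!R\cong k$ forces $\beta=1$. Your reflexivity statement is in fact equivalent to the paper's identity $\RHom_R(k,R)\LTensor_R\dual\!R\cong k$ (apply $\dual$ and tensor--Hom adjunction), so the strategy can be made to work --- but the compactness of $\dual\!R$ is the mechanism that proves it, and that step is what your proposal omits.
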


\begin{proof}
(1)$\Rightarrow$(3).  Let $F$ be a minimal semi-free resolution of
${}_{R}(\dual\!R)$.  Then
\begin{equation}
\label{equ:d}
  \Ext_{R^{\opp}}(k,R)
  \stackrel{\rm (a)}{\cong} \Ext_R(\dual\!R,k)
  = \Homology(\RHom_R(\dual\!R,k))
  \stackrel{\rm (b)}{\cong} \Hom_{R^{\natural}}(F^{\natural},k^{\natural})
\end{equation}
where (a) is by duality and (b) is by Lemma \ref{lem:semi-free}, (2)
and (5).  If the second isomorphism in (1) holds, then this implies
$F^{\natural} \cong \Sigma^d R^{\natural}$.  But then there is clearly
only a single step in the semi-free filtration of $F$, whence $F \cong
{}_{R}(\Sigma^d R)$ so ${}_{R}(\dual\!R) \cong {}_{R}(\Sigma^d R)$,
proving the first isomorphism in (3).  Likewise, the first
isomorphism in (1) implies the second isomorphism in (3).

(3)$\Rightarrow$(2).  This follows by taking cohomology.

(2)$\Rightarrow$(1).  This follows from the Eilenberg-Moore spectral
sequence
\[
  E_2^{pq} = \Ext_{\Homology\!R}^p(k,\Homology\!R)^q
  \Rightarrow \Ext_R^{p+q}(k,R)
\]
which exists by \cite[1.3(2)]{FHTpaper}, and the corresponding
spectral sequence over $R^{\opp}$.

(4)$\Leftrightarrow$(5).  Lemma \ref{lem:semi-free}(3) says that the
semi-free resolution $F$ of ${}_{R}(\dual\!R)$ has $F^{\natural} \cong
\bigoplus_i \Sigma^i(R^{\natural})^{(\beta_i)}$, and Equation
\eqref{equ:d} shows that $\dim_k \Ext_{R^{\opp}}(k,R)$ is the
number of direct summands $\Sigma^i R^{\natural}$.  By Lemma
\ref{lem:semi-free}(4), this number is finite if and only if
${}_{R}(\dual\!R)$ is in $\Dcatc(R)$, so the second condition in (4)
is equivalent to the first condition in (5) and vice versa.

(1)$\Rightarrow$(4) is clear.

(4)$\Rightarrow$(1).  When (4) holds, so does (5) by the previous part
of the proof; hence ${}_{R}(\dual\!R)$ is finitely built from
${}_{R}R$.  Then the canonical morphism
\[
  \RHom_R(k,R) \LTensor_R \dual\!R
  \rightarrow \RHom_R(k,R \LTensor_R \dual\!R)
\]
is an isomorphism, because it clearly is if $\dual\!R$ is replaced
with $R$.  That is,
\begin{equation}
\label{equ:c}
  \RHom_R(k,R) \LTensor_R \dual\!R \cong k.
\end{equation}
Since (4) holds, $\RHom_R(k,R)$ is in $\Dcatf(R^{\opp})$, so Lemma
\ref{lem:inf} applies to the tensor product and gives
\[
  \inf \RHom_R(k,R) + \inf \dual\!R = \inf k = 0
\]
which amounts to
\[
  \inf \RHom_R(k,R) = d.
\]

On the other hand, adjointness gives the first of the next isomorphisms,
\[
  \RHom_k((\dual\!R) \LTensor_R k,k)
  \cong \RHom_R(k,\RHom_k(\dual\!R,k))
  \cong \RHom_R(k,R),
\]
and so
\begin{align*}
  \sup \RHom_R(k,R)
  & = \sup \RHom_k((\dual\!R) \LTensor_R k,k) \\
  & = - \inf((\dual\!R) \LTensor_R k) \\
  & \stackrel{\rm (c)}{=} - \inf \dual\!R - \inf k \\
  & = d
\end{align*}
where (c) is by Lemma \ref{lem:inf} again.  Hence the cohomology of
$\RHom_R(k,R)$ is concentrated in degree $d$, and it is not hard to
show that hence
\[
  \RHom_R(k,R) \cong (\Sigma^{-d}k^{(\beta)})_R
\]
for some $\beta$.  Inserting this into Equation \eqref{equ:c} shows
$\beta = 1$, so
\[
  \RHom_R(k,R) \cong (\Sigma^{-d}k)_R.
\]
This is equivalent to the first isomorphism in (1), and the second one
follows by a symmetric argument.
\end{proof}

\begin{theorem}
\label{thm:k}
If $\dim_k \Homology\!R \geq 2$, then ${}_{R}k$ is not in $\Dcatc(R)$.
\end{theorem}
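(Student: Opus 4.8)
The plan is to argue by contradiction: suppose $\dim_k\Homology\!R\geq 2$ but ${}_Rk$ is in $\Dcatc(R)$. The key players are the two functors $\RHom_R(-,k)$ and $-\LTensor_R k$, relating $\Dcat(R)$ and $\Dcat(R^{\opp})$, together with the ``$\inf$'' bookkeeping provided by Lemma \ref{lem:inf}. The first step is to note that if ${}_Rk\in\Dcatc(R)$, then ${}_Rk$ is finitely built from ${}_RR$, and by dualizing one gets that $k_{R}$ is finitely built from $(\dual\!R)_R$ in $\Dcat(R^{\opp})$; in particular $\dim_k\RHom_R(k,k)$ and various associated invariants are finite. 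More importantly, compactness of ${}_Rk$ makes $\RHom_R(k,-)$ commute with arbitrary coproducts, and it lets one transport tensor-hom canonical morphisms between isomorphisms, exactly as in the proof of (4)$\Rightarrow$(1) in Theorem \ref{thm:Gorenstein}.

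Next I would pin down $\sup\RHom_R(k,k)$ and $\inf\RHom_R(k,k)$. On one hand, since $R^0=k$ and $R^{<0}=0$, a minimal semi-free resolution $F$ of ${}_Rk$ has $F^{\natural}\cong\bigoplus_{j\leq 0}\Sigma^j(R^{\natural})^{(\beta_j)}$ by Lemma \ref{lem:semi-free}(3), with $\beta_0=1$; hence $\Hom_{R^{\natural}}(F^{\natural},k^{\natural})$ is concentrated in nonnegative degrees with a one-dimensional piece in degree $0$, so $\inf\RHom_R(k,k)=0$. On the other hand, adjointness gives
\[
  \RHom_R(k,k)\cong\RHom_R(k,\RHom_k(k,k))\cong\RHom_k(k\LTensor_R k,k),
\]
so $\sup\RHom_R(k,k)=-\inf(k\LTensor_R k)$. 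If ${}_Rk\in\Dcatc(R)$ then $k_R\in\Dcatc(R^{\opp})\subseteq\Dcatf(R^{\opp})$ and Lemma \ref{lem:inf} forces $\inf(k\LTensor_R k)=\inf k+\inf k=0$, whence $\sup\RHom_R(k,k)=0$ as well. Therefore the cohomology of $\RHom_R(k,k)$ would be concentrated in a single degree, namely degree $0$, so $\Ext_R^i(k,k)=0$ for all $i\neq 0$, i.e.\ $\RHom_R(k,k)\cong k$.

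The final step is to derive a contradiction from $\RHom_R(k,k)\cong k$ together with $\dim_k\Homology\!R\geq 2$. The cleanest route is to feed this into an Eilenberg--Moore (or Ext) spectral sequence $\Ext_{\Homology\!R}^{p}(k,k)^q\Rightarrow\Ext_R^{p+q}(k,k)$ of the type already invoked in the proof of Theorem \ref{thm:Gorenstein}; concentration of the abutment in total degree $0$ severely constrains $\Ext_{\Homology\!R}^{*}(k,k)$, and since $\dim_k\Homology\!R\geq 2$ the graded algebra $\Homology\!R$ has a nonzero element of positive degree, which produces a nonzero class in $\Ext_{\Homology\!R}^{1}(k,k)$ that cannot be killed and cannot be cancelled for degree reasons, contradicting $\Ext_R^{*}(k,k)=k$. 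Alternatively, one can argue directly on the minimal semi-free resolution $F$: minimality forces the differential of $F$ to land in $R^{\geq 2}F$, and a nonzero positive-degree cohomology class of $R$ forces $\beta_{-j}\neq 0$ for some $j\geq 1$, so $\Ext_R^{j}(k,k)\neq 0$, again contradicting $\RHom_R(k,k)\cong k$.

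I expect the main obstacle to be the last step: making rigorous that $\dim_k\Homology\!R\geq 2$ genuinely obstructs $\RHom_R(k,k)\cong k$. One must handle the spectral sequence's multiplicative/differential structure carefully (or, in the semi-free approach, verify that minimality of $F$ really does detect a positive-degree generator of $\Homology\!R$ in $F^{\natural}$ below degree $0$), since a priori one only knows $R$ has some cohomology in positive degrees, not that it survives to $\Ext_R(k,k)$ without the concentration hypothesis doing the work. The earlier steps are essentially formal manipulations with $\inf$, $\sup$, adjunction and Lemma \ref{lem:semi-free}, entirely parallel to the proof of Theorem \ref{thm:Gorenstein}.
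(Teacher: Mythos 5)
There is a genuine gap, and it sits in the middle step where you conclude that $\RHom_R(k,k)$ is concentrated in a single degree. First, a sign problem: with the paper's conventions, a minimal semi-free resolution $F$ of ${}_Rk$ has $F^{\natural}\cong\bigoplus_{i\le 0}\Sigma^i(R^{\natural})^{(\beta_i)}$ and $\Hom_{R^{\natural}}(\Sigma^iR^{\natural},k^{\natural})$ is concentrated in degree $i$ (compare the computation in Proposition \ref{pro:DcandDfKrullSchmidt}), so $\Ext_R(k,k)$ lives in \emph{non-positive} degrees; what your first computation actually shows is $\sup\RHom_R(k,k)=0$, not $\inf\RHom_R(k,k)=0$. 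Second, and more seriously, your adjunction step computes the very same quantity: $\RHom_R(k,k)\cong\dual(k\LTensor_Rk)$ together with $\inf(k\LTensor_Rk)=0$ from Lemma \ref{lem:inf} (which needs only $\Dcatf$, not compactness, so this uses nothing about the hypothesis) again yields $\sup\RHom_R(k,k)=0$. The two bounds are not complementary, so concentration in one degree does not follow. Indeed it is false: for $R=k[x]/(x^2)$ with $x$ in degree $d\ge 2$ one has $\Ext_R(k,k)\cong k[y]$ with $y$ in degree $-(d-1)$ (this is exactly the Koszul dual $S$ appearing in the proof of Theorem \ref{thm:spheres}), which is spread over all non-positive degrees. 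What compactness of ${}_Rk$ really gives, via Lemma \ref{lem:semi-free}(4), is only that $\beta_i=0$ for $i\ll 0$, i.e.\ $\dim_k\Ext_R(k,k)<\infty$; the actual content of the theorem is that $\Ext_R(k,k)$ is \emph{infinite}-dimensional whenever $\dim_k\Homology\!R\ge 2$, a nontrivial fact (rationally: the loop space homology of a non-contractible finite complex is infinite-dimensional) which your final paragraph gestures at but does not establish, and which your own caveat correctly identifies as the hard point.

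For comparison, the paper's proof takes a different and much shorter route: it invokes the amplitude inequality $\amp(M\LTensor_RN)\ge\amp M$ for $M\in\Dcatf(R^{\opp})$ and $N\in\Dcatc(R)$ (quoted from Schmidt's thesis and from Frankild--J\o rgensen rather than proved in the paper), applied with $M=R$ and $N={}_Rk$: compactness of $k$ would force $0=\amp(R\LTensor_Rk)\ge\amp R>0$. If you want a self-contained argument along your lines, the viable endgame is to show directly that the minimal semi-free resolution of ${}_Rk$ requires infinitely many free summands when $\Homology\!R\ne k$ (equivalently, that $\Homology(k\LTensor_Rk)=\bigoplus_i\Sigma^ik^{(\beta_i)}$ is infinite-dimensional), which is essentially the amplitude inequality in disguise and still needs a real proof.
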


\begin{proof}
Recall from Definition \ref{def:sup_and_inf} the notion of amplitude
of a DG module.  There is an amplitude inequality $\amp(M \LTensor_R
N) \geq \amp M$ for $M$ in $\Dcatf(R^{\opp})$ and $N$ in $\Dcatc(R)$.
This was first stated in \cite[prop.\ 3.11]{Schmidt}; see
\cite[cor.\ 4.4]{FJcochain} for an alternative proof.

If ${}_{R}k$ were in $\Dcatc(R)$, then this would give $\amp(R
\LTensor_R k) \geq \amp R$, that is, $0 \geq \amp R$ contradicting
$\dim_k \Homology\!R \geq 2$ whereby $R$ must (also) have cohomology in
a degree different from $0$.
\end{proof}

\section{Auslander-Reiten triangles over Differential \\ Gra\-ded algebras}
\label{sec:AR}

In this section, it is proved that the compact derived category
$\Dcatc(R)$ has Auslander-Reiten (AR) triangles if and only if $R$ is
a Gorenstein DG algebra.  In this case, a formula is found for the AR
translation of $\Dcatc(R)$.  These results first appeared in
\cite{artop}.

For background on AR theory, see Appendix \ref{app:AR}.

In the following proposition, note that $\dual\!R \LTensor_R P$
inherits a left-$R$-structure from the DG bi-$R$-module $\dual\!R$ so
$\dual\!R \LTensor_R P$ is in $\Dcat(R)$; see Definition
\ref{def:Hom_and_Tensor}.

\begin{proposition}
\label{pro:AR}
Let $P$ be an indecomposable object of $\Dcatc(R)$.  There is an
AR triangle in $\Dcatf(R)$,
\[
  \Sigma^{-1}(\dual\!R \LTensor_R P) \rightarrow N \rightarrow P \rightarrow.
\]
\end{proposition}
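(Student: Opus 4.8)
The plan is to exhibit the required AR triangle by constructing an appropriate ``Serre-type'' duality and then invoking the standard criterion for the existence of AR triangles in terms of a Serre functor on the relevant subcategory. First I would observe that since $R$ is assumed to satisfy Setup \ref{set:blanket} with $\dim_k R < \infty$, the categories $\Dcatc(R)$ and $\Dcatf(R)$ are Krull-Schmidt with finite-dimensional Hom spaces by Proposition \ref{pro:DcandDfKrullSchmidt}; this is precisely the setting in which AR triangles are detected by nonzero morphisms into $P$ whose kernel (in the sense of factoring through) is governed by a dual space. Concretely, for $P$ indecomposable in $\Dcatc(R)$ I want a natural isomorphism
\[
  \dual\Hom_{\Dcat(R)}(N,P) \cong \Hom_{\Dcat(R)}(P,\Sigma N)
\]
for $N$ ranging over $\Dcatf(R)$ (or at least enough $N$), where the left-hand term is built from $\Sigma^{-1}(\dual\!R \LTensor_R P)$. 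The AR triangle is then the one classified by the socle element of $\dual\End(P)$ under this duality.

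The key computation is an adjunction/duality identity. Since $P$ is compact, it is finitely built from ${}_RR$, so the canonical morphism
\[
  \RHom_R(P,R) \LTensor_R \dual\!R \longrightarrow \RHom_R(P, R \LTensor_R \dual\!R) = \RHom_R(P,\dual\!R)
\]
is an isomorphism (it is so for $P = R$, hence for everything finitely built from $R$). Combining this with the standard adjunction $\RHom_R(P,\dual\!R) = \RHom_R(P,\RHom_k(R,k)) \cong \RHom_k(P \LTensor_R R, k) \cong \dual P$, and with the identification of $\RHom_R(P,R) \LTensor_R \dual\!R$ with $\dual\!R \LTensor_R P$ (using that $\RHom_R(P,R)$ is the ``dual'' compact right module and $P$ compact so $\RHom_R(P,R)\LTensor_R\dual R\cong \dual R\LTensor_R\RHom_R(R,P)\cong\dual R\LTensor_R P$ after the canonical isomorphism $\RHom_R(P,R)\LTensor_R M\cong M\LTensor_R\dots$ — this needs care), I get
\[
  \Hom_{\Dcat(R)}(P, \Sigma N) \cong \Hom_{\Dcat(R)}(\dual\!R \LTensor_R P, \Sigma N) \quad\text{becomes}\quad \dual\Hom_{\Dcat(R)}(N, P),
\]
the last step by $k$-duality of the adjunction $\RHom_R(M,N)$ vs.\ $\RHom_R(N, \dots)$ applied with $M = \dual\!R\LTensor_R P$. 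Shifting by $\Sigma^{-1}$ places $\Sigma^{-1}(\dual\!R \LTensor_R P)$ in the correct slot and identifies the functor $N \mapsto \dual\Hom(N,P)$ as corepresented by it.

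Having the duality, I would finish by the usual argument: the identity of $\End_{\Dcat(R)}(P)$, which is a finite-dimensional local $k$-algebra since $P$ is indecomposable, has a ``socle'' giving a nonzero element of $\dual\End(P)$; transport it through the duality isomorphism to a morphism $P \to P$ in degree $0$, i.e.\ to an element of $\Hom(P, \Sigma \cdot \Sigma^{-1}(\dual\!R\LTensor_R P))$ — more precisely to the class in $\Hom(\Sigma^{-1}(\dual\!R\LTensor_R P), \Sigma^{-1}(\dual\!R\LTensor_R P))[1]$ classifying a triangle $\Sigma^{-1}(\dual\!R \LTensor_R P) \to N \to P \to$; then check this triangle is AR, which reduces to showing the connecting map $P \to \dual\!R\LTensor_R P$ is right almost split, and that follows formally from the corepresentability and the fact that the chosen functional vanishes on $\rad\End(P)$. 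I also need $N \in \Dcatf(R)$: since $P \in \Dcatc(R) \subseteq \Dcatf(R)$ by Proposition \ref{pro:DcinDf} and $\dual\!R \LTensor_R P$ has finite-dimensional homology (as $\dim_k\dual\!R = \dim_k R < \infty$ and $P\in\Dcatf(R)$), the third vertex $N$ of the triangle lies in $\Dcatf(R)$ by the long exact homology sequence. The main obstacle I anticipate is pinning down the chain of canonical isomorphisms identifying $\RHom_R(P,R)\LTensor_R\dual\!R$ with $\dual\!R\LTensor_R P$ with correct $R$-module structures and naturality in $P$ — the bookkeeping of left/right/bi-module structures on $\dual\!R$ and the verification that everything is natural enough to transport the socle functional — rather than any single hard idea; the triangulated-category mechanics of AR triangles from a Serre-type duality are then standard.
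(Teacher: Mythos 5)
Your proposal follows essentially the same route as the paper: establish a Serre-type duality for the compact object $P$ by reducing to the case $P={}_RR$ via the ``finitely built'' d\'evissage, deduce the existence of the AR triangle in $\Dcat(R)$ (the paper outsources this step to \cite[prop.\ 4.2]{KrauseARZ} rather than redoing the socle-functional construction you sketch), and then check that all three terms lie in $\Dcatf(R)$ exactly as you do. One substantive caveat: the duality you display, $\dual\Hom_{\Dcat(R)}(N,P)\cong\Hom_{\Dcat(R)}(P,\Sigma N)$, is not the right statement --- taken literally it would make the category $1$-Calabi-Yau, and the later claim that $N\mapsto\dual\Hom(N,P)$ is corepresented by $\Sigma^{-1}(\dual\!R\LTensor_R P)$ is likewise off (Serre duality corepresents that functor by $S^{-1}P$, not by $\Sigma^{-1}SP$). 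What is actually needed, and what the paper uses, is the natural equivalence $\dual(\Hom_{\Dcat(R)}(P,-))\simeq\Hom_{\Dcat(R)}(-,\dual\!R\LTensor_R P)$, with $P$ in the \emph{first} slot on the left; the element of $\Hom(P,\dual\!R\LTensor_R P)\cong\dual\End(P)$ corresponding to a functional killing the radical then classifies the triangle. Getting the variance right also dissolves the bookkeeping you worry about: one writes down a natural transformation $\dual\RHom_R(P,-)\to\RHom_R(-,\dual\!R\LTensor_R P)$ directly, notes that for $P=R$ both sides reduce to $\dual(-)$ via the adjunction $\RHom_R(-,\dual\!R)\cong\dual(R\LTensor_R-)$, and propagates the isomorphism over the triangles, (de)suspensions and summands building $P$ from $R$, so the awkward identification of $\RHom_R(P,R)\LTensor_R\dual\!R$ with $\dual\!R\LTensor_R P$ never has to be made explicit. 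With that correction, the remainder of your outline (socle element, completion to a triangle, almost-splitness from naturality, long exact cohomology sequence for membership of $N$ in $\Dcatf(R)$) is the standard argument and matches the paper.
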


\begin{proof}
Since $P$ is finitely built from ${}_{R}R$, there is a natural
equivalence
\begin{equation}
\label{equ:Serre}
  \dual(\Hom_{\Dcat(R)}(P,-)) \simeq \Hom_{\Dcat(R)}(-,\dual\!R \LTensor_R P),
\end{equation}
since there is clearly such an equivalence if $P$ is replaced with
${}_{R}R$.  By \cite[prop.\ 4.2]{KrauseARZ}, this means that the AR
triangle of the present proposition exists in $\Dcat(R)$.

To complete the proof, observe that the triangle is in fact in
$\Dcatf(R)$: The object $P$ is in $\Dcatc(R)$, so it is in $\Dcatf(R)$
by Proposition \ref{pro:DcinDf}.  Since $R_R$ is in
$\Dcatf(R^{\opp})$, the dual ${}_{R}(\dual\!R)$ is in $\Dcatf(R)$, and
since $P$ is finitely built from $R$, it follows that $\dual\!R
\LTensor_R P$ is also in $\Dcatf(R)$.  Finally, $N$ is in $\Dcatf(R)$
by the long exact cohomology sequence.
\end{proof}

\begin{proposition}
\label{pro:AR_triangles_preserved}
An AR triangle in $\Dcatc(R)$ is also an AR triangle in $\Dcatf(R)$.
\end{proposition}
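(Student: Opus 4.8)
The plan is to identify the given AR triangle in $\Dcatc(R)$ with the AR triangle of $\Dcatf(R)$ produced by Proposition \ref{pro:AR}. So let $\Delta\colon X\xrightarrow{u}Y\xrightarrow{v}Z\xrightarrow{w}\Sigma X$ be an AR triangle in $\Dcatc(R)$. Then $X$ and $Z$ are indecomposable in $\Dcatc(R)$, and they remain indecomposable in $\Dcatf(R)$: a direct summand of a compact object is again compact, so any non-trivial decomposition of $X$ or $Z$ in $\Dcatf(R)$ would already occur in $\Dcatc(R)$; for the same reason $\Delta$ stays non-split. Since $Z$ is a compact indecomposable, Proposition \ref{pro:AR} furnishes an AR triangle
\[
  \nabla\colon\quad\Sigma^{-1}(\dual\!R\LTensor_R Z)\xrightarrow{u''}Y''\xrightarrow{v''}Z\xrightarrow{w''}\dual\!R\LTensor_R Z
\]
in $\Dcatf(R)$. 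If all three terms of $\nabla$ lie in $\Dcatc(R)$, then $v''$, being right almost split in $\Dcatf(R)$, is also right almost split in the full subcategory $\Dcatc(R)$ — a map from an object of $\Dcatc(R)$ that factors through $v''$ does so via the compact object $Y''$ — and likewise $u''$ on the left; since $\Sigma^{-1}(\dual\!R\LTensor_R Z)$ and $Z$ are indecomposable and $w''\neq 0$, this makes $\nabla$ an AR triangle in $\Dcatc(R)$ ending at $Z$. As AR triangles ending at a fixed object are unique up to isomorphism, it follows that $\nabla\cong\Delta$; hence $\Delta$, being isomorphic to the AR triangle $\nabla$ of $\Dcatf(R)$, is an AR triangle in $\Dcatf(R)$.

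It remains to show that $\dual\!R\LTensor_R Z$ — equivalently, since $Z$ is compact, every term of $\nabla$ — is compact. The proof of Proposition \ref{pro:AR} provides, for compact $P$, a natural equivalence $\dual\Hom_{\Dcat(R)}(P,-)\simeq\Hom_{\Dcat(R)}(-,\dual\!R\LTensor_R P)$ of functors on $\Dcat(R)$; I apply it with $P=Z$. On the other hand, because $\Delta$ is an AR triangle in $\Dcatc(R)$ ending at $Z$, reading \cite[prop.\ 4.2]{KrauseARZ} in the reverse direction gives a natural isomorphism $\dual\Hom_{\Dcatc(R)}(Z,-)\simeq\Hom_{\Dcatc(R)}(-,\Sigma X)$, with the third term $\Sigma X$ lying in $\Dcatc(R)$. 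Comparing the two, $\Hom_{\Dcat(R)}(-,\dual\!R\LTensor_R Z)$ and $\Hom_{\Dcat(R)}(-,\Sigma X)$ become naturally isomorphic after restriction to $\Dcatc(R)$; evaluating this isomorphism at $\id_{\Sigma X}$ — which is legitimate because $\Sigma X$ is itself compact — exhibits it as $\Hom_{\Dcat(R)}(-,f)$ for a morphism $f\colon\Sigma X\to\dual\!R\LTensor_R Z$ of $\Dcat(R)$, so that $\Hom_{\Dcat(R)}(C,f)$ is an isomorphism for every compact $C$. Since $\Dcat(R)$ is compactly generated, the cone of $f$ receives no nonzero morphism from a compact object and is therefore zero; thus $f$ is an isomorphism and $\dual\!R\LTensor_R Z\cong\Sigma X$ is compact, as required.

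I expect this last step to be the main obstacle: extracting the compactness of $\dual\!R\LTensor_R Z$ from the bare existence of an AR triangle in $\Dcatc(R)$, using the Serre-type duality of Proposition \ref{pro:AR} together with compact generation of $\Dcat(R)$. Everything on either side of it is formal: the reduction to the triangle $\nabla$, the transfer of the almost split properties to $\Dcatc(R)$, and the uniqueness of AR triangles. (In essence this step is the assertion that a DG algebra whose compact derived category has AR triangles must be Gorenstein; granting that, $\dual\!R$ and hence $\dual\!R\LTensor_R Z$ is compact at once by Theorem \ref{thm:Gorenstein}(5), and the whole argument becomes routine.)
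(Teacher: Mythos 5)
Your proof is correct, but it takes a genuinely different route from the paper's. The paper disposes of the statement in two lines: by \cite[lem.\ 4.3]{KrauseARZ} every object of $\Dcatc(R)$ is pure injective in $\Dcat(R)$, and by \cite[prop.\ 3.2]{KrauseARZ} this already forces an AR triangle in $\Dcatc(R)$ to be an AR triangle in all of $\Dcat(R)$, hence in particular in $\Dcatf(R)$. You instead identify the given triangle $\Delta$ with the explicit AR triangle $\nabla$ of Proposition \ref{pro:AR}, which requires the extra step of proving that $\dual\!R\LTensor_R Z\cong\Sigma X$ is compact; your argument for that step (comparing the two objects representing $\dual\Hom(Z,-)$ on $\Dcatc(R)$ and invoking compact generation of $\Dcat(R)$) is sound, and the surrounding reductions (transfer of the almost split properties to the full subcategory, uniqueness of AR triangles with fixed end term) are fine. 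One caveat on sourcing: the natural isomorphism $\dual\Hom_{\Dcatc(R)}(Z,-)\simeq\Hom_{\Dcatc(R)}(-,\Sigma X)$ does not come from reading \cite[prop.\ 4.2]{KrauseARZ} backwards --- that proposition concerns AR triangles in the ambient compactly generated category, whereas $\Delta$ is a priori an AR triangle only in the subcategory $\Dcatc(R)$. What you actually need is the pointwise form of the Reiten--Van den Bergh correspondence between AR triangles and Serre duality (the mechanism behind Theorem \ref{thm:Serre}), which applies because $\Dcatc(R)$ is Hom-finite and Krull--Schmidt by Proposition \ref{pro:DcandDfKrullSchmidt}; with that reference corrected the argument is complete. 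Your route is longer but yields a genuine by-product: the existence of a single AR triangle in $\Dcatc(R)$ ending at $Z$ already forces $\dual\!R\LTensor_R Z$ to be compact, which is essentially the first half of Proposition \ref{pro:AR2}(1); the price is that you re-prove that fact here instead of quoting the pure-injectivity shortcut.
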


\begin{proof}
By \cite[lem.\ 4.3]{KrauseARZ}, each object in $\Dcatc(R)$ is a pure
injective object of $\Dcat(R)$.  Hence by \cite[prop.\
3.2]{KrauseARZ}, each AR triangle in $\Dcatc(R)$ is an AR triangle in
$\Dcat(R)$, and in particular in $\Dcatf(R)$.
\end{proof}

\begin{proposition}
\label{pro:AR2}
\begin{enumerate}

  \item  $\Dcatc(R)$ has right AR triangles if and only if
    ${}_{R}(\dual\!R)$ is in $\Dcatc(R)$. 

  \item  $\Dcatc(R)$ has left AR triangles if and only if $(\dual\!R)_R$
    is in $\Dcatc(R^{\opp})$.

\end{enumerate}
\end{proposition}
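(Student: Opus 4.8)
The plan is to deduce both statements from the structural results already in hand, chiefly Proposition \ref{pro:AR} together with the fact that $\Dcatc(R)$ is Krull-Schmidt (Proposition \ref{pro:DcandDfKrullSchmidt}). I treat part (1); part (2) follows by the symmetric argument, replacing $R$ with $R^{\opp}$ throughout. Recall that a category has right AR triangles if for every indecomposable object $P$ there is an AR triangle $\Sigma^{-1} Z \rightarrow N \rightarrow P \rightarrow$ ending in $P$.

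First I would prove the ``if'' direction. Assume ${}_{R}(\dual\!R)$ is in $\Dcatc(R)$, and let $P$ be an indecomposable object of $\Dcatc(R)$. Since $P$ is finitely built from ${}_{R}R$, the object $\dual\!R \LTensor_R P$ is finitely built from ${}_{R}(\dual\!R)$, hence lies in $\Dcatc(R)$ by the assumption. Proposition \ref{pro:AR} supplies an AR triangle $\Sigma^{-1}(\dual\!R \LTensor_R P) \rightarrow N \rightarrow P \rightarrow$ in $\Dcatf(R)$; since its outer two terms $P$ and $\Sigma^{-1}(\dual\!R \LTensor_R P)$ now both lie in the full subcategory $\Dcatc(R)$, so does $N$, and an AR triangle in $\Dcatf(R)$ all of whose terms lie in $\Dcatc(R)$ is automatically an AR triangle in $\Dcatc(R)$ (the defining lifting property is checked against morphisms from objects of $\Dcatc(R)$, which is a full subcategory). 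Thus $\Dcatc(R)$ has right AR triangles.

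For the ``only if'' direction I would argue by testing the definition on a distinguished object: suppose $\Dcatc(R)$ has right AR triangles. Decompose ${}_{R}R = \bigoplus_i P_i$ into indecomposables in $\Dcatc(R)$ (using that it is Krull-Schmidt), and for each indecomposable summand $P_i$ let $\Sigma^{-1} Z_i \rightarrow N_i \rightarrow P_i \rightarrow$ be an AR triangle in $\Dcatc(R)$. By Proposition \ref{pro:AR_triangles_preserved} this is also an AR triangle in $\Dcatf(R)$, and by Proposition \ref{pro:AR} so is $\Sigma^{-1}(\dual\!R \LTensor_R P_i) \rightarrow N_i' \rightarrow P_i \rightarrow$; since AR triangles ending in a given object are unique up to isomorphism, $Z_i \cong \dual\!R \LTensor_R P_i$ in $\Dcatf(R)$, and in particular $\dual\!R \LTensor_R P_i$ lies in $\Dcatc(R)$. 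Taking the direct sum over $i$ gives that $\dual\!R \LTensor_R R \cong {}_{R}(\dual\!R)$ lies in $\Dcatc(R)$, as desired.

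The main obstacle I anticipate is the bookkeeping in the ``only if'' direction: one must be careful that the AR triangle ending in $P_i$ produced by the hypothesis really is comparable to the one from Proposition \ref{pro:AR} — this rests on the uniqueness of AR triangles, which requires knowing they live in the same ambient category (here $\Dcatf(R)$, via Proposition \ref{pro:AR_triangles_preserved}), and on $\Sigma^{-1}$ being an equivalence so that passing between $Z$ and $\Sigma^{-1}Z$ is harmless. The ``if'' direction is routine once one observes that $\dual\!R \LTensor_R P$ is finitely built from ${}_{R}(\dual\!R)$.
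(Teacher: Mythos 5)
Your part (1) is correct and follows essentially the same route as the paper: the ``if'' direction is the paper's argument verbatim (outer terms of the triangle from Proposition \ref{pro:AR} land in $\Dcatc(R)$, hence so does the middle, and fullness of the subcategory makes it an AR triangle there), and your ``only if'' direction differs only cosmetically --- you decompose ${}_{R}R$ into indecomposables, whereas the paper simply observes that $\End_{\Dcat(R)}({}_{R}R) \cong \Homology^0\!R = k$ is local, so ${}_{R}R$ is already indecomposable and one comparison of AR triangles in $\Dcatf(R)$ suffices. Both comparisons rest, as you say, on the uniqueness of end terms of AR triangles and on Proposition \ref{pro:AR_triangles_preserved}.

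The gap is in your one-line dismissal of part (2). Replacing $R$ by $R^{\opp}$ in part (1) yields: $\Dcatc(R^{\opp})$ has \emph{right} AR triangles if and only if $(\dual\!R)_R$ is in $\Dcatc(R^{\opp})$. That is not the statement of part (2), which concerns \emph{left} AR triangles in $\Dcatc(R)$, i.e.\ AR triangles with a prescribed \emph{first} term. There is no ``symmetric'' version of Proposition \ref{pro:AR} producing a triangle starting at a given object, so the symmetry you invoke does not reach left AR triangles at all. The missing ingredient is the contravariant equivalence $\RHom_R(-,R) \colon \Dcatc(R) \to \Dcatc(R^{\opp})$ (with quasi-inverse $\RHom_{R^{\opp}}(-,R)$), which reverses arrows and hence exchanges left AR triangles in $\Dcatc(R)$ with right AR triangles in $\Dcatc(R^{\opp})$; only after this translation does the right-module version of part (1) apply. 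You should state and use this duality explicitly.
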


\begin{proof}
(1).  Suppose that $\Dcatc(R)$ has right AR triangles.  The object
${}_{R}R$ of $\Dcatc(R)$ has endomorphism ring $k$ which is local, so
there is an AR triangle $M \rightarrow N \rightarrow {}_{R}R
\rightarrow$ in $\Dcatc(R)$.  By Proposition
\ref{pro:AR_triangles_preserved}, it is even an AR triangle in
$\Dcatf(R)$.  On the other hand, Proposition \ref{pro:AR} gives that
there is also an AR triangle $\Sigma^{-1}({}_{R}(\dual\!R))
\rightarrow N^{\prime} \rightarrow {}_{R}R \rightarrow$ in
$\Dcatf(R)$, and since the right hand terms of the two AR triangles
are isomorphic, so are the left hand terms, $M \cong
\Sigma^{-1}({}_{R}(\dual\!R))$.  But $M$ is in $\Dcatc(R)$, so it
follows that $\Sigma^{-1}({}_{R}(\dual\!R))$ and hence
${}_{R}(\dual\!R)$ is in $\Dcatc(R)$.

Conversely, suppose that ${}_{R}(\dual\!R)$ is in $\Dcatc(R)$.  Given
$P$ in $\Dcatc(R)$, Proposition \ref{pro:AR} gives an AR triangle
\[
  \Sigma^{-1}(\dual\!R \LTensor_R P)
  \rightarrow N
  \rightarrow P
  \rightarrow
\]
in $\Dcatf(R)$.  Since ${}_{R}(\dual\!R)$ is in $\Dcatc(R)$, it is
finitely built from ${}_{R}R$.  The same is true for $P$, and so
$\dual\!R \LTensor_R P$ is also finitely built from ${}_{R}R$, that
is, it is in $\Dcatc(R)$.  It follows that both outer terms of the AR
triangle are in $\Dcatc(R)$, and then so is $N$.  That is, the AR
triangle is in $\Dcatc(R)$, so it is an AR triangle in that category.

(2).  The functors $\RHom_R(-,R)$ and $\RHom_{R^{\opp}}(-,R)$ are
quasi-inverse dualities between $\Dcatc(R)$ and $\Dcatc(R^{\opp})$,
so $\Dcatc(R)$ has left AR triangles if and only if
$\Dcatc(R^{\opp})$ has right AR triangles.  By the right module
version of part (1), this happens if and only if $(\dual\!R)_R$ is in
$\Dcatc(R^{\opp})$.
\end{proof}

\begin{theorem}
\label{thm:R}
The following conditions are equivalent.
\begin{enumerate}

  \item  $\Dcatc(R)$ has AR triangles.

  \item  $\Dcatc(R^{\opp})$ has AR triangles.

  \item  $R$ is Gorenstein.

\end{enumerate}
\end{theorem}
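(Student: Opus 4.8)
The plan is to prove the cycle of implications by leveraging the results already assembled. The key observation is that Theorem~\ref{thm:Gorenstein} gives us a clean characterization of Gorensteinness in condition (5): $R$ is Gorenstein if and only if ${}_{R}(\dual\!R)$ is in $\Dcatc(R)$ \emph{and} $(\dual\!R)_R$ is in $\Dcatc(R^{\opp})$. Meanwhile, Proposition~\ref{pro:AR2} splits the existence of AR triangles in $\Dcatc(R)$ into the existence of right AR triangles (controlled by ${}_{R}(\dual\!R) \in \Dcatc(R)$) and left AR triangles (controlled by $(\dual\!R)_R \in \Dcatc(R^{\opp})$). So the bridge between conditions (1) and (3) is essentially already built; the proof just has to assemble the pieces and note the symmetry.

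First I would prove (1)$\Leftrightarrow$(3). Recall that a triangulated category has AR triangles precisely when it has both left AR triangles and right AR triangles. So by Proposition~\ref{pro:AR2}, $\Dcatc(R)$ has AR triangles if and only if ${}_{R}(\dual\!R)$ is in $\Dcatc(R)$ and $(\dual\!R)_R$ is in $\Dcatc(R^{\opp})$. But these two conditions together are exactly condition~(5) of Theorem~\ref{thm:Gorenstein}, which is equivalent to $R$ being Gorenstein. This establishes (1)$\Leftrightarrow$(3).

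Next I would prove (2)$\Leftrightarrow$(3). The cleanest route is to observe that the Gorenstein condition as phrased in Theorem~\ref{thm:Gorenstein}, say via condition~(1) giving isomorphisms $\Ext_R^i(k,R) \cong \Ext_{R^{\opp}}^i(k,R)$, is visibly symmetric in $R$ and $R^{\opp}$: applying the theorem to $R^{\opp}$ in place of $R$ yields the same pair of conditions. Hence $R$ is Gorenstein if and only if $R^{\opp}$ is Gorenstein. Now apply the already-proved equivalence (1)$\Leftrightarrow$(3) with $R$ replaced by $R^{\opp}$: this gives that $\Dcatc(R^{\opp})$ has AR triangles if and only if $R^{\opp}$ is Gorenstein, which we have just seen is equivalent to $R$ being Gorenstein. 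Chaining these gives (2)$\Leftrightarrow$(3), and the three conditions are all equivalent.

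I do not expect a genuine obstacle here, since all the substantive work — the spectral sequence argument, the semi-free resolution bookkeeping, the duality $\RHom_R(-,R) \dashv \RHom_{R^{\opp}}(-,R)$ between the compact categories, and the construction of AR triangles via the Serre-functor-type equivalence \eqref{equ:Serre} — has been done in Theorem~\ref{thm:Gorenstein} and Proposition~\ref{pro:AR2}. The only point requiring a moment's care is the symmetry claim that $R$ Gorenstein $\Leftrightarrow$ $R^{\opp}$ Gorenstein: one should make sure the chosen characterizing condition from Theorem~\ref{thm:Gorenstein} is manifestly self-dual, and condition~(1) (or equally (4)) is the natural choice since it already mentions both $R$ and $R^{\opp}$ symmetrically. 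With that noted, the proof is a short assembly argument.
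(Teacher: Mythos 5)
Your proof is correct and takes essentially the same route as the paper: (1)$\Leftrightarrow$(3) comes from combining Proposition~\ref{pro:AR2} with Theorem~\ref{thm:Gorenstein}(5), and (2)$\Leftrightarrow$(3) from the $R \leftrightarrow R^{\opp}$ symmetry. The paper phrases the latter step as invoking the right-module version of Proposition~\ref{pro:AR2}, which, since condition (5) of Theorem~\ref{thm:Gorenstein} is manifestly symmetric in $R$ and $R^{\opp}$, is the same observation you make via condition (1).
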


\begin{proof}
By Theorem \ref{thm:Gorenstein}(5), condition (3) is equivalent to
having that ${}_{R}(\dual\!R)$ is in $\Dcatc(R)$ and $(\dual\!R)_R$ is
in $\Dcatc(R^{\opp})$.  This is equivalent to condition (1) by
Proposition \ref{pro:AR2}, and it is equivalent to condition (2) by
the right module version of Proposition \ref{pro:AR2}.
\end{proof}

\begin{remark}
\label{rmk:tau}
Assume the situation of Theorem \ref{thm:R}.

Since $\Dcatc(R)$ has AR triangles, \cite[thm.\ 4.4]{KrauseARZ} and
Equation \eqref{equ:Serre} imply that
\begin{equation}
\label{equ:Serre2}
  S(-) = \dual\!R \LTensor_R -
\end{equation}
is a Serre functor of $\Dcatc(R)$, cf.\ Definition \ref{def:Serre}.
So the AR translation $\tau$ of $\Dcatc(R)$ extends to the
autoequivalence
\begin{equation}
\label{equ:m}
  \Sigma^{-1}(\dual\!R \LTensor_R -)
\end{equation}
of $\Dcatc(R)$, cf.\ Theorem \ref{thm:Serre}.  A quasi-inverse
equivalence is 
\[
  \Sigma \RHom_{R^{\opp}}(\dual\!R,R) \LTensor_R -;
\]
these two expressions can also be viewed as quasi-inverse
autoequivalences of $\Dcat(R)$.

If $X$ is an indecomposable object of $\Dcatc(R)$ then there are AR
triangles in $\Dcatc(R)$,
\[
  \Sigma^{-1}(\dual\!R \LTensor_R X) \rightarrow Y \rightarrow X \rightarrow
\]
and
\[
  X 
  \rightarrow Y^{\prime}
  \rightarrow \Sigma \RHom_{R^{\opp}}(\dual\!R,R) \LTensor_R X
  \rightarrow.
\]

Combining Equation \eqref{equ:m} with Theorem \ref{thm:Gorenstein}(3)
which says $(\dual\!R)_R \cong (\Sigma^d R)_R$ gives
\begin{equation}
\label{equ:j}
  \Homology(\tau(-)) \cong \Homology(\Sigma^{d-1}(-))
\end{equation}
as graded $k$-vector spaces.
\end{remark}

\section{The Auslander-Reiten quiver of a Differential Graded algebra:
  Local structure}
\label{sec:local}

This section considers the AR quiver $\Gamma$ of the compact derived
category $\Dcatc(R)$.  When $R$ is Gorenstein with $\dim_k
\Homology\!R \geq 2$, it is proved that each component of $\Gamma$ is
isomorphic to $\BZ A_{\infty}$ as a translation quiver.  The results
first appeared in \cite{arquiv}; the methods of Karsten Schmidt
\cite{Schmidt} have permitted some technical assumptions to be
removed.

\begin{setup}
\label{set:local}
In this section, $R$ will be Gorenstein with $\dim_k \Homology\!R \geq
2$.

The category $\Dcatc(R)$ has AR triangles by Theorem \ref{thm:R}, and
${}_{R}k$ is not in $\Dcatc(R)$ by Theorem \ref{thm:k}.

The AR quiver $\Gamma(\Dcatc(R))$ will be abbreviated to $\Gamma$.

Then $\Gamma$ with the AR translation $\tau$ is a stable translation
quiver by Proposition \ref{pro:stable_translation_quiver}.  By $C$
will be denoted a component of the translation quiver $\Gamma$.
\end{setup}

\begin{lemma}
\label{lem:no_loops}
\begin{enumerate}

  \item  No positive power $\tau^p$ of the AR translation $\tau$ has a
    fixed point in $\Gamma$.

  \item  $\Gamma$ has no loops.

\end{enumerate}
\end{lemma}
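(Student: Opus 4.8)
The plan is to prove both statements together, since a loop at a vertex $X$ in $\Gamma$ would force $\tau X \cong X$ (because the $\tau$-orbit of a vertex is determined by the arrows into it in a stable translation quiver, and more directly because AR triangles give that a loop at $X$ means $X$ is a summand of the middle term of its own AR triangle, forcing $\tau X \cong X$). So it suffices to rule out $\tau^p X \cong X$ for every indecomposable $X$ in $\Dcatc(R)$ and every $p \geq 1$; part (2) then follows from part (1) with $p=1$ together with the loop-to-fixed-point observation. First I would record that by Remark \ref{rmk:tau}, Equation \eqref{equ:j}, the action of $\tau$ on homology is $\Homology(\tau(-)) \cong \Homology(\Sigma^{d-1}(-))$ as graded $k$-vector spaces, hence iterating, $\Homology(\tau^p(-)) \cong \Homology(\Sigma^{p(d-1)}(-))$.

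Next I would use this to track a numerical invariant. For an indecomposable $X$ in $\Dcatc(R)$, set $s = \sup X$ and $i = \inf X$; these are finite integers since $X$ is in $\Dcatf(R)$ by Proposition \ref{pro:DcinDf} and is nonzero. From $\Homology(\tau^p X) \cong \Homology(\Sigma^{p(d-1)} X)$ we get $\sup(\tau^p X) = \sup X - p(d-1)$ and $\inf(\tau^p X) = \inf X - p(d-1)$. Now recall from Setup \ref{set:blanket} that either $d = 0$ (excluded here since $\dim_k \Homology\!R \geq 2$ forces $d \geq 2$) or $d \geq 2$, so $d - 1 \geq 1$. Therefore if $p \geq 1$ then $\sup(\tau^p X) = \sup X - p(d-1) < \sup X$, so $\tau^p X \not\cong X$. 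This already gives part (1).

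For part (2), suppose $\Gamma$ had a loop at a vertex $X$, i.e.\ an arrow $X \to X$. In a stable translation quiver arising as an AR quiver, an arrow $X \to X$ means $X$ appears as a direct summand of the middle term $N$ of the AR triangle $\tau X \to N \to X \to$ ending in $X$; by the Krull--Schmidt property of $\Dcatc(R)$ from Proposition \ref{pro:DcandDfKrullSchmidt}, and the standard fact that in an AR triangle the middle term cannot have the end term as a summand unless the AR triangle splits — which it does not — one is forced to $\tau X \cong X$. This contradicts part (1) with $p = 1$. Hence $\Gamma$ has no loops. The main obstacle is making precise the passage "loop $\Rightarrow$ $\tau X \cong X$"; I would handle it by invoking the AR-triangle characterization of arrows in $\Gamma$ (each arrow $Y \to X$ corresponds to an irreducible map, and the multiplicity of $Y$ as a summand of the middle term of the AR triangle terminating at $X$ equals the number of arrows $Y \to X$), together with the fact that no AR triangle $\tau X \to N \to X \to$ can have $X$ as a summand of $N$ while $X \not\cong \tau X$, since an irreducible map $X \to X$ cannot be split mono or split epi and the section/retraction arguments then force $X \cong \tau X$.
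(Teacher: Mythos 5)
Part (1) of your argument is correct and is essentially the paper's: the paper computes $\inf\tau(M)=1+\inf\dual\!R+\inf M=\inf M-(d-1)$ directly from $\tau=\Sigma^{-1}(\dual\!R\LTensor_R-)$ via Lemma \ref{lem:inf}, while you invoke Equation \eqref{equ:j}; either way, $d\geq 2$ gives $\inf\tau^pM<\inf M$ for $p\geq 1$, so no positive power of $\tau$ has a fixed point.

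Part (2), however, has a genuine gap at the only step that carries any content: the passage from ``$\Gamma$ has a loop at $[X]$'' to ``$\tau X\cong X$''. What you can legitimately extract from a loop is an irreducible morphism $X\to X$, hence (by Lemma \ref{lem:Happel}) that $X$ is a direct summand of the middle term $N$ of the AR triangle $\tau X\to N\to X\to$ and that there is an irreducible morphism $\tau X\to X$; none of this yields $\tau X\cong X$ by formal section/retraction manipulations. The ``standard fact'' you cite --- that the middle term cannot have the end term as a summand unless the triangle splits --- is not something you can invoke: for Artin algebras it is precisely the no-loop theorem you are trying to prove, and as stated it would anyway contradict the conclusion you draw from it (it would make the loop impossible outright rather than force $\tau X\cong X$). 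Your other justification, that in a stable translation quiver the arrows into a vertex determine its $\tau$-orbit, is also false; an arrow $X\to X$ only produces an arrow $\tau X\to X$. The missing ingredient, which is how the paper argues, is that an irreducible morphism $X\to X$ lies in the radical of the finite-dimensional local algebra $\Hom_{\Dcatc(R)}(X,X)$ (Proposition \ref{pro:DcandDfKrullSchmidt}) and is therefore nilpotent; one then mimics the proof of \cite[lem.\ VII.2.5]{ARS}, feeding the factorization of the nonzero power $f^{n-1}$ through the right almost split map back into the triangle, to conclude $\tau X\cong X$ and hence contradict part (1). Without reproducing (or at least citing) that nilpotence argument, part (2) is unproved.
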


\begin{proof}
(1).  Remark \ref{rmk:tau} says $\tau(M) = \Sigma^{-1}(\dual\!R
\LTensor_R M)$.  Lemma \ref{lem:inf} implies
\[
  \inf \tau(M) = 1 + \inf \dual\!R + \inf M = 1 - d +\inf M.
\]
Since $d$ is either $0$ or $\geq 2$, it follows that each positive
power $\tau^p(M)$ has $\inf$ different from $\inf M$, so no positive
power is isomorphic to $M$.

(2).  The existence of a loop $[M] \rightarrow [M]$ would mean the
existence of an irreducible morphism $M \rightarrow M$ in
$\Dcatc(R)$.  Such a morphism would be in the radical of the finite
dimensional algebra $\Hom_{\Dcatc(R)}(M,M)$, and hence some power
would be zero.  Mimicking the proof of \cite[lem.\ VII.2.5]{ARS} now
shows $\tau(M) = M$, but this contradicts part (1).
\end{proof}

A reference for the graph theoretical terminology of the following
proposition is \cite[sec.\ 4.15]{BensonI}.  A salient fact is that
when $T$ is a directed tree, then the vertices of the repetitive
quiver $\BZ T$ have the form $(p,t)$ where $p$ is an integer, $t$ is a
vertex of $T$.  The translation of the stable translation quiver $\BZ
T$ is determined by $\tau(p,t) = (p+1,t)$.

\begin{proposition}
There exist a directed tree $T$ and an admissible group of
automorphisms $\Pi$ of $\BZ T$ so that $C \cong \BZ T/\Pi$ as stable
translation quivers.
\end{proposition}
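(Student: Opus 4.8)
The plan is to apply the Riedtmann Structure Theorem, which says precisely that a connected stable translation quiver with no loops and no multiple arrows is of the form $\BZ T/\Pi$ for a directed tree $T$ (unique up to isomorphism, as the tree class of the component) and an admissible group $\Pi$ of automorphisms. So the entire task reduces to checking the hypotheses of that theorem for the component $C$.

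First I would recall that $\Gamma$ together with $\tau$ is a stable translation quiver by Setup~\ref{set:local} (via Proposition~\ref{pro:stable_translation_quiver}), hence so is its connected component $C$. Next, Lemma~\ref{lem:no_loops}(2) says $\Gamma$ has no loops, so neither does $C$. The remaining point to verify is that $C$ has no multiple arrows, i.e.\ for any two vertices $[M],[N]$ the arrow $[M]\to[N]$ occurs with multiplicity at most one; equivalently, in the Krull--Schmidt category $\Dcatc(R)$ the space of irreducible morphisms $\operatorname{irr}(M,N)$ is at most one-dimensional. This is where I would concentrate the work: the standard argument is that since $\Dcatc(R)$ has AR triangles and its Hom spaces are finite dimensional (Proposition~\ref{pro:DcandDfKrullSchmidt}), the number of arrows $[M]\to[N]$ equals the multiplicity of $N$ as a summand of the middle term $Y$ of the AR triangle $\Sigma^{-1}SM\to Y\to M\to{}$, and one shows this multiplicity is $1$ by combining the left and right almost-split properties with the fact that the relevant endomorphism rings are local with residue field $k$ — because $\Dcatc(R)$ is a $k$-category whose objects have finite-dimensional endomorphism rings and split idempotents, so each indecomposable has endomorphism ring local with residue division ring that is finite-dimensional over $k$; since $k$ is a field, combined with the Serre functor being $k$-linear, one gets that the multiplicity in each direction is the $k$-dimension of a one-dimensional space. (Alternatively one can invoke the general fact, valid for Hom-finite Krull--Schmidt triangulated $k$-categories with AR triangles, that the AR quiver has no multiple arrows; this is proved for instance along the lines of \cite[Sec.\ VII.1]{ARS}.)

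Once no loops and no multiple arrows are established for the connected stable translation quiver $C$, the Riedtmann Structure Theorem applies verbatim and yields a directed tree $T$ and an admissible automorphism group $\Pi$ of $\BZ T$ with $C\cong\BZ T/\Pi$, as desired.

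The main obstacle is the no-multiple-arrows verification: one must carefully control the multiplicity of each arrow in the AR quiver of $\Dcatc(R)$, which requires knowing that all the relevant division rings of endomorphisms are exactly $k$ (so that "multiplicity" as counted on the left of an AR triangle agrees with "multiplicity" as counted on the right). Everything else — stability of the translation quiver, absence of loops — is already supplied by Setup~\ref{set:local} and Lemma~\ref{lem:no_loops}, and the invocation of Riedtmann's theorem is then purely formal.
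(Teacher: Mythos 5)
Your overall route is the same as the paper's: verify the hypotheses of the Riedtmann Structure Theorem for the connected stable translation quiver $C$ (stability from Setup \ref{set:local} via Proposition \ref{pro:stable_translation_quiver} together with the fact that $\tau$ extends to an autoequivalence by Remark \ref{rmk:tau}, no loops from Lemma \ref{lem:no_loops}(2)) and then quote \cite[thm.\ 4.15.6]{BensonI}. The one place you diverge is where you ``concentrate the work,'' and that is exactly where your argument goes wrong. In this paper the AR quiver is \emph{defined} (see the definition of the AR quiver in Appendix \ref{app:AR}) to have a single arrow $[M]\rightarrow[N]$ whenever an irreducible morphism $M\rightarrow N$ exists; multiplicities are not recorded as multiple arrows but as the labels $(\alpha_{\mu},\beta_{\mu})$ introduced in Definition \ref{def:varphi}. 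So ``no multiple arrows'' holds by definition and requires no proof — the paper disposes of it in one clause.

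More seriously, the claim you propose to prove in its place — that the multiplicity of $N$ in the middle term of the AR triangle is $1$, equivalently that the space of irreducible morphisms is one-dimensional — is not deducible from Hom-finiteness, split idempotents, and locality of endomorphism rings, and it is false in general for Hom-finite Krull--Schmidt triangulated $k$-categories with AR triangles (labels greater than $1$ do occur; that is precisely why the Riedtmann machinery works with labelled trees). In the present paper the statement that all labels equal $(1,1)$ is a genuine theorem, namely Theorem \ref{thm:quiver_local_structure}(2), and its proof needs the additive unbounded function $f$ of Proposition \ref{pro:f} together with \cite[thm.\ 4.5.8(iv)]{BensonI}; it is established \emph{after} the present proposition and cannot be used, nor proved by the soft argument you sketch, at this stage. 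Your proof survives only because the step you flagged as the main obstacle is vacuous under the paper's conventions; if you delete that digression, what remains coincides with the paper's proof.
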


\begin{proof}
Since $\tau$ extends to an autoequivalence of $\Dcatc(R)$ by Remark
\ref{rmk:tau}, the AR translation is an automorphism of $\Gamma$ so
restricts to an automorphism of $C$.  By definition, $C$ has no
multiple arrows, and by Lemma \ref{lem:no_loops}(2), it has no
loops.  Hence the proposition follows from the Riedtmann Structure
Theorem, see \cite[thm.\ 4.15.6]{BensonI}.
\end{proof}

To show that $T = A_{\infty}$ and that $\Pi$ acts trivially, the
following definitions are useful.

\begin{definition}
\label{def:varphi}
Define a function on the objects of $\Dcat(R)$ by
\[
  \varphi(M) = \dim_k \Ext_R(M,k).
\]
By abuse of notation, the induced function on the vertices of the AR
quiver $\Gamma$ is also denoted by $\varphi$.

Label the AR quiver $\Gamma$ by assigning to the arrow $[M]
\stackrel{\mu}{\rightarrow} [N]$ the label
$(\alpha_{\mu},\beta_{\mu})$, where $\alpha_{\mu}$ is the multiplicity
of $M$ as a direct summand of $Y$ in the AR triangle
\[
  \tau N \rightarrow Y \rightarrow N \rightarrow
\]
and $\beta_{\mu}$ is the multiplicity of $N$ as a direct summand of
$X$ in the AR triangle
\[
  M \rightarrow X \rightarrow \tau^{-1}M \rightarrow.
\]

The vertices of $\BZ T$ have the form $(p,t)$ where $p$ is an integer,
$t$ a vertex of $T$, so each vertex $t$ of $T$ gives a vertex $(0,t)$
of $\BZ T$ and hence a vertex $\Pi(0,t)$ of $\BZ T/\Pi$, that is, of
$C$.  Similarly, an arrow $t \rightarrow t^{\prime}$ in $T$ gives an
arrow $\Pi(0,t) \rightarrow \Pi(0,t^{\prime})$ of $C$.  Hence the
function $\varphi$ and the labelling $(\alpha,\beta)$ on
$\Gamma$ induce a function and a labelling on $T$.  These will be
denoted by $f$ and $(a,b)$.
\end{definition}

\begin{lemma}
\label{lem:varphi}
The function $\varphi$ and the labelling $(\alpha,\beta)$ have the
following properties.
\begin{enumerate}

  \item  If $F$ is a minimal semi-free resolution of $M$ with
    $F^{\natural} \cong \bigoplus_i \Sigma^i (R^{\natural})^{(\beta_i)}$,
    then $\varphi(M)$ is equal to the number of direct summands
    $\Sigma^i R^{\natural}$ in $F^{\natural}$.

  \item  $\varphi(\tau N) = \varphi(N)$.

  \item  If $\tau N \rightarrow Y \rightarrow N \rightarrow$ is an AR
    triangle in $\Dcatc(R)$, then $\varphi(Y) = \varphi(\tau N) +
    \varphi(N)$.

  \item If there is an arrow $[M] \stackrel{\mu}{\rightarrow} [N]$ in
  $\Gamma$ then there is a corresponding arrow $\tau[N]
  \stackrel{\nu}{\rightarrow} [M]$, and $(\alpha_{\nu},\beta_{\nu}) =
  (\beta_{\mu},\alpha_{\mu})$.
  
  \item If there is an arrow $[M] \stackrel{\mu}{\rightarrow} [N]$ in
  $\Gamma$ then there is also an arrow $\tau[M]
  \stackrel{\tau(\mu)}{\rightarrow} \tau[N]$, and
  $(\alpha_{\tau(\mu)},\beta_{\tau(\mu)}) =
  (\alpha_{\mu},\beta_{\mu})$.

  \item  $\sum_{\mu : [M] \rightarrow [N]}\alpha_{\mu}\varphi(M) =
    \varphi(\tau N) +  \varphi(N)$, where the sum is over all arrows
    in $\Gamma$ with target $[N]$.

\end{enumerate}
\end{lemma}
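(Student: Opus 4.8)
The plan is to base everything on the identity $\varphi(M) = \dim_k \Homology(k \LTensor_R M)$, i.e.\ that $\varphi(M)$ is the total dimension of $\Tor^R(k,M)$; this is the derived tensor-hom adjunction $\RHom_R(M,k) \cong \RHom_k(k \LTensor_R M, k) = \dual(k \LTensor_R M)$, reading $k$ as a $(k,R)$-bimodule and using $\RHom_k(k_R, k) \cong {}_R k$. Granting this, part (1) is exactly the computation already carried out inside the proof of Proposition \ref{pro:DcandDfKrullSchmidt}: if $F$ is a minimal semi-free resolution of $M$ then Lemma \ref{lem:semi-free}(5) makes the differential of $\Hom_R(F,k)$ zero, so $\Ext_R(M,k) \cong \Hom_{R^{\natural}}(F^{\natural},k^{\natural}) \cong \bigoplus_i \Sigma^{-i}(k^{\natural})^{(\beta_i)}$, whose dimension $\sum_i \beta_i$ is the number of free summands of $F^{\natural}$.

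For part (2) I would use Remark \ref{rmk:tau}, which gives $\tau N \cong \Sigma^{-1}(\dual R \LTensor_R N)$. Since $R$ is Gorenstein, the proof of Theorem \ref{thm:Gorenstein} shows $\RHom_R(k,R) \cong (\Sigma^{-d}k)_R$, and inserting this into Equation \eqref{equ:c}, namely $\RHom_R(k,R) \LTensor_R \dual R \cong k$, yields the isomorphism $k \LTensor_R \dual R \cong \Sigma^d k$ of right DG $R$-modules. By associativity of the derived tensor product over the bimodule $\dual R$ this gives $k \LTensor_R \tau N \cong \Sigma^{-1}\bigl((k \LTensor_R \dual R) \LTensor_R N\bigr) \cong \Sigma^{d-1}(k \LTensor_R N)$, hence $\varphi(\tau N) = \dim_k \Homology(k \LTensor_R N) = \varphi(N)$.

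The core of the lemma is part (3), and part (6) then follows at once, because by the definition of the labelling the middle term of the AR triangle $\tau N \rightarrow Y \rightarrow N \rightarrow$ is $Y \cong \bigoplus_{\mu : [M] \rightarrow [N]} M_{\mu}^{\alpha_{\mu}}$, so $\varphi(Y) = \sum_{\mu} \alpha_{\mu}\varphi(M_{\mu})$. Write the AR triangle as $\tau N \stackrel{u}{\rightarrow} Y \stackrel{v}{\rightarrow} N \stackrel{h}{\rightarrow} \Sigma\tau N$. By Proposition \ref{pro:AR_triangles_preserved} it is an AR triangle in $\Dcatf(R)$, so $u$ is left almost split there: every morphism $\tau N \rightarrow W$ with $W$ in $\Dcatf(R)$ that is not a section factors through $u$. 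I would apply this with $W = \Sigma^i k$, which lies in $\Dcatf(R)$; such a morphism cannot be a section, since $\tau N$ and $\Sigma^i k$ are indecomposable while $\tau N$ is compact and ${}_R k$ is not (Theorem \ref{thm:k}, using $\dim_k \Homology R \geq 2$). Hence $\Ext_R^i(u,k) : \Ext_R^i(Y,k) \rightarrow \Ext_R^i(\tau N,k)$ is surjective for all $i$, so its $k$-dual $\Homology(k \LTensor_R u) : \Homology(k \LTensor_R \tau N) \rightarrow \Homology(k \LTensor_R Y)$ is injective. Applying $k \LTensor_R -$ to the triangle and inspecting the long exact homology sequence, this injectivity forces the connecting map $\Homology(k \LTensor_R h)$ to vanish, so $0 \rightarrow \Homology(k \LTensor_R \tau N) \rightarrow \Homology(k \LTensor_R Y) \rightarrow \Homology(k \LTensor_R N) \rightarrow 0$ is exact; taking total dimensions gives $\varphi(Y) = \varphi(\tau N) + \varphi(N)$. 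I expect this to be the main obstacle: the natural test object is $k$, which is \emph{not} compact, and the point of the argument is to invoke the almost-split property in $\Dcatf(R)$ --- which does contain $k$ --- rather than in $\Dcatc(R)$.

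Parts (4) and (5) are combinatorial and rest on Remark \ref{rmk:tau}, which makes $\tau$ an automorphism of the stable translation quiver $\Gamma$. For (5), $\tau$ sends $\mu : [M] \rightarrow [N]$ to an arrow $\tau(\mu) : \tau[M] \rightarrow \tau[N]$ and carries AR triangles to AR triangles, hence the middle term of the triangle ending (resp.\ starting) at $N$ to that ending (resp.\ starting) at $\tau N$, preserving all multiplicities; so $(\alpha_{\tau(\mu)},\beta_{\tau(\mu)}) = (\alpha_{\mu},\beta_{\mu})$. For (4), $\nu : \tau[N] \rightarrow [M]$ is the arrow assigned to $\mu$ by the translation-quiver structure, and $(\alpha_{\nu},\beta_{\nu}) = (\beta_{\mu},\alpha_{\mu})$ on unwinding Definition \ref{def:varphi}: the triangle $\tau N \rightarrow Y \rightarrow N \rightarrow$ is simultaneously the AR triangle ending at $N$ and the one starting at $\tau N$, so both $\alpha_{\mu}$ and $\beta_{\nu}$ equal the multiplicity of $M$ in $Y$; and since the AR triangle starting at $M$ is the image under $\tau^{-1}$ of the one ending at $M$, both $\beta_{\mu}$ and $\alpha_{\nu}$ equal the multiplicity of $\tau N$ in the middle term of the AR triangle ending at $M$.
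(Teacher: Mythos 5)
Your proposal is correct and follows essentially the same route as the paper: part (1) is the same minimal-resolution count, part (2) is the same use of Gorenstein duality (you phrase it on the $\LTensor$ side via $k \LTensor_R \dual\!R \cong \Sigma^d k$ where the paper uses adjunction and $\RHom_R(\dual\!R,k) \cong \Sigma^{-d}k$), and part (3) hinges on exactly the paper's key observation that a morphism $\tau N \rightarrow \Sigma^i k$ cannot be a split monomorphism because $\tau N$ is compact while ${}_{R}k$ is not, so it factors through the middle term and the connecting maps vanish. Parts (4)--(6) match the paper's arguments as well.
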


\begin{proof}
(1).  It holds that
\[
  \varphi(M)
  = \dim_k \Homology(\RHom_R(M,k))
  \stackrel{\rm (c)}{=} \dim_k \Homology(\Hom_R(F,k))
  \stackrel{\rm (d)}{=} \dim_k \Hom_{R^{\natural}}(F^{\natural},k^{\natural}),
\]
where (c) and (d) are by Lemma \ref{lem:semi-free}, parts (2) and (5).
The right hand side is clearly equal to the number of direct summands
$\Sigma^i R^{\natural}$ in $F^{\natural}$.

(2).  It holds that
\begin{align*}
  \varphi(\tau N)
  & \stackrel{\rm (a)}{=} \dim_k \Homology(\RHom_R(\Sigma^{-1}(\dual\!R \LTensor_R N),k)) \\
  & = \dim_k \Homology(\RHom_R(N,\Sigma \RHom_R(\dual\!R,k))) \\
  & \stackrel{\rm (b)}{=} \dim_k \Homology(\RHom_R(N,\Sigma^{1-d}k)) \\
  & = \dim_k \Homology(\RHom_R(N,k)) \\
  & = \varphi(N),
\end{align*}
where (a) is by Remark \ref{rmk:tau} and (b) follows from Theorem
\ref{thm:Gorenstein}(3).

(3).  The AR triangle of the lemma induces a long exact sequence
consisting of pieces
\[
  \Ext_R^i(N,k)
  \rightarrow \Ext_R^i(Y,k)
  \rightarrow \Ext_R^i(\tau N,k),
\]
and the claim will follow if the connecting maps are zero.

Indeed, the AR triangle is also an AR triangle in $\Dcatf(R)$ by
Proposition \ref{pro:AR_triangles_preserved}.  A morphism $\tau N
\rightarrow {}_{R}(\Sigma^i k)$ in $\Dcatf(R)$ cannot be a split
monomorphism since $\tau N$ is in $\Dcatc(R)$ while ${}_{R}(\Sigma^i
k)$ is not, cf.\ Setup \ref{set:local}.  It follows that each such
morphism factors through $\tau N \rightarrow Y$ whence the composition
$\Sigma^{-1}N \rightarrow \tau N \rightarrow \Sigma^i k$ is zero.
Hence the connecting morphism $\Ext_R^i(\tau N,k) \rightarrow
\Ext_R^{i+1}(N,k)$ is zero as desired.

(4).  Let
\begin{equation}
\label{equ:e}
  \tau N \rightarrow Y \rightarrow N \rightarrow
\end{equation}
be an AR triangle in $\Dcatc(R)$.  By the definition of the labelling
of $\Gamma$, the multiplicity of $M$ as a direct summand of $Y$ is
equal to both $\beta_{\nu}$ and $\alpha_{\mu}$, so $\beta_{\nu} =
\alpha_{\mu}$.  A similar argument shows $\alpha_{\nu} = \beta_{\mu}$,
so $(\alpha_{\nu},\beta_{\nu}) = (\beta_{\mu},\alpha_{\mu})$.

(5).  This holds since the AR translation $\tau$ of $\Dcatc(R)$ is the
restriction of an equivalence of categories by Remark \ref{rmk:tau}.

(6).  Consider the AR triangle \eqref{equ:e}.  The object
$Y$ is a direct sum of copies of the indecomposable objects of
$\Dcatc(R)$ which have irreducible morphisms to $N$, and the
multiplicity of $M$ as a direct summand of $Y$ is $\alpha_{\mu}$ where
$[M] \stackrel{\mu}{\rightarrow} [N]$ is the arrow in $\Gamma$.  Hence
\[
  \sum_{\mu:[M] \rightarrow [N]} \alpha_{\mu}\varphi(M) = \varphi(Y).
\]
Now combine with part (3).
\end{proof}

\begin{lemma}
\label{lem:zero}
Let $M \langle 0 \rangle, \ldots, M \langle 2^p - 1 \rangle$ be
indecomposable objects of $\Dcatc(R)$ with $\varphi(M \langle i
\rangle) \leq \frac{p}{\dim_k R}$ for each $i$.  If 
\[
  M \langle 2^p - 1 \rangle \rightarrow M \langle 2^p - 2 \rangle
  \rightarrow \cdots \rightarrow M \langle 0 \rangle
\]
are non-isomorphisms in $\Dcatc(R)$, then the composition is zero.
\end{lemma}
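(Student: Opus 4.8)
The plan is to exploit the fact that a composition of morphisms in the radical of a Krull--Schmidt category, passing through objects that are ``small'' in the sense of $\varphi$, cannot stay non-zero for too long; the dyadic indexing $M\langle 0\rangle,\ldots,M\langle 2^p-1\rangle$ is a signal that the argument should be structured as a halving induction on $p$. First I would reduce to the case where the $M\langle i\rangle$ are successive terms along which the composition is built up, and recall that $\RHom_R(M\langle i\rangle,k)$ has total dimension $\varphi(M\langle i\rangle)$ by Lemma~\ref{lem:varphi}(1), while the semi-free resolution bound of Lemma~\ref{lem:semi-free}(3) forces this cohomology to live in a band of degrees of width at most $\varphi(M\langle i\rangle)\cdot\dim_k R$ (each generator of the resolution sits in a degree, and $R$ itself has amplitude $\le \dim_k R - 1$; more precisely, $\amp\RHom_R(M,k)$ is controlled by $(\#\text{generators})\cdot\amp R$). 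Under the hypothesis $\varphi(M\langle i\rangle)\le \tfrac{p}{\dim_k R}$, each such band has width at most $p$.

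The key step is an induction on $p$. For $p=1$ there are two objects $M\langle 1\rangle\to M\langle 0\rangle$ with $\varphi\le\tfrac{1}{\dim_k R}<1$, hence $\varphi=0$, so $M\langle i\rangle$ is finitely built from $R$ using no copies of $R$, i.e.\ $M\langle i\rangle\simeq 0$, and the composition is trivially zero. (Alternatively, one phrases the base case so that a single non-isomorphism in the radical between objects with local endomorphism rings, combined with the $\varphi$-bound, already forces the map to factor through something that kills it.) For the inductive step, split the chain of length $2^p$ at its midpoint: write the composition as $g\circ h$, where $h:M\langle 2^p-1\rangle\to\cdots\to M\langle 2^{p-1}\rangle$ and $g:M\langle 2^{p-1}\rangle\to\cdots\to M\langle 0\rangle$ are each compositions of $2^{p-1}$ non-isomorphisms. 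I would like to apply the inductive hypothesis to each half, but the $\varphi$-bound there must be $\tfrac{p-1}{\dim_k R}$, which is \emph{weaker} than what we have; so the real content is to show that a composition of $2^{p-1}$ radical maps through objects with $\varphi\le\tfrac{p}{\dim_k R}$ has image concentrated in a sub-band that shrinks by one, and then iterate. Concretely, applying $\RHom_R(-,k)$ turns the chain into a chain of maps of complexes; the point is that a radical (non-isomorphism) map between indecomposables in $\Dcatc(R)$ induces on $\RHom_R(-,k)$ a map which is \emph{not} an isomorphism on the extreme (top or bottom) cohomology degree of the target, hence after composing $\dim_k R$ such maps the surviving cohomology has dropped by at least one degree. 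Running this bookkeeping, $2^p$ such maps drop the effective width to zero, which forces the total composite to be zero.

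The main obstacle I anticipate is precisely this bookkeeping: making rigorous the claim that a non-isomorphism in $\Dcatc(R)$ must fail to be an isomorphism on a boundary cohomology degree of $\RHom_R(-,k)$, and tracking how the ``effective support'' of the composite contracts under composition in a way that meshes exactly with the dyadic count $2^p$ and the linear bound $\tfrac{p}{\dim_k R}$. The geometric/numerical heart is a statement of the form: if $X\xrightarrow{\ f\ }Y$ is a non-isomorphism of indecomposables, then $\amp\big(\Image(\RHom_R(Y,k)\to\RHom_R(X,k))\big)<\amp\RHom_R(Y,k)$, or some variant controlling the extreme degrees; once that local statement is in hand, the halving induction is routine. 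I would structure the write-up so that this local lemma is isolated first, the base case $p=1$ handled by the $\varphi=0\Rightarrow M\simeq 0$ observation, and the inductive step a short paragraph invoking the local lemma $\dim_k R$ times within each of the two halves.
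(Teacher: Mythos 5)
Your proposal correctly identifies the Harada--Sai mechanism behind the dyadic bound, but it applies that mechanism to the wrong object, and this creates a genuine gap. You run the bookkeeping on $\RHom_R(-,k)$. Even if that bookkeeping succeeded, the most it could show is that the composite induces the zero map on $\Ext_R(-,k)$, and this does not imply that the composite is zero in $\Dcatc(R)$: any morphism carrying the generators of a minimal semi-free resolution into $R^{\geq 1}$ times the target induces zero on $\Ext_R(-,k)$ while being nonzero in general (e.g.\ for $R \simeq k[X]/(X^2)$ with $X$ in degree $d$, multiplication by $X$ gives a nonzero morphism $R \rightarrow \Sigma^d R$ that dies under $\RHom_R(-,k)$). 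Moreover, the ``local lemma'' you isolate --- that a non-isomorphism strictly decreases the amplitude of the image in $\RHom_R(-,k)$ --- is false as stated: the induced map can be bijective in the extreme degrees and fail only in a middle degree, so the amplitude of the image need not drop. Amplitude is in any case the wrong invariant for a Harada--Sai argument; the classical proof tracks the \emph{length} (here: $k$-dimension) of the image, and the halving induction is on the number of composed maps with the length bound held fixed, not an induction on $p$ with a shrinking bound as you set it up.

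The paper's proof instead replaces each $M\langle i \rangle$ by its minimal semi-free resolution $F\langle i \rangle$, which is indecomposable as a DG module (by minimality, a decomposition of $F\langle i \rangle$ would force a nontrivial decomposition of $M\langle i \rangle$ in $\Dcatc(R)$), represents the given morphisms by honest DG-module morphisms $F\langle 2^p-1\rangle \rightarrow \cdots \rightarrow F\langle 0 \rangle$ which cannot be bijections, and observes via Lemma \ref{lem:varphi}(1) that $\dim_k F\langle i \rangle = \varphi(M\langle i \rangle)\cdot\dim_k R \leq p$. This is the actual role of the factor $\dim_k R$ in the hypothesis --- it converts the $\varphi$-bound into a bound on the total $k$-dimension of the resolution, not a bound on the width of a band of degrees (your estimate conflates $\dim_k R$ with $\amp R$). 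One then mimics the Harada--Sai lemma \cite[lem.\ 4.14.1]{BensonI} for these indecomposable DG modules of dimension at most $p$ to conclude that the composite of the module maps, and hence the composite in $\Dcatc(R)$, is zero. The fix for your write-up is to transport the entire dyadic argument from $\RHom_R(-,k)$ to the resolutions $F\langle i \rangle$ themselves.
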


\begin{proof}
Let $F \langle i \rangle$ be a minimal semi-free resolution of $M
\langle i \rangle$.  Each $F \langle i \rangle$ must be
indecomposable as a DG left-$R$-module, for if $F \langle i \rangle$
decomposed then it would do so into DG modules $F
\langle i_{\alpha} \rangle$ with $\partial(F \langle i_{\alpha}
\rangle) \subseteq R^{\geq 1} \cdot F \langle i_{\alpha} \rangle$, but
this condition forces non-zero cohomology so the decomposition of $F
\langle i \rangle$ as a DG module would induce a non-trivial
decomposition of $M \langle i \rangle$ in $\Dcatc(R)$.

The morphisms in $\Dcatc(R)$ between the $M \langle i \rangle$ are
represented by morphisms
\begin{equation}
\label{equ:f}
  F \langle 2^p - 1 \rangle \rightarrow F \langle 2^p - 2 \rangle
  \rightarrow \cdots \rightarrow F \langle 0 \rangle
\end{equation}
of DG left-$R$-modules.  These cannot be bijections, since if they
were, then the morphisms in $\Dcatc(R)$ between the $M \langle i
\rangle$ would be isomorphisms.

Now note that if $F \langle i \rangle^{\natural} = \bigoplus_j
\Sigma^j(R^{\natural})^{(\beta_j)}$, then the direct sum has
$\varphi(M \langle i \rangle)$ summands $\Sigma^j R^{\natural}$ by
Lemma \ref{lem:varphi}(1).  Hence
\[
  \dim_k F \langle i \rangle
  = \varphi(M \langle i \rangle) \dim_k R
  \leq \frac{p}{\dim_k R} \dim_k R
  = p,
\]
and it is not hard to mimick the proof of \cite[lem.\ 4.14.1]{BensonI}
to see that hence, the composition of the morphisms in Equation
\eqref{equ:f} is zero.  This implies that the composition of the
morphisms in the lemma is zero.
\end{proof}

\begin{lemma}
\label{lem:non-zero}
If $M \langle 0 \rangle$ is an indecomposable object of $\Dcatc(R)$
and $q \geq 0$ is an integer, then there exist indecomposable objects
and irreducible morphisms in $\Dcatc(R)$,
\[
  M \langle q \rangle \rightarrow M \langle q-1 \rangle
  \rightarrow \cdots \rightarrow M \langle 0 \rangle,
\]
with non-zero composition.
\end{lemma}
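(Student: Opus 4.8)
The plan is to construct the chain of irreducible morphisms inductively, starting from $M\langle 0\rangle$ and repeatedly applying the Auslander--Reiten property to produce the next term. The key point will be to show that the process never stops, i.e.\ that at each stage there is genuinely an indecomposable object mapping irreducibly to the current term by a morphism that can be chosen so that the accumulated composition remains non-zero. The two facts I would lean on are: (i) $\Dcatc(R)$ has AR triangles (Setup \ref{set:local}, via Theorem \ref{thm:R}), so for every indecomposable $M\langle i\rangle$ there is an AR triangle $\tau M\langle i\rangle \rightarrow Y\langle i\rangle \rightarrow M\langle i\rangle \rightarrow$, and $Y\langle i\rangle$ is non-zero because $\tau M\langle i\rangle \not\cong 0$; (ii) the defining property of an AR triangle: any non-isomorphism $Z \rightarrow M\langle i\rangle$ with $Z$ indecomposable factors through $Y\langle i\rangle \rightarrow M\langle i\rangle$, and $Y\langle i\rangle \rightarrow M\langle i\rangle$ is itself not a split epimorphism, hence lies in the radical.

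First I would set up the induction. Suppose we have built indecomposables $M\langle q\rangle \rightarrow \cdots \rightarrow M\langle 0\rangle$ with irreducible morphisms and non-zero composition $g_q\colon M\langle q\rangle \rightarrow M\langle 0\rangle$. Consider the AR triangle ending in $M\langle q\rangle$; write $Y\langle q\rangle = \bigoplus_\alpha M'_\alpha$ as a sum of indecomposables, so the component morphisms $M'_\alpha \rightarrow M\langle q\rangle$ are irreducible. I claim that $g_q$ composed with $Y\langle q\rangle \rightarrow M\langle q\rangle$ is non-zero: this is the crucial step. Indeed, the morphism $Y\langle q\rangle \rightarrow M\langle q\rangle$ is a right almost split morphism, and in a Krull--Schmidt category with AR triangles the composite of $g_q$ with a right almost split map landing in $M\langle q\rangle$ detects whether $g_q$ lies in $\operatorname{rad}^2$; but one can do better — since $M\langle q\rangle$ is not projective (it admits a non-trivial AR triangle) every endomorphism chain argument shows that $g_q$ itself, being a non-isomorphism between indecomposables, factors through $Y\langle q\rangle \rightarrow M\langle q\rangle$, say $g_q = h\circ p$ with $p\colon M\langle q\rangle \rightarrow Y\langle q\rangle$; wait — that is the wrong direction. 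The correct argument: apply the AR property at $M\langle q-1\rangle$ instead, which is exactly what produces $M\langle q\rangle$ in the first place. So the induction should be organised differently.

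Re-organising: at stage $q$, I have $g_q\colon M\langle q\rangle \rightarrow M\langle 0\rangle$ non-zero with $M\langle q\rangle$ indecomposable. If $g_q$ is an isomorphism we are already done (every term beyond can be taken a shift away); otherwise $g_q$ is a non-isomorphism between indecomposables, hence radical. Now apply the AR triangle $\tau M\langle q\rangle \rightarrow Y\langle q\rangle \xrightarrow{\,\pi\,} M\langle q\rangle \rightarrow$: since $g_q$ is not a split epimorphism and $M\langle q\rangle$ is indecomposable, $\operatorname{id}_{M\langle q\rangle}$ is not, so actually I want a morphism \emph{into} $M\langle q\rangle$. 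The right framing is: $\pi\colon Y\langle q\rangle \rightarrow M\langle q\rangle$ is right almost split. Because $g_q$ is non-zero and $M\langle q\rangle$ indecomposable non-projective, a standard argument (mimicking \cite[Lem.\ VII.2.5]{ARS} as already invoked in Lemma \ref{lem:no_loops}) shows there is an indecomposable summand $M\langle q+1\rangle$ of $Y\langle q\rangle$ such that the composite $M\langle q+1\rangle \hookrightarrow Y\langle q\rangle \xrightarrow{\pi} M\langle q\rangle \xrightarrow{g_q} M\langle 0\rangle$ is non-zero: if every such composite vanished, then $g_q\circ\pi = 0$, so $g_q$ would factor through the cone $\tau M\langle q\rangle[1]$ of $\pi$, and then a degree/amplitude count via Lemma \ref{lem:inf} (exactly as in Lemma \ref{lem:no_loops}(1), $\inf\tau M = 1-d+\inf M$ with $d\ge 2$) forces a contradiction with $g_q\ne 0$ by comparing $\inf$ and $\sup$ of source and target, since iterating would push the relevant cohomological degree out of the finite window where $\Homology M\langle 0\rangle$ lives. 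The main obstacle is precisely making this non-vanishing step airtight: one must rule out that $g_q$ factors through $\tau M\langle q\rangle[1]$, and the cleanest route is the numerical function $\varphi$ together with the $\inf$-shift formula, observing that along the chain $\inf M\langle i\rangle$ changes monotonically under $\tau$, so after finitely many steps a hypothetical factorisation would land in a DG module whose cohomology is concentrated outside the support of $\Homology M\langle 0\rangle$, contradicting non-zero composition. Granting this, $M\langle q+1\rangle \rightarrow M\langle q\rangle$ is irreducible (a component of a right almost split morphism from an indecomposable summand), $M\langle q+1\rangle$ is indecomposable, and the new composition $g_{q+1} = g_q\circ\pi\circ(\text{inclusion})$ is non-zero, completing the induction and hence the lemma.
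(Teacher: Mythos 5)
Your inductive scheme founders on exactly the step you flag as ``the main obstacle'', and the patch you propose does not close it. To prepend an irreducible morphism you must show that the accumulated composite $g_q\colon M\langle q\rangle\rightarrow M\langle 0\rangle$ stays non-zero after precomposition with the right almost split map $\pi\colon Y\langle q\rangle\rightarrow M\langle q\rangle$. By the long exact sequence of the AR triangle $\tau M\langle q\rangle\rightarrow Y\langle q\rangle\stackrel{\pi}{\rightarrow}M\langle q\rangle\stackrel{w}{\rightarrow}\Sigma\tau M\langle q\rangle$, one has $g_q\circ\pi=0$ precisely when $g_q$ factors through $w$, and nothing prevents this: $\Hom_{\Dcatc(R)}(\Sigma\tau M\langle q\rangle,M\langle 0\rangle)$ need not vanish, and any $g_q$ in the image of $w^*$ is a genuine counterexample to your claimed non-vanishing. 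Your proposed degree count goes the wrong way: since $\inf\Sigma\tau M=\inf M-d$, factoring through $\Sigma\tau M\langle q\rangle$ \emph{lowers} the infimum, whereas $\Hom(Z,M\langle 0\rangle)$ is killed only when $\inf Z$ is \emph{large} (larger than $\sup M\langle 0\rangle$); moreover there is no monotonicity of $\inf M\langle i\rangle$ along a chain of irreducible morphisms, since the summands of the middle term of an AR triangle are not controlled this way. So the contradiction you invoke is not available, and the induction as organised cannot be completed for an arbitrary non-zero $g_q$.

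The paper's proof resolves this by strengthening the induction hypothesis: it carries along a witness morphism ${}_R(\Sigma^i k)\stackrel{\kappa_q}{\rightarrow}M\langle q\rangle$ such that the total composite $\mu_1\circ\cdots\circ\mu_q\circ\kappa_q$ down to $M\langle 0\rangle$ is non-zero. Because ${}_Rk$ is not in $\Dcatc(R)$ (Theorem \ref{thm:k}, recorded in Setup \ref{set:local}), $\kappa_{q-1}$ can never be a split epimorphism, so by the defining property of the AR triangle in $\Dcatf(R)$ it \emph{must} factor through the right almost split map $\mu_q'\colon X\langle q\rangle\rightarrow M\langle q-1\rangle$; the non-zero composite then survives on some indecomposable summand of $X\langle q\rangle$, which becomes $M\langle q\rangle$. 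The base case ($\Hom(\Sigma^i k, M\langle 0\rangle)\neq 0$ for some $i$) is obtained from a minimal semi-free resolution of $\dual M\langle 0\rangle$ over $R^{\opp}$. This witness is the missing mechanism in your argument: it is what forces the factorisation through $\pi$ at every stage, rather than hoping that $g_q$ happens to avoid the image of $w^*$.
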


\begin{proof}
Let me prove a stronger statement which implies the lemma: If $M
\langle 0 \rangle$ is an indecomposable object of $\Dcatc(R)$ and $q
\geq 0$ is an integer, then there exists
\[
  {}_{R}(\Sigma^i k)
  \stackrel{\kappa_q}{\rightarrow} M \langle q \rangle
  \stackrel{\mu_q}{\rightarrow} M \langle q-1 \rangle
  \stackrel{\mu_{q-1}}{\rightarrow} \cdots
  \stackrel{\mu_1}{\rightarrow} M \langle 0 \rangle
\]
where the $M \langle i \rangle$ are indecomposable objects of
$\Dcatc(R)$ and the $\mu_i$ are irreducible morphisms in $\Dcatc(R)$,
such that $\mu_1 \circ \cdots \circ \mu_q \circ \kappa_q \neq 0$.

Using induction on $q$, first let $q = 0$.  Let $F$ be a minimal
semi-free resolution of the dual $\dual\!M \langle 0 \rangle$.  Then
\begin{align*}
  \Homology(\RHom_R(k,M \langle 0 \rangle))
  & \cong \Homology(\RHom_{R^{\opp}}(\dual\!M \langle 0 \rangle,k)) \\
  & \stackrel{\rm (a)}{\cong} \Homology(\Hom_{R^{\opp}}(F,k)) \\
  & \stackrel{\rm (b)}{\cong} \Hom_{(R^{\opp})^{\natural}}(F^{\natural},k^{\natural}) \\
  & \stackrel{\rm (c)}{\not\cong} 0.
\end{align*}
Here (a) and (b) are by Lemma \ref{lem:semi-free}, parts (2) and (5).
(c) is because $M \langle 0 \rangle$ is indecomposable hence has
non-zero cohomology; this implies that $\dual\!M \langle 0 \rangle$
has non-zero cohomology, and then $F$ is non-trivial semi-free whence
$F^{\natural}$ is a non-trivial graded free module.

It follows from the displayed formula that there is a non-zero
morphism
\[
  {}_{R}(\Sigma^i k) \stackrel{\kappa_0}{\rightarrow} M \langle 0 \rangle
\]
for some $i$.

Now let $q \geq 1$ and suppose that
\[
  {}_{R}(\Sigma^i k)
  \stackrel{\kappa_{q-1}}{\rightarrow} M \langle q-1 \rangle
  \stackrel{\mu_{q-1}}{\rightarrow} M \langle q-2 \rangle
  \stackrel{\mu_{q-2}}{\rightarrow} \cdots
  \stackrel{\mu_1}{\rightarrow} M \langle 0 \rangle
\]
has already been found with the desired properties.  Let $\tau M
\langle q-1 \rangle \rightarrow X \langle q \rangle
\stackrel{\mu_{q}^{\prime}}{\rightarrow} M \langle q-1 \rangle
\rightarrow$ be an AR triangle in $\Dcatc(R)$.  By Proposition
\ref{pro:AR_triangles_preserved} it is also an AR triangle in
$\Dcatf(R)$.  Since ${}_{R}k$ is not in $\Dcatc(R)$, see Setup
\ref{set:local}, it is clear that $\kappa_{q-1}$ is not a split
epimorphism, so it factors through $\mu_{q}^{\prime}$.  Now I can get
the situation claimed in the lemma by letting $M \langle q \rangle$ be
a suitable indecomposable summand of $X \langle q \rangle$ and $\mu_q$
the restriction of $\mu^{\prime}_q$ to $M \langle q \rangle$.
\end{proof}

\begin{lemma}
\label{lem:varphi_unbounded}
The function $\varphi$ is unbounded on $C$. 
\end{lemma}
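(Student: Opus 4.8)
The plan is to argue by contradiction, playing off the tension between Lemma \ref{lem:zero} — long chains of non-isomorphisms joining objects with small $\varphi$-value compose to zero — and Lemma \ref{lem:non-zero} — arbitrarily long chains of irreducible morphisms with non-zero composition exist. Concretely, suppose $\varphi$ is bounded on $C$, say $\varphi(M) \leq B$ for every vertex $[M]$ of $C$, where $B$ is a (positive) integer. Put $p = B \cdot \dim_k R$, a positive integer, and $q = 2^p - 1$; the point of this choice is that $B \leq p/\dim_k R$, and that a chain of $q$ morphisms is exactly a chain of $2^p$ objects, matching the shape of the hypothesis of Lemma \ref{lem:zero}.

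Next I would fix an arbitrary vertex $[M\langle 0 \rangle]$ of $C$, viewed as an indecomposable object of $\Dcatc(R)$, and apply Lemma \ref{lem:non-zero} to it with the integer $q$. This produces indecomposable objects $M\langle q \rangle, \ldots, M\langle 0 \rangle$ of $\Dcatc(R)$ and irreducible morphisms
\[
  M\langle q \rangle \rightarrow M\langle q-1 \rangle \rightarrow \cdots \rightarrow M\langle 0 \rangle
\]
whose composition is non-zero. Each of these irreducible morphisms is a morphism between indecomposable objects, hence is neither a split monomorphism nor a split epimorphism, so in particular not an isomorphism; and each corresponds to an arrow of $\Gamma$. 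Therefore $[M\langle 0 \rangle], \ldots, [M\langle q \rangle]$ are joined by a path in $\Gamma$, so they all lie in the single component $C$, and consequently $\varphi(M\langle i \rangle) \leq B \leq p/\dim_k R$ for every $i$.

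Finally I would apply Lemma \ref{lem:zero} to the chain $M\langle 2^p - 1 \rangle \rightarrow \cdots \rightarrow M\langle 0 \rangle$ of non-isomorphisms: its hypotheses are met verbatim, so its composition is zero, contradicting the conclusion of the previous paragraph. Hence $\varphi$ is unbounded on $C$. The only step that is not pure bookkeeping is the observation that the chain supplied by Lemma \ref{lem:non-zero} genuinely lives inside $C$, so that the assumed bound $B$ applies to all of its terms; this is exactly why one needs that irreducible morphisms between indecomposables are non-isomorphisms and give rise to arrows of $\Gamma$. Everything else is choosing $p$ large enough relative to $B$ and $\dim_k R$ so that the two lemmas speak about chains of the same length.
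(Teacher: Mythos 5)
Your proof is correct and is exactly the paper's argument (which is stated there in a single sentence): assume boundedness, choose $p$ so that the bound is at most $p/\dim_k R$, and pit Lemma \ref{lem:zero} against Lemma \ref{lem:non-zero}. Your elaboration of why the chain from Lemma \ref{lem:non-zero} lies in $C$ and consists of non-isomorphisms is a faithful filling-in of the details the paper leaves implicit.
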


\begin{proof}
If $\varphi$ were bounded on $C$ then Lemma \ref{lem:zero} would apply
to sufficiently long sequences of morphisms between indecomposable
objects with vertices in $C$, but this would make impossible the
situation established in Lemma \ref{lem:non-zero}.
\end{proof}

Recall that the Cartan matrix $c$ of the labelled directed tree $T$ is
a matrix with rows and columns indexed by the vertices of $T$.  If $s$
and $t$ are vertices, then
\[
  c_{st} =
  \left\{
    \begin{array}{cl}
      2 & \mbox{if $s=t$}, \\
      -a_{\mu} & \mbox{if there is an arrow $s \stackrel{\mu}{\rightarrow} t$}, \\
      -b_{\nu} & \mbox{if there is an arrow $t \stackrel{\nu}{\rightarrow} s$}, \\
      0 & \mbox{if $s \neq t$ and $s$ and $t$ are not connected by an arrow};
    \end{array}
  \right.
\]
cp.\ \cite[sec.\ 4.5]{BensonI}.
The function $f$ on the vertices of $T$ is called additive if it
satisfies $\sum_s c_{st}f(s) = 0$ for each $t$, that is,
\begin{equation}
\label{equ:g}
  2f(t) - \sum_{\mu : s \rightarrow t}a_{\mu}f(s)
        - \sum_{\nu : t \rightarrow u}b_{\nu}f(u) = 0
\end{equation}
for each $t$, where the sums are over all arrows in $T$ into $t$ and
out of $t$.  Indeed:

\begin{proposition}
\label{pro:f}
The function $f$ is additive and unbounded on $T$.
\end{proposition}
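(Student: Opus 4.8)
The statement has two parts: $f$ is additive on $T$, and $f$ is unbounded on $T$. The unboundedness is immediate from what has already been proved: by Lemma \ref{lem:varphi_unbounded}, $\varphi$ is unbounded on the component $C$, and since $f$ on $T$ is, by construction in Definition \ref{def:varphi}, the function induced by $\varphi$ via the covering $\BZ T \to \BZ T/\Pi \cong C$ (restricted to the copy $\{(0,t)\}$ of $T$), the function $\varphi$ on vertices of $C$ is covered by $f$ composed with $\tau$-shifts on $\BZ T$. Since $\varphi$ is $\tau$-invariant on $\Gamma$ by Lemma \ref{lem:varphi}(2), its value on a vertex $\Pi(0,t)$ of $C$ equals $f(t)$; so unboundedness of $\varphi$ on $C$ forces unboundedness of $f$ on $T$.

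The additivity is the substantive part. The plan is to verify Equation \eqref{equ:g} at an arbitrary vertex $t$ of $T$ by pulling everything back to $\Gamma$. Fix a vertex $[N]$ of $\Gamma$ corresponding to $t$ (concretely $[N] = \Pi(0,t)$). First I would rewrite the right-hand side of Lemma \ref{lem:varphi}(6): it reads $\sum_{\mu:[M]\to[N]} \alpha_\mu \varphi(M) = \varphi(\tau N) + \varphi(N) = 2\varphi(N)$, the last equality by Lemma \ref{lem:varphi}(2). The arrows $[M] \to [N]$ in $\Gamma$ split, via the covering, into two types corresponding to arrows into $t$ and arrows out of $t$ in $T$: an arrow $s \stackrel{\mu}{\to} t$ in $T$ lifts to an arrow $[M] \to [N]$ in $\Gamma$ (with $[M]$ corresponding to $s$) carrying label $(a_\mu, b_\mu)$, while an arrow $t \stackrel{\nu}{\to} u$ in $T$ gives, via Lemma \ref{lem:varphi}(4) applied in $\Gamma$, a "reversed" arrow $\tau[U] \to [N]$ whose first label component equals $b_\nu$. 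Using $\varphi(\tau U) = \varphi(U) = f(u)$ and $\varphi(M) = f(s)$, the left side of Lemma \ref{lem:varphi}(6) becomes exactly $\sum_{\mu:s\to t} a_\mu f(s) + \sum_{\nu:t\to u} b_\nu f(u)$. Equating with $2\varphi(N) = 2f(t)$ gives Equation \eqref{equ:g}.

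The main obstacle is the bookkeeping of how arrows of $\Gamma$ with target $[N]$ correspond to arrows of $T$ incident to $t$, and making sure each arrow is counted exactly once with the correct label component. Concretely one must check: (i) every arrow $[M] \to [N]$ in $\Gamma$ arises as a lift of a unique arrow of $T$ incident with $t$, either as $s \to t$ (direct lift) or as the $\tau$-shift of the reversed partner of some $t \to u$ (using Lemma \ref{lem:varphi}(4) and \ref{lem:varphi}(5) to identify labels and translate); (ii) in the first case the $\alpha$-label of the $\Gamma$-arrow is $a_\mu$, and in the second case it is $b_\nu$ — the latter precisely because of the swap $(\alpha_\nu,\beta_\nu) = (\beta_\mu,\alpha_\mu)$ in Lemma \ref{lem:varphi}(4). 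Once this correspondence is pinned down, additivity drops out of Lemma \ref{lem:varphi}(6) and the $\tau$-invariance of $\varphi$ with no further computation; the unboundedness, as noted, is then a one-line consequence of Lemma \ref{lem:varphi_unbounded}.
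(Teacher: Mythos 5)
Your proposal is correct and follows essentially the same route as the paper: both arguments reduce Equation \eqref{equ:g} to the mesh relation of Lemma \ref{lem:varphi}(6) by using the covering $\BZ T \to \BZ T/\Pi$ to match the arrows of $T$ incident with $t$ bijectively with the arrows of $C$ into $\Pi(0,t)$, with Lemma \ref{lem:varphi}(4) supplying the label swap $\beta_\nu = \alpha$ on the reversed arrows and Lemma \ref{lem:varphi}(2) supplying the $\tau$-invariance of $\varphi$; the unboundedness is handled identically via Lemma \ref{lem:varphi_unbounded}. The only difference is cosmetic: you start from Lemma \ref{lem:varphi}(6) and rewrite towards \eqref{equ:g}, while the paper rewrites \eqref{equ:g} down to the form of Lemma \ref{lem:varphi}(6).
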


\begin{proof}
Using Definition \ref{def:varphi}, the left hand side of Equation
\eqref{equ:g} can be rewritten
\[
  2\varphi(\Pi(0,t))
  - \sum_{\mu : s \rightarrow t} \alpha_{\Pi(0,s) \rightarrow \Pi(0,t)} \varphi(\Pi(0,s))
  - \sum_{\nu : t \rightarrow u} \beta_{\Pi(0,t) \rightarrow \Pi(0,u)} \varphi(\Pi(0,u)).
\]
The translation of $\BZ T/\Pi$ is given by $\tau(\Pi(p,t)) =
\Pi(p+1,t)$.  To each arrow $\Pi(0,t) \rightarrow \Pi(0,u)$
corresponds an arrow $\tau(\Pi(0,u)) \rightarrow \Pi(0,t)$, that is,
$\Pi(1,u) \rightarrow \Pi(0,t)$.  Lemma \ref{lem:varphi}(4) gives
$\beta_{\Pi(0,t) \rightarrow \Pi(0,u)} = \alpha_{\Pi(1,u) \rightarrow
\Pi(0,t)}$.  Lemma \ref{lem:varphi}(2) gives $\varphi(\Pi(0,u)) =
\varphi(\tau(\Pi(0,u))) = \varphi(\Pi(1,u))$, and also implies that
$2\varphi(\Pi(0,t)) = \varphi(\Pi(0,t)) + \varphi(\tau(\Pi(0,t)))$.

Substituting all this into the previous expression gives
\begin{align*}
  \lefteqn{\varphi(\Pi(0,t)) + \varphi(\tau(\Pi(0,t)))} & \\
  & \hspace{10ex} - \sum_{\mu : s \rightarrow t} \alpha_{\Pi(0,s) \rightarrow \Pi(0,t)} \varphi(\Pi(0,s))
  - \sum_{\nu : t \rightarrow u} \alpha_{\Pi(1,u) \rightarrow \Pi(0,t)} \varphi(\Pi(1,u)).
\end{align*}
Recall that the sums are over all the arrows in $T$ into $t$ and out
of $t$.  From the construction of the repetitive quiver $\BZ T$, this
means that between them, the sums can be viewed as being over all the
arrows into $(0,t)$ in $\BZ T$.  However, the projection $\BZ T
\rightarrow \BZ T/\Pi$ is a covering so induces a bijection between
the arrows in $\BZ T$ into $(0,t)$ and the arrows in $\BZ T/\Pi$ into
$\Pi(0,t)$.  So in fact, the previous expression can be rewritten
\[
  \varphi(\Pi(0,t)) + \varphi(\tau(\Pi(0,t)))
  - \sum_{m \rightarrow \Pi(0,t)} \alpha_{m \rightarrow \Pi(0,t)} \varphi(m)
\]
where the sum is over all arrows in $\BZ T/\Pi$ into $\Pi(0,t)$.  But
identifying $\BZ T/\Pi$ and $C$, the displayed expression is zero by
Lemma \ref{lem:varphi}(6), so $f$ is additive.

Since $f(t) = \varphi(\Pi(0,t))$ by Definition \ref{def:varphi} and
$\varphi(\Pi(p,t)) = \varphi(\tau^p\Pi(0,t)) = \varphi(\Pi(0,t))$ by
Lemma \ref{lem:varphi}(2), if $f$ were bounded on $T$ then $\varphi$
would be bounded on $C$.  But this is false by Lemma
\ref{lem:varphi_unbounded}.
\end{proof}

Recall that the graph $A_{\infty}$ is
\[
  \def\objectstyle{\scriptstyle}
  \vcenter{
  \xymatrix @!0 @+0.25pc {
    1 \ar@{-}[rr] & & 2 \ar@{-}[rr] & & 3 \ar@{-}[rr] & & 4 \ar@{-}[rr] & & 5 \ar@{-}[rr]&&{\textstyle \cdots}\\
                         }
          },
\]
where a convenient numbering of the vertices has been chosen.  A
quiver of type $A_{\infty}$ is an orientation of this graph.  The
repetitive quiver $\BZ A_{\infty}$ does not depend on the orientation;
with a standard numbering of the vertices it is
\[
  \def\objectstyle{\scriptstyle}
  \vcenter{
  \xymatrix @!0 @+0.5pc {
    & & & \vdots & & & & \vdots & & & \\
    & *{(3,5)} \ar[dr] & & *{(2,5)} \ar[dr] & & *{(1,5)} \ar[dr] & & *{(0,5)} \ar[dr] & & *{(-1,5)} & \\
    & & *{(2,4)} \ar[dr] \ar[ur] & & *{(1,4)} \ar[dr] \ar[ur] & & *{(0,4)} \ar[dr] \ar[ur] & & *{(-1,4)} \ar[dr] \ar[ur] & & \\
    {\textstyle \cdots} & *{(2,3)} \ar[dr] \ar[ur] & *{} & *{(1,3)} \ar[dr] \ar[ur] & *{} & *{(0,3)} \ar[dr] \ar[ur] & *{} & *{(-1,3)} \ar[dr] \ar[ur] & *{} & *{(-2,3)} & {\textstyle \cdots}.\\
    & & *{(1,2)} \ar[dr] \ar[ur] & & *{(0,2)} \ar[dr] \ar[ur] & & *{(-1,2)} \ar[dr] \ar[ur] & & *{(-2,2)} \ar[dr] \ar[ur] & & \\
    & *{(1,1)} \ar[ur] & & *{(0,1)} \ar[ur] & & *{(-1,1)} \ar[ur] & & *{(-2,1)} \ar[ur] & & *{(-3,1)} & \\
                      }
          }
\]
The translation acts by $\tau(p,t) = (p+1,t)$.

\begin{theorem}
\label{thm:quiver_local_structure}
\begin{enumerate}
  \item  The component $C$ of the AR quiver $\Gamma$ of $\Dcatc(R)$ is
    isomorphic to $\BZ A_{\infty}$ as a stable translation quiver.

  \item  Each label $(\alpha_{\mu},\beta_{\mu})$ on $\Gamma$ is equal
    to $(1,1)$. 

  \item  If the function $\varphi$ has value $\varphi_1$ on the edge of
    $C \cong \BZ A_{\infty}$, then it has value $n\varphi_1$ on the
    $n$'th horizontal row of vertices in $C$.
\end{enumerate}
\end{theorem}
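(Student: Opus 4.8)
The plan is to combine two facts already in hand, namely the Riedtmann presentation $C \cong \BZ T/\Pi$ for a directed tree $T$ and admissible group $\Pi$ (the structure theorem used just above) and the additivity and unboundedness of the induced function $f$ on $T$ (Proposition \ref{pro:f}), with the classification of valued trees that admit an additive function, so as to force $T \cong A_{\infty}$ with trivial valuation. One then rules out a non-trivial $\Pi$ by non-periodicity, which gives part~(1), and reads off parts~(2) and~(3) from the combinatorics of $\BZ A_{\infty}$. The only genuinely non-routine ingredient is the tree classification; all the rest is bookkeeping with the properties of $\varphi$ collected in Lemma \ref{lem:varphi}.

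For part~(1), I would first observe that $f$ is strictly positive: an indecomposable object $M$ of $\Dcatc(R)$ is non-zero, so its minimal semi-free resolution is a non-trivial graded free module, whence $\varphi(M) \geq 1$ by Lemma \ref{lem:varphi}(1), that is, $f(t) = \varphi(\Pi(0,t)) \geq 1$ at every vertex $t$ of $T$. Then I would invoke the Happel--Preiser--Ringel classification, as recorded in \cite[sec.\ 4.5]{BensonI}: among connected valued trees, the unique one carrying a strictly positive \emph{unbounded} additive function is $A_{\infty}$ with trivial valuation. (Dynkin diagrams admit only the zero additive function, their Cartan form being positive definite; Euclidean diagrams are finite; and on each of $D_{\infty}$, the two-sided line $A_{\infty}^{\infty}$, $B_{\infty}$, and $C_{\infty}$, the additivity relation at the branch point or at the valued edge forces $f$ to be eventually constant, hence bounded.) So $T \cong A_{\infty}$ and all labels $(a_{\mu},b_{\mu})$ on $T$ equal $(1,1)$. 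Finally, $A_{\infty}$ has no non-trivial automorphisms, so the automorphism group of the translation quiver $\BZ A_{\infty}$ is $\langle\tau\rangle$ and $\Pi = \langle\tau^{p}\rangle$ for some $p \geq 0$; if $p \geq 1$ then every vertex of $C = \BZ A_{\infty}/\Pi$ is fixed by $\tau^{p}$, contradicting Lemma \ref{lem:no_loops}(1). Hence $\Pi$ is trivial and $C \cong \BZ A_{\infty}$.

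For parts~(2) and~(3), note first that the argument of part~(1) applies to every component of $\Gamma$, so that every component is isomorphic to $\BZ A_{\infty}$; this is what is needed for~(2), which concerns all of $\Gamma$. Now $\tau$ preserves labels by Lemma \ref{lem:varphi}(5), and the arrow $[M]\to[N]$ carries the transpose of the label of the corresponding arrow $\tau[N]\to[M]$ by Lemma \ref{lem:varphi}(4); hence every arrow of $\BZ A_{\infty}$ is, up to $\tau$-translation, either an arrow coming directly from an edge of $T = A_{\infty}$ or the arrow corresponding to such, and in either case its label is dictated by the valuation of that edge, hence is $(1,1)$. This proves~(2). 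For~(3), $\varphi$ is constant on each $\tau$-orbit, that is, on each horizontal row of $C \cong \BZ A_{\infty}$, by Lemma \ref{lem:varphi}(2); write $\varphi_{n}$ for its value on the $n$th row, so that $f(t) = \varphi_{t}$ under the identification carrying $\Pi(0,t)$ to the $t$th row. Additivity of $f$ on $A_{\infty}$ with trivial valuation then reads $2\varphi_{1} = \varphi_{2}$ at the edge vertex and $2\varphi_{t} = \varphi_{t-1} + \varphi_{t+1}$ at each interior vertex, and this linear recursion has the unique solution $\varphi_{n} = n\varphi_{1}$, which is~(3). The main obstacle, as noted, is marshalling the standard subadditivity/additivity dichotomy into the sharp statement that unboundedness pins down $A_{\infty}$ with trivial valuation.
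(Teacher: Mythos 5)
Your proposal is correct and follows essentially the same route as the paper: Riedtmann presentation plus the additive unbounded function $f$ from Proposition \ref{pro:f}, the Happel--Preiser--Ringel classification (the paper cites \cite[thm.\ 4.5.8(iv)]{BensonI}) to force $T = A_{\infty}$ with trivial labels, triviality of $\Pi$ via Lemma \ref{lem:no_loops}(1), and the linear recursion for part (3). The extra details you supply (positivity of $f$, the explicit transfer of labels via Lemma \ref{lem:varphi}(4)--(5)) are correct elaborations of steps the paper leaves implicit.
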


\begin{proof}
By Proposition \ref{pro:f}, there is an additive unbounded function
$f$ on the labelled tree $T$.  Hence $T$ is of type $A_{\infty}$ with
all labels equal to $(1,1)$ by \cite[thm.\ 4.5.8(iv)]{BensonI}.  This
proves (2), and it also means that to prove (1), it is sufficient to
show that $\Pi$ acts trivially on $\BZ A_{\infty}$.

But if it did not, then there would exist a vertex $m$ on the edge of
$\BZ A_{\infty}$ and a $g$ in $\Pi$ such that $gm \neq m$.  The vertex
$gm$ would again be on the edge, and so it would have the form $\tau^p
m$ for some $p \neq 0$.  But then $m$ and $\tau^p m$ would get
identified in $\BZ A_{\infty}/\Pi$, and hence $\Pi m$ would be a fixed
point in $\BZ A_{\infty}/\Pi$ of $\tau^p$, that is, a fixed point in
$C$ of $\tau^p$.  But this is impossible by Lemma
\ref{lem:no_loops}(1).

Finally, it is a standard consequence of additivity that if the
function $f$ has value $f(1) = f_1$ at the first vertex of
$A_{\infty}$, then it has value $f(n) = nf_1$ at the $n$th vertex.
Since $\varphi(\Pi(p,n)) = \varphi(\tau^p(\Pi(0,n))) =
\varphi(\Pi(0,n)) = f(n)$, the claim (3) on $\varphi$ follows.
\end{proof}

\section{Report on work by Karsten Schmidt}
\label{sec:global}

In this section, the study of the AR quiver $\Gamma$ of $\Dcatc(R)$ is
continued, and some aspects of the global structure are revealed.  If
$\dim_k \Homology\!R = 2$ then $\Gamma$ has precisely $d-1$
components.  On the other hand, for Gorenstein algebras with $\dim_k
\Homology\!R \geq 3$, there are infinitely many components.  Often,
these even form families which are indexed by projective manifolds,
and these manifolds can be of arbitrarily high dimension.

With the exception of Theorem \ref{thm:spheres} which is essentially
in \cite{artop}, the results of this section are due to Karsten
Schmidt; see \cite[thm.\ 4.1]{Schmidt}.

Only a sketch is given of the proof of the next theorem; for more
information, see \cite[sec.\ 8]{artop}.

\begin{theorem}
\label{thm:spheres}
If $\dim_k \Homology\!R = 2$ then $\Dcatc(R)$ has AR triangles, and
the AR quiver of $\Dcatc(R)$ has $d-1$ components, each isomorphic to
$\BZ A_{\infty}$.
\end{theorem}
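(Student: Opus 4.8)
The plan is to treat the two assertions separately: that $\Dcatc(R)$ has AR triangles, and that $\Gamma$ has precisely $d-1$ components, each $\cong \BZ A_{\infty}$. The first is immediate. From $\dim_k\Homology\!R = 2$ together with $\Homology^0\!R = k$ one gets that $\Homology^d\!R$ is one-dimensional and $\Homology^i\!R = 0$ otherwise, so as a graded algebra $\Homology\!R \cong k[\xi]/(\xi^2)$ with $\xi$ in degree $d$ (the relation $\xi^2 = 0$ is forced, as $\Homology^{2d}\!R = 0$ because $2d > d \geq 2$). This is a graded Frobenius algebra, hence $\dual\!\Homology\!R \cong \Sigma^d\Homology\!R$ both as left and as right $\Homology\!R$-modules, which is condition (2) of Theorem~\ref{thm:Gorenstein}; so $R$ is Gorenstein. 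Then $\Dcatc(R)$ has AR triangles by Theorem~\ref{thm:R}, and every component of $\Gamma$ is isomorphic to $\BZ A_{\infty}$ by Theorem~\ref{thm:quiver_local_structure}.

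For the count I would first reduce to the formal case. A degree count shows that the only candidate for a non-vanishing Massey product on $\Homology\!R \cong k[\xi]/(\xi^2)$ is an $n$-fold product of copies of $\xi$, and this lands in degree $n(d-1)+2$, a degree in which $\Homology\!R$ vanishes for every $n \geq 3$. So $R$ is formal, and since quasi-isomorphic DG algebras have equivalent derived categories we may assume $R = k[\xi]/(\xi^2)$ with zero differential. Now $R^{\geq 1} = R^d = k\xi$, so a minimal semi-free resolution $F$ of a compact object has $F^{\natural} = \bigoplus_a \Sigma^a R^{(\beta_a)}$ with finitely many nonzero $\beta_a$ and differential $\partial = \xi B$, where $B$ is a matrix of scalars carrying the $a$-summand into the $(a+d-1)$-summand; the identity $\partial^2 = 0$ is automatic because $\xi^2 = 0$. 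Such data is precisely a finite-support representation of the bi-infinite quiver $\cdots \to \bullet \to \bullet \to \cdots$ with an arrow joining the levels $a$ and $a+d-1$, which decomposes according to the residue of $a$ modulo $d-1$ and then, on each residue class, into interval representations. Since a direct-sum decomposition of an object of $\Dcatc(R)$ is reflected in its minimal semi-free resolution, the indecomposable objects of $\Dcatc(R)$ are exactly the DG modules $M\langle b,\ell\rangle$ with $F^{\natural} = \bigoplus_{j=0}^{\ell-1}\Sigma^{b-j(d-1)}R$ and differential a single chain of $\xi$-multiplications, for $b \in \BZ$ and $\ell \geq 1$. Note $M\langle b,1\rangle = \Sigma^b R$ and $\varphi(M\langle b,\ell\rangle) = \ell$ by Lemma~\ref{lem:varphi}(1).

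It remains to organize the $M\langle b,\ell\rangle$ into components. Remark~\ref{rmk:tau} gives $\tau(\Sigma^b R) = \Sigma^{-1}(\dual\!R\LTensor_R\Sigma^b R) \cong \Sigma^{b+d-1}R$, using $\dual\!R \cong \Sigma^d R$ from Theorem~\ref{thm:Gorenstein}(3); and the bricks $\Sigma^b R$, having $\varphi = 1$, lie on the edges of their components by Theorem~\ref{thm:quiver_local_structure}(3). Since $\Sigma^m R \cong \Sigma^{m'}R$ forces $m = m'$, the set $\{\Sigma^b R : b \in \BZ\}$ is the disjoint union of the $d-1$ distinct $\tau$-orbits indexed by $b \bmod (d-1)$, so it meets $d-1$ distinct components. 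Finally, the AR triangle with right-hand term $M\langle b,\ell\rangle$ has middle term $M\langle b+d-1,\ell+1\rangle \oplus M\langle b,\ell-1\rangle$ (with the convention $M\langle b,0\rangle = 0$), so the residue class $\{M\langle b,\ell\rangle : b \equiv r \pmod{d-1}\}$ is exactly one component, namely a $\BZ A_{\infty}$ with the shifts of $R$ of that residue along the edge. By the classification these $d-1$ components exhaust $\Gamma$, which proves the theorem.

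The main obstacle is the classification of the indecomposable compact objects: one must make the reduction to $R = k[\xi]/(\xi^2)$ precise, pin down the shape of minimal semi-free resolutions over this ring, invoke the elementary classification of finite-support representations of an $A_{\infty}$-type quiver into intervals, and translate those intervals back into DG modules. Once that is done, matching the resulting family residue-class-by-residue-class with the abstract $\BZ A_{\infty}$ components needs only the AR-triangle computation above; everything else is bookkeeping with Theorems~\ref{thm:Gorenstein}, \ref{thm:R} and \ref{thm:quiver_local_structure} and Remark~\ref{rmk:tau}.
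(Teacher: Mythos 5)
Your proof is correct, and its second half takes a genuinely different route from the paper's. The opening is identical: $\dim_k\Homology\!R=2$ forces $\Homology\!R\cong k[\xi]/(\xi^2)$ with $\xi$ in degree $d$, which is condition (2) of Theorem \ref{thm:Gorenstein}, so Theorems \ref{thm:R} and \ref{thm:quiver_local_structure} give the AR triangles and the $\BZ A_{\infty}$ shape of every component. For formality the paper is slightly more elementary than you are: it truncates so that $R^{>d}=0$, after which the map $k[X]/(X^2)\rightarrow R$ sending $X$ to a cycle representing the top class is literally a well-defined quasi-isomorphism (no Massey-product or $A_{\infty}$ considerations needed, though your degree count is the rigorous core of the same fact). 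The real divergence is the classification of indecomposables. The paper passes to the Koszul dual $S=k[Y]$ with $Y$ in degree $-d+1$ and uses the adjoint pair $k\LTensor_R-$ and $\RHom_{S^{\opp}}(k,-)$ to identify $\Dcatc(R)$ with $\Dcatf(S^{\opp})$, then classifies finite dimensional graded modules over the hereditary graded algebra $k[Y]$ as the $\Sigma^j k[Y]/(Y^{m+1})$. You stay on the $R$-side and read a minimal semi-free resolution directly as a finite-support representation of a linearly oriented quiver of type $A_{\infty}^{\infty}$ whose arrows shift the suspension index by $d-1$, with $\partial^2=0$ automatic from $\xi^2=0$; interval representations then give the same list of indecomposables. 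These are two faces of the same duality: your version avoids constructing the adjunction and checking that it restricts to an equivalence, at the cost of having to note that morphisms of DG modules are richer than morphisms of the associated quiver representations --- harmless here, since only direct-sum decompositions are needed and these transfer in both directions, and since a decomposition of a compact object in $\Dcatc(R)$ is reflected in its minimal semi-free resolution. Your endgame --- $\tau(\Sigma^bR)\cong\Sigma^{b+d-1}R$, the value $\varphi=1$ pinning the $\Sigma^bR$ to the edges of their components, and the explicit mesh $M\langle b+d-1,\ell+1\rangle\oplus M\langle b,\ell-1\rangle$ --- spells out the ``exercise in AR theory'' that the paper leaves to the reader, and is at least as complete as the paper's own sketch.
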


\begin{proof}
The cohomology of $R$ in low degrees is $\Homology^0\!R = k$ and
$\Homology^1\!R = 0$.  Since $\dim_k \Homology\!R = 2$, it follows
that the only other non-zero cohomology is $\Homology^d\!R = k$, and
it is easy to check that $R$ therefore satisfies the conditions of
Theorem \ref{thm:Gorenstein}(2) so $R$ is Gorenstein.  Theorem
\ref{thm:R} says that $\Dcatc(R)$ has AR triangles, and Theorem
\ref{thm:quiver_local_structure}(1) says that each component of the AR
quiver of $\Dcatc(R)$ is isomorphic to $\BZ A_{\infty}$.

Replacing $R$ with a quasi-isomorphic truncation, it can be supposed
that $R^{>d} = 0$, see Lemma \ref{lem:truncations}(3).  Pick a cycle
$x$ in $R^d$ with non-zero cohomology class.  The graded algebra
$k[X]/(X^2)$ with $X$ in cohomological degree $d$ can be viewed as a
DG algebra with zero differential, and the map $k[X]/(X^2) \rightarrow
R$ sending $X$ to $x$ is a quasi-isomorphism, so $R$ can be replaced
with $k[X]/(X^2)$.

Now consider the algebra $S = k[Y]$ with $Y$ in cohomological degree
$-d + 1$, viewed as a DG algebra with zero differential.  The DG
module $k$ can be viewed as a DG right-$R$-right-$S$-module in an
obvious way, and it induces adjoint functors
\[
  \xymatrix{
    \Dcat(S^{\opp}) \ar@<-1ex>[rr]_{\RHom_{S^{\opp}}(k,-)}
    & & \Dcat(R). \ar@<-1ex>[ll]_{k \LTensor_R -}
           }
\]
The upper functor clearly sends ${}_{R}R$ to $k_S$, and by computing
with a semi-free resolution it can be verified that the lower functor
sends $k_S$ to ${}_{R}R$.  Hence the functors restrict to
quasi-inverse equivalences on the subcategories of objects which are
finitely built, respectively, from $k_S$ and ${}_{R}R$.  These
subcategories are precisely $\Dcatf(S^{\opp})$ and $\Dcatc(R)$.

So it is enough to show that the AR quiver of $\Dcatf(S^{\opp})$ has
$d-1$ components.  However, $S$ is $k[Y]$ equipped with zero
differential, so $\Homology\!S$ is just $k[Y]$ viewed as a graded
algebra.  This polynomial algebra in one variable has global dimension
$1$, and this makes it possible to prove that if $M$ is a DG
right-$S$-module, then $M$ is quasi-isomorphic to $\Homology\!M$
equipped with zero differential.

This reduces the classification of objects of $\Dcatf(S^{\opp})$ to
the classification of graded right-$\Homology\!S$-modules.  However,
using again that $\Homology\!S = k[Y]$ is a polynomial algebra in one
variable, one shows that its indecomposable finite dimensional graded
right-modules are precisely
\[
  \Sigma^j k[Y]/(Y^{m+1})
\]
for $j$ in $\BZ$ and $m \geq 0$.  Viewing these as DG
right-$S$-modules with zero differential gives the indecomposable
objects of $\Dcatf(S^{\opp})$, and knowing the indecomposable objects,
it is an exercise in AR theory to compute the AR triangles, find the
AR quiver, and verify that it has $d-1$ components.
\end{proof}

\begin{setup}
In the rest of this section, the setup of Section \ref{sec:local} will
be kept: $R$ is Gorenstein with $\dim_k \Homology\!R \geq 2$.

The category $\Dcatc(R)$ has AR triangles by Theorem \ref{thm:R}, and
${}_{R}k$ is not in $\Dcatc(R)$ by Theorem \ref{thm:k}.

The AR quiver $\Gamma(\Dcatc(R))$ will abbreviated to $\Gamma$.
\end{setup}

Since $\Homology^0\!R \cong k$ and $\Homology^1\!R = 0$, by Theorem
\ref{thm:Gorenstein}(2) it must be the case that $\Homology^{d-1}\!R =
0$ and $\Homology^d\!R \cong k$.  By definition, $d$ is the highest
degree in which $R$ has non-zero cohomology; suppose that $e \not\in \{
0,d \}$ is another degree with $\Homology^e\!R \neq 0$ and observe
that then
\[
  2 \leq e \leq d-2
\]
and $d \geq 4$.

Let $X$ be a minimal semi-free DG left-$R$-module whose semi-free
filtration contains only finitely many copies of (de)suspensions of
$R$.  In particular, Lemma \ref{lem:semi-free}(4) says that $X$ is in
$\Dcatc(R)$; suppose that it is indecomposable in that category.  Let
$i \geq 2$ and consider the following cases.

\begin{description}

  \item[Case (1).] Suppose that
\[
  \mbox{$\inf X = 0 \;\;$ and $\;\; \sup X = i$.}
\]
A non-zero cohomology class in $\Homology^i\!X$ permits a
non-zero morphism $\Sigma^{-i}R \stackrel{g}{\rightarrow} X$;
denoting the mapping cone by $X(1)$, there is a distinguished triangle
\begin{equation}
\label{equ:1}
  \Sigma^{-i}R \stackrel{g}{\rightarrow} X \rightarrow X(1) \rightarrow.
\end{equation}

  \item[Case (2).]  Suppose that
\[
  \mbox{$\inf X = 0, \;\; \sup X = i, \;\;$
        and $\Homology^{i-d+e}X \neq 0$.}
\]
A non-zero cohomology class in $\Homology^{i-d+e}\!X$ permits a
non-zero morphism $\Sigma^{-i+d-e}R \stackrel{h}{\rightarrow} X$;
denoting the mapping cone by $X(2)$, there is a distinguished triangle
\begin{equation}
\label{equ:2}
  \Sigma^{-i+d-e}R \stackrel{h}{\rightarrow} X \rightarrow X(2) \rightarrow.
\end{equation}

  \item[Case (2${}_{\alpha}$).]  In Case (2), suppose moreover
  that $\Homology^i\!X \cong k$ and that scalar multiplication induces
  a non-degenerate bilinear form
\begin{equation}
\label{equ:b}
  \Homology^{d-e}(R) \times \Homology^{i-d+e}(X)
  \rightarrow \Homology^i(X) \cong k.
\end{equation}

The morphism $\Sigma^{-i+d-e}R \stackrel{h}{\rightarrow} X$
corresponds to an element $\alpha$ in $\Homology^{i-d+e}\!X$; denote
$h$ by $h_{\alpha}$ and $X(2)$ by $X(2_{\alpha})$.

\end{description}

It follows from the mapping cone construction that $X(1)$, $X(2)$, and
$X(2_{\alpha})$ are again minimal semi-free DG left-$R$-modules whose
semi-free filtrations contain only fi\-ni\-te\-ly many copies of
(de)suspensions of $R$.

\begin{lemma}
\label{lem:C}
\begin{enumerate}

  \item In Case (1) of the above construction, the DG module $X(1)$ is
  indecomposable in $\Dcatc(R)$.  It has
\[
  \inf X(1) = 0, \;\; \sup X(1) = i+d-1
\]
and
\[
  \Homology^{i+e-1}(X(1)) \cong \Homology^e(R) \neq 0, \;\;
  \Homology^{i+d-1}(X(1)) \cong \Homology^d(R) \cong k.
\]
It satisfies $\amp(X(1)) = \amp(X) + d - 1$ and $\varphi(X(1)) =
\varphi(X) + 1$.  Moreover, if the construction is applied to $X$ and
$X^{\prime}$ then $X(1) \cong X^{\prime}(1)$ implies $X \cong
X^{\prime}$ in $\Dcatc(R)$.  Finally, scalar multiplication induces a
non-degenerate bilinear form
\[
  \Homology^{d-e}(R) \times \Homology^{i+e-1}(X(1))
  \rightarrow \Homology^{i+d-1}(X(1)) \cong k.
\]

  \item In Case (2), the DG module $X(2)$ is indecomposable in
  $\Dcatc(R)$.  It has
\[
  \mbox{$\inf X(2) = 0 \;\;$ and $\;\; \sup X(2) = i+e-1$}.
\]
It satisfies $\amp(X(2)) = \amp(X) + e - 1$ and $\varphi(X(2)) =
\varphi(X) + 1$.  Moreover, if the construction is applied to $X$ and
$X^{\prime}$ then $X(2) \cong X^{\prime}(2)$ implies $X \cong
X^{\prime}$ in $\Dcatc(R)$.

  \item In Case (2${}_{\alpha}$), if $\alpha$ and $\alpha^{\prime}$ are
  elements of $\Homology^{i-d+e}\!X$ then
\[
  \mbox{$X(2_{\alpha}) \cong X(2_{\alpha^{\prime}})$ in $\Dcatc(R)$
    \; $\Leftrightarrow$ \;
    $\alpha = \kappa\alpha^{\prime}$ for a $\kappa$ in $k$.} 
\]
\end{enumerate}
\end{lemma}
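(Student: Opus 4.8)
Throughout write $X(\bullet)$ for whichever of $X(1)$, $X(2)$, $X(2_{\alpha})$ is under consideration, and let $\ell$ be the integer with $\Sigma^{-\ell}R$ the left-hand term of the defining triangle, so $\ell = i$ for $X(1)$ (triangle \eqref{equ:1}) and $\ell = i-d+e$ for $X(2)$ and $X(2_{\alpha})$ (triangle \eqref{equ:2}). The plan is to read the numerical assertions off the long exact cohomology sequences of these triangles, and to get indecomposability, cancellation and part (3) out of a single $\Hom$-computation. For the numerical part: the defining morphism $\Sigma^{-\ell}R \to X$ is non-zero on cohomology because it corresponds to a non-zero cohomology class, so in the long exact sequence the map $\Homology^{\ell}(\Sigma^{-\ell}R)=k \to \Homology^{\ell}(X)$ is injective; using $R^{<0}=0$, $R^0=k$, $R^1=0$, $\Homology^d\!R\cong k$ with $d=\sup R$, $\Homology^e\!R\neq 0$, $\inf X=0$, $\sup X=i$, the sequence then yields $\inf X(\bullet)=0$, the stated values of $\sup X(\bullet)$, and the isomorphisms $\Homology^{i+e-1}(X(1))\cong\Homology^e\!R$, $\Homology^{i+d-1}(X(1))\cong\Homology^d\!R\cong k$ (in Case (2) this is where one uses $\ell\geq 1$, so that the triangle does not touch $\Homology^0$); the amplitude equalities follow at once, and $\varphi(X(\bullet))=\varphi(X)+1$ from Lemma \ref{lem:varphi}(1) since $X(\bullet)$ is minimal semi-free with exactly one more copy of a (de)suspension of $R$ in its filtration than $X$. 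For the non-degenerate bilinear form in (1): the connecting map $X(1)\to\Sigma^{1-i}R$ of \eqref{equ:1} is $\Homology\!R$-linear on cohomology and, by the long exact sequence, restricts to the two isomorphisms just mentioned, which therefore identify the pairing in (1) with multiplication $\Homology^{d-e}\!R\times\Homology^e\!R\to\Homology^d\!R$; the latter is perfect because $R$ is Gorenstein, by Theorem \ref{thm:Gorenstein}(2).

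The $\Hom$-computation rests on the isomorphism $\RHom_R(X,R)\cong\Sigma^{-d}\dual\!X$, which comes from ${}_{R}R\cong{}_{R}(\Sigma^{-d}\dual\!R)$ (Theorem \ref{thm:Gorenstein}(3)) and the adjunction $\RHom_R(X,\dual\!R)\cong\dual\!X$, and gives $\Hom_{\Dcat(R)}(X,\Sigma^{j}R)\cong\dual\Homology^{d-j}(X)$. In particular $\Hom_{\Dcat(R)}(X,\Sigma^{-\ell}R)=0=\Hom_{\Dcat(R)}(X,\Sigma^{1-\ell}R)$, since the cohomology groups of $X$ that occur lie strictly above $\sup X=i$ (checked case by case, using $d\geq 4$ and $e\geq 2$). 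Applying $\Hom_{\Dcat(R)}(-,\Sigma^{1-\ell}R)$ to the defining triangle --- where the term $\Sigma^{-\ell}R$ contributes $\Hom_{\Dcat(R)}(\Sigma^{-\ell}R,\Sigma^{1-\ell}R)\cong\Homology^0\!R\cong k$ and $X$ contributes $0$ on both sides --- we obtain
\[
  \Hom_{\Dcat(R)}(X(\bullet),\Sigma^{1-\ell}R)\cong k,
\]
spanned by the connecting map $\partial$ of that triangle.

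Indecomposability now follows. Suppose $X(\bullet)\cong A\oplus B$ in $\Dcatc(R)$. Then $\Hom_{\Dcat(R)}(A,\Sigma^{1-\ell}R)\oplus\Hom_{\Dcat(R)}(B,\Sigma^{1-\ell}R)\cong k$, so one summand, say $B$, satisfies $\Hom_{\Dcat(R)}(B,\Sigma^{1-\ell}R)=0$; hence the component $\partial|_B$ of $\partial=(\partial|_A,\partial|_B)$ is zero. Since the defining triangle presents $X$ as the fibre of $\partial$, this gives $X\cong\mathrm{fib}(\partial|_A)\oplus B$. As $X$ is indecomposable, either $B=0$, and we are done, or $\mathrm{fib}(\partial|_A)=0$, in which case $\partial|_A$ is an isomorphism, $A\cong\Sigma^{1-\ell}R$, and $X(\bullet)\cong\Sigma^{1-\ell}R\oplus X$; but then $\dim_k\Homology(X(\bullet))=\dim_k\Homology\!R+\dim_k\Homology\!X$, whereas the long exact sequence of the defining triangle, combined with the non-vanishing of the defining morphism on cohomology, makes $\dim_k\Homology(X(\bullet))$ strictly smaller than $\dim_k\Homology\!R+\dim_k\Homology\!X$. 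This contradiction shows $X(\bullet)$ is indecomposable.

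Cancellation and part (3): let $\psi$ be an isomorphism between two of the mapping cones --- in (1) and (2) built from $X$ and $X'$ (necessarily with the same $i$, as $\sup$ of the cone determines it), in (3) from the same $X$ with parameters $\alpha$ and $\alpha'$. Composing $\psi$ with the connecting map of the target triangle gives an element of the one-dimensional space above, hence $c\,\partial$ for the connecting map $\partial$ of the source triangle and some $c\in k$. If $c=0$ then $\psi$ factors through the map $X'\to X'(\bullet)$ (resp.\ $X\to X(\bullet)$), so that map is a split epimorphism; rotating the triangle then forces it to split, exhibiting the cone as a direct summand of a DG module with $\sup$ equal to $i$ --- impossible, since $\sup$ of the cone exceeds $i$. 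So $c\neq 0$, and $(c\cdot\mathrm{id}_{\Sigma^{1-\ell}R},v_0,\psi)$ lifts to an isomorphism of triangles whose middle component $v_0$ is an isomorphism by the five-lemma for triangulated categories; this already gives $X\cong X'$ in (1) and (2). In (3), the left-hand square of this isomorphism of triangles reads $c\,h_{\alpha'}=v_0\circ h_{\alpha}$, and applying $\Homology^{i-d+e}$, together with the fact that $h_\beta$ corresponds to $\beta\in\Homology^{i-d+e}(X)$, yields $c\,\alpha'=\Homology^{i-d+e}(v_0)(\alpha)$; since $\Homology^i(X)\cong k$ the $\Homology\!R$-linear map $\Homology^i(v_0)$ is a non-zero scalar $\mu$, and $\Homology\!R$-linearity with the non-degeneracy of the pairing \eqref{equ:b} then forces $\Homology^{i-d+e}(v_0)=\mu\cdot\mathrm{id}$, so $\alpha=(c/\mu)\alpha'$. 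The converse in (3) is clear, since $\alpha=\kappa\alpha'$ with $\kappa\neq 0$ makes $h_\alpha=\kappa h_{\alpha'}$, whose cone is isomorphic to that of $h_{\alpha'}$. I expect the $\Hom$-computation giving $\Hom_{\Dcat(R)}(X(\bullet),\Sigma^{1-\ell}R)\cong k$ --- pinning down the Gorenstein-duality identity and the exact vanishing of the $X$-terms --- to be the main obstacle; the rest is bookkeeping with rotated triangles and the triangulated five-lemma.
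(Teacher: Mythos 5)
Your proof is correct and follows the same overall strategy as the paper's: the numerical claims are read off the long exact cohomology sequences of the defining triangles, $\varphi(X(\bullet))=\varphi(X)+1$ comes from minimality of the cone via Lemma \ref{lem:varphi}(1), the key input is the Gorenstein-duality identity $\Hom_{\Dcat(R)}(X,\Sigma^{j}R)\cong\dual\Homology^{d-j}(X)$ (the paper obtains it by dualizing and applying Theorem \ref{thm:Gorenstein}(3) on the right-module side, you by $R\cong\Sigma^{-d}\dual\!R$ and adjunction --- the same computation), cancellation is done by completing $(\psi,c\cdot\mathrm{id})$ to a morphism of triangles and invoking the triangulated five lemma, and part (3) rests on the non-degenerate pairing \eqref{equ:b}. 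You deviate genuinely in two places, both to the good. First, for indecomposability the paper verifies the hypotheses of \cite[lem.\ 6.5]{HKR} ($g$ non-zero, non-invertible, $\Hom_{\Dcatc(R)}(X,\Sigma\Sigma^{-i}R)=0$) and cites that lemma; you instead give a self-contained argument in which the one-dimensionality of $\Hom_{\Dcat(R)}(X(\bullet),\Sigma^{1-\ell}R)$ forces any complementary summand $B$ to split off from $X$, and the residual case $A\cong\Sigma^{1-\ell}R$ is killed by the count $\dim_k\Homology(X(\bullet))<\dim_k\Homology\!R+\dim_k\Homology\!X$; this amounts to proving the needed special case of the HKR lemma, at the price of also requiring the (easily checked) vanishing $\Hom_{\Dcat(R)}(X,\Sigma^{-\ell}R)=0$. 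Second, in part (3) the paper compares the annihilators of $\alpha$ and $\alpha'$ in $\Homology^{d-e}\!R$ and concludes proportionality from non-degeneracy of \eqref{equ:b}; you instead use $\Homology^{i}(X)\cong k$ together with $\Homology\!R$-linearity and the same pairing to show $\Homology^{i-d+e}(v_0)$ is a non-zero scalar, which gives the same conclusion. One small caveat: in Case (2) you read $\inf X(2)=0$ off the long exact sequence under the assumption $\ell=i-d+e\geq 1$, which is not literally forced by the stated hypotheses of Case (2) (they only give $i-d+e\geq 0$); it does hold for every module to which the construction is actually applied, since the tree starts at $X=R$, whence $i\geq d$ and $\ell\geq e\geq 2$, and the paper's own proof of (2) (``follows by similar arguments'') is silent on the point.
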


\begin{proof}
(1).  Indecomposability will follow from \cite[lem.\ 6.5]{HKR} if
I can show in $\Dcatc(R)$ that $g$ is non-zero (clear), non-invertible
(clear since $\inf \Sigma^{-i}R = i \geq 2$ but $\inf X = 0$), and
that $\Hom_{\Dcatc(R)}(X,\Sigma\Sigma^{-i}R) = 0$.  However,
\begin{align*}
  \Hom_{\Dcatc(R)}(X,\Sigma\Sigma^{-i}R)
  & \cong \Hom_{\Dcatc(R^{\opp})}(\dual\!\Sigma\Sigma^{-i}R,\dual\!X) \\
  & \cong \Hom_{\Dcatc(R^{\opp})}(\Sigma^{i-1}\dual\!R,\dual\!X) \\
  & \stackrel{\rm (a)}{\cong} \Hom_{\Dcatc(R^{\opp})}(\Sigma^{i-1+d}R,\dual\!X) \\
  & \cong \Homology^{-i+1-d}(\dual\!X) \\
  & \cong \dual\!\Homology^{i-1+d}(X) \\
  & \stackrel{\rm (b)}{=} 0
\end{align*}
where (a) is by Theorem \ref{thm:Gorenstein}(3) and (b) is because
$\sup X = i$.

The statements $\inf X(1) = 0$, $\sup X(1) = i+d-1$,
$\Homology^{i+e-1}(X(1)) \cong \Homology^e(R) \neq 0$, and
$\Homology^{i+d-1}(X(1)) \cong \Homology^d(R) \cong k$ follow from the
long exact cohomology sequence of the distinguished triangle
\eqref{equ:1}.  The statement about the amplitude is a consequence,
and $\varphi(X(1)) = \varphi(X) + 1$ because $X(1)$ is minimal
semi-free with one more copy of a desuspension of $R$ in its
semi-free filtration than $X$; cf. Lemma \ref{lem:varphi}(1).

To get the statement on isomorphisms, first observe that by a
computation like the one above,
\[
  \Hom_{\Dcatc(R)}(X(1),\Sigma\Sigma^{-i}R)
  \cong \dual\!\Homology^{i+d-1}(X(1))
  \cong \dual(k)
  \cong k.
\]
Now suppose that there is an isomorphism $X(1)
\stackrel{\sim}{\rightarrow} X^{\prime}(1)$ in $\Dcatc(R)$.  This
gives a diagram between the distinguished triangles defining $X(1)$
and $X^{\prime}(1)$,
\[
  \xymatrix{
  \Sigma^{-i}R \ar[r] & X \ar[r] & X(1) \ar[r] \ar[d] & \Sigma^{-i+1}R \\
  \Sigma^{-i}R \ar[r] & X^{\prime} \ar[r] & X^{\prime}(1) \ar[r] & \Sigma^{-i+1}R \lefteqn{.}
           }
\]
The last morphism in the upper distinguished triangle is non-zero, for
otherwise the triangle would be split contradicting that $X$ is
indecomposable.  Since $\Hom_{\Dcatc(R)}(X(1),\Sigma\Sigma^{-i}R)$ is
one-dimensional, there exists a morphism $\Sigma^{-i+1}R \rightarrow
\Sigma^{-i+1}R$ to give a commutative square.  Adding this morphism
and its desuspension to the diagram gives
\[
  \xymatrix{
  \Sigma^{-i}R \ar[r] \ar[d] & X \ar[r] & X(1) \ar[r] \ar[d] & \Sigma^{-i+1}R \ar[d]\\
  \Sigma^{-i}R \ar[r] & X^{\prime} \ar[r] & X^{\prime}(1) \ar[r] & \Sigma^{-i+1}R \lefteqn{,}
           }
\]
and the two new vertical arrows are also isomorphisms since they are
non-zero and since $\Hom_{\Dcatc(R)}(R,R) \cong k$.  By the axioms of
triangulated categories, there is a vertical morphism $X \rightarrow
X^{\prime}$ which completes to a commutative diagram, and this
morphism is an isomorphism by the triangulated five lemma.

Finally, to get the non-degenerate bilinear form, observe that $R$ is
Gorenstein so by Theorem \ref{thm:Gorenstein}(2) scalar multiplication
gives a non-degenerate bilinear form
\[
  \Homology^{d-e}(R) \times \Homology^{i+e-1}(\Sigma^{-i+1}R)
  \rightarrow \Homology^{i+d-1}(\Sigma^{-i+1}R) \cong k.
\]
But $X(1)$ is a mapping cone which in degrees $\geq i+1$ is equal to
$\Sigma^{-i+1}R$, so this gives a non-degenerate bilinear form
\[
  \Homology^{d-e}(R) \times \Homology^{i+e-1}(X(1))
  \rightarrow \Homology^{i+d-1}(X(1)) \cong k
\]
as claimed.

(2)  follows by similar arguments.

(3).  $\Leftarrow$ is elementary.  $\Rightarrow$:  Given the isomorphism
$X(2_{\alpha}) \rightarrow X(2_{\alpha^{\prime}})$, the method applied
in the proof of (1) produces a diagram between the distinguished
triangles defining $X(2_{\alpha})$ and $X(2_{\alpha^{\prime}})$,
\[
  \xymatrix{
  \Sigma^{-i+d-e}R \ar[r]^-{h_{\alpha}} \ar[d] & X \ar[r] \ar[d]^{\gamma} & X(2_{\alpha}) \ar[r] \ar[d] & \Sigma^{-i+d-e+1}R \ar[d]\\
  \Sigma^{-i+d-e}R \ar[r]_-{h_{\alpha^{\prime}}} & X \ar[r] & X(2_{\alpha^{\prime}}) \ar[r] & \Sigma^{-i+d-e+1}R \lefteqn{,}
           }
\]
where the vertical maps are isomorphisms.  Commutativity of the first
square implies $(\Homology^{i-d+e}(\gamma))(\alpha) = \alpha^{\prime}$.

Consider $x$ in $\Homology^{d-e}\!R$.  Then
\[
  x\alpha^{\prime} = 0
  \Leftrightarrow x(\Homology^{i-d+e}(\gamma))(\alpha) = 0
  \Leftrightarrow (\Homology^{i-d+e}(\gamma))(x\alpha) = 0
  \Leftrightarrow x\alpha = 0,
\]
the last $\Leftrightarrow$ because $\gamma$ is an isomorphism in
$\Dcatc(R)$ whence $\Homology^{i-d+e}(\gamma)$ is bijective.  Seeing
that the bilinear form \eqref{equ:b} is non-degenerate, this means
that $\alpha = \kappa\alpha^{\prime}$ for a $\kappa$ in $k$.
\end{proof}

Observe that it makes sense to insert $X(1)$ into either of Cases (1),
(2), and (2${}_{\alpha}$).  Likewise, it makes sense to insert $X(2)$
and $X(2_{\alpha})$ into Case (1).  Iterating Cases (1) and (2), the
following tree can be constructed.
\begin{equation}
\label{equ:tree}
  \vcenter{
    \xymatrix @C+1pc @R-2pc {
      & & & X(1,1,1) & \\
      & & X(1,1) \ar@{.}[ur] \ar@{.}[dr] & & \ldots \\
      & & & X(1,1,2) & \\
      & X(1) \ar@{.}[uur] \ar@{.}[ddr] & & & \\
      & & & X(1,2,1) & \\
      & & X(1,2) \ar@{.}[ur] \ar@{.}[dr] & & \\
      & & & {*} & \\
      X \ar@{.}[uuuur] \ar@{.}[ddddr] & & & & \dots \\
      & & & X(2,1,1) & \\
      & & X(2,1) \ar@{.}[ur] \ar@{.}[dr] & & \\
      & & & X(2,1,2) & \\
      & X(2) \ar@{.}[uur] \ar@{.}[ddr] & & & \\
      & & & {*} & \\
      & & {*} \ar@{.}[ur] \ar@{.}[dr] & & \ldots \\
      & & & {*} & \\
                            }
          }
\end{equation}
The notation is straightforward; for instance, by $X(1,2)$ is denoted
the DG module obtained by first performing the construction of Case (1),
then the construction of Case (2).  The rule for omitting nodes of the
tree is that no $X(\cdots)$  must contain two neighbouring digits $2$.

\begin{theorem}
\label{thm:Schmidt1}
Suppose that $\dim_k \Homology\!R \geq 3$.  Then the AR quiver
$\Gamma$ of $\Dcatc(R)$ has infinitely many components.
\end{theorem}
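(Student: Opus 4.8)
The plan is the following. For each positive integer $N$ I will exhibit $N+1$ pairwise non-isomorphic indecomposable objects of $\Dcatc(R)$ which all have $\inf=0$ and share a common value of $\varphi$, and then observe that any such collection must be spread over $N+1$ distinct components of $\Gamma$. Since $N$ is arbitrary, this forces $\Gamma$ to have infinitely many components.

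The key observation is a separation statement: if $M$ and $M'$ are indecomposable objects of $\Dcatc(R)$ with $\inf M=\inf M'=0$ and $\varphi(M)=\varphi(M')$, and $M\not\cong M'$, then $M$ and $M'$ lie in different components of $\Gamma$. To see this, suppose they lie in one component $C$. By Theorem \ref{thm:quiver_local_structure}, $C\cong\BZ A_\infty$, the function $\varphi$ is constant on each horizontal row of $C$, and its value on the $n$th row is $n\varphi_1$, where $\varphi_1$ is the (strictly positive, since every nonzero object of $\Dcatc(R)$ has a non-trivial minimal semi-free resolution, cf.\ Lemma \ref{lem:varphi}(1)) value of $\varphi$ on the edge. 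As $\varphi(M)=\varphi(M')$, the objects $M$ and $M'$ lie on the same row of $\BZ A_\infty$, hence in the same $\tau$-orbit, so $M'\cong\tau^p M$ for some integer $p$. The computation in the proof of Lemma \ref{lem:no_loops}(1), based on Remark \ref{rmk:tau} and Lemma \ref{lem:inf}, gives $\inf\tau^p M=\inf M+p(1-d)$; since $d\geq 4$ this forces $p=0$, whence $M'\cong M$, contradicting the hypothesis.

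To produce the objects, apply the construction preceding Lemma \ref{lem:C} starting from $X={}_R R$, which is an indecomposable minimal semi-free DG module with $\inf X=0$ and $\sup X=d\geq 4$; it fits Case (1), and also Case (2) since $\Homology^e R\neq 0$. Iterating yields the tree \eqref{equ:tree}. By Lemma \ref{lem:C}, every node is an indecomposable object of $\Dcatc(R)$ with $\inf=0$; passing to a Case (1)-child increases $\amp$ by $d-1$ and $\varphi$ by $1$, while passing to a Case (2)-child increases $\amp$ by $e-1$ and $\varphi$ by $1$. Hence a node reached from $X$ by $r$ operations, of which $k$ are of type (1) and $r-k$ of type (2), has $\varphi=\varphi(X)+r$ independently of the sequence, and amplitude $\amp X+k(d-1)+(r-k)(e-1)$, which is a strictly monotone function of $k$ because $d-e\geq 2$.

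Finally fix $N$ and set $r=2N$. For each $k\in\{N,\dots,2N\}$ the operation sequence $(21)^{r-k}1^{2k-r}$ has $k$ symbols $1$, $r-k$ symbols $2$, and no two consecutive $2$'s, so it is an admissible branch of the tree; let $M_k$ be the corresponding node. The $N+1$ objects $M_N,\dots,M_{2N}$ all have $\inf=0$ and the common value $\varphi=\varphi(X)+2N$, but pairwise distinct amplitudes, so they are pairwise non-isomorphic, and by the separation statement they occupy $N+1$ distinct components of $\Gamma$. As $N$ was arbitrary, $\Gamma$ has infinitely many components. The one place where genuine care is needed is the separation statement — this is where the structure theorem $C\cong\BZ A_\infty$, the additivity of $\varphi$, and $d\neq 1$ are all essential; the rest is bookkeeping with Lemma \ref{lem:C}, and note that we deliberately avoid having to decide whether all nodes of the tree at a given depth are non-isomorphic by distinguishing them with the coarse invariant $\amp$ alone.
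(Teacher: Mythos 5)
Your proof is correct and takes essentially the same route as the paper: the same tree of modules $X(a_1,\ldots,a_r)$ starting from $X=R$, with $\varphi$ constant on the columns and the amplitude strictly monotone in the number of Case (1) steps, so that a single column eventually meets arbitrarily many components. The only difference is cosmetic: your separation step pins two objects to the same row of $C\cong\BZ A_\infty$ via $\varphi$ and then kills a nonzero $\tau$-shift using $\inf\tau^p M=\inf M+p(1-d)$, whereas the paper notes instead that amplitude is $\tau$-invariant and hence constant on rows; the $\inf$-based mechanism you chose is exactly the one the paper deploys in its proof of Theorem \ref{thm:Schmidt2}.
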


\begin{proof}
It is a standing assumption in this section that $R$ is Gorenstein, so
each component $C$ of $\Gamma$ is isomorphic to $\BZ A_{\infty}$ as a
stable translation quiver by Theorem
\ref{thm:quiver_local_structure}(1).

Since $\dim_k \Homology\!R \geq 3$, there exists an $e \not\in \{ 0,d
\}$ such that $R$ has non-zero cohomology in degree $e$, so the above
constructions make sense.  Start with $X = R$ and consider the tree
\eqref{equ:tree}.  It follows from Lemma \ref{lem:C}, (1) and (2),
that the function $\varphi$ is constant with value $r$ on the $r$'th
column of the tree.  On the other hand, by Theorem
\ref{thm:quiver_local_structure}(3), the value of $\varphi$ on the
$n$'th horizontal row of a component $C \cong \BZ A_{\infty}$ of
$\Gamma$ is $n\varphi_1$.  Hence, if the vertices corresponding to two
modules in the $r$'th column of the tree \eqref{equ:tree} both belong
to $C$, then they sit in the same horizontal row of vertices in $C$.

Equation \eqref{equ:j} implies that $\amp \tau Y = \amp Y$ for each $Y$
in $\Dcatc(R)$.  However, on $C$, the action of $\tau$ is to move a
vertex one step to the left.  It follows that the amplitude is
constant on each horizontal row of $C$.

Combining these arguments, if the vertices corresponding to two
modules in the $r$'th column of the tree \eqref{equ:tree}
both belong to $C$, then the modules have the same amplitude.

On the other hand, in the construction above, Case (1) makes the
amplitude grow by $d-1$ and Case (2) makes the amplitude grow by
$e-1$.  Let $a_1, \ldots, a_r$ be a sequence of the digits $1$ and $2$
which does not contain two neighbouring digits $2$.  Suppose that the
sequence contains $s$ digits $1$ and $r-s$ digits $2$.  Then since
$\amp X = \amp R = d$ it holds that $\amp X(a_1, \ldots, a_r) = d +
s(d-1) + (r-s)(e-1)$, and since $e < d$ it is clear that this value
changes when $s$ changes.  So by choosing $r$ sufficiently large, a
column of the tree \eqref{equ:tree} can be achieved with an
arbitrarily large number of DG modules with pairwise different
amplitudes.

By the first part of the proof, this results in an arbitrarily large
number of different components of $\Gamma$, so $\Gamma$ has infinitely
many components.
\end{proof}

\begin{theorem}
\label{thm:Schmidt2}
Suppose that there is an $e$ with $\dim_k \Homology^e\!R \geq 2$.
Then the AR quiver $\Gamma$ of $\Dcatc(R)$ has families of distinct
components which are indexed by projective manifolds over $k$, and
these manifolds can be of arbitrarily high dimension.
\end{theorem}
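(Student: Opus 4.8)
The plan is to iterate the Case $(2_\alpha)$ construction. A single application already furnishes, for a suitable module $X$, a whole projective space of pairwise non-isomorphic indecomposables $X(2_\alpha)$ by Lemma \ref{lem:C}(3); iterating it $m$ times, with a Case $(1)$ step inserted between consecutive Case $(2_\alpha)$ steps to restore the hypotheses, will produce a family of indecomposables indexed by $(\BP^{N-1})^m$, all of whose members lie in pairwise distinct components of $\Gamma$. To prepare, note that since $R$ is Gorenstein, Poincar\'{e} duality (Theorem \ref{thm:Gorenstein}(2)) gives $\dim_k \Homology^j\!R = \dim_k \Homology^{d-j}\!R$, so the hypothesis $\dim_k \Homology^e\!R \geq 2$ forces $e \notin \{0,d\}$, puts $N := \dim_k \Homology^{d-e}\!R = \dim_k \Homology^e\!R \geq 2$, and gives $\dim_k \Homology\!R \geq 3$; hence Theorem \ref{thm:quiver_local_structure} applies, so every component $C$ of $\Gamma$ is $\BZ A_\infty$, the function $\varphi$ takes the value $n\varphi_1$ on the $n$-th horizontal row of $C$ (with $\varphi_1 \geq 1$, since $\varphi$ vanishes only on the zero object by Lemma \ref{lem:varphi}(1)), and iterating \eqref{equ:j} shows that $\tau^p$ shifts $\inf$ by $-p(d-1)$ while preserving amplitude.

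First I would build the family. Applying Case $(1)$ to $X = R$ (legitimate since $\inf R = 0$ and $\sup R = d \geq 2$) gives $X_1 := R(1)$. By Lemma \ref{lem:C}(1), $X_1$ has $\inf X_1 = 0$, one-dimensional top cohomology $\Homology^{i_1}\!X_1 \cong k$ with $i_1 = \sup X_1$, and the scalar-multiplication pairing $\Homology^{d-e}(R) \times \Homology^{i_1-d+e}(X_1) \to \Homology^{i_1}(X_1) \cong k$ is non-degenerate; matching degrees in the formulas of Lemma \ref{lem:C}(1) shows $\Homology^{i_1-d+e}(X_1) \cong \Homology^e(R) \neq 0$, so $X_1$ satisfies all the hypotheses of Case $(2_\alpha)$ and $\dim_k \Homology^{i_1-d+e}(X_1) = N$. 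Now iterate: given $X_j$ satisfying the Case $(2_\alpha)$ hypotheses, choose $\alpha_j \neq 0$ in $\Homology^{i_j-d+e}(X_j)$, form the indecomposable $X_j(2_{\alpha_j})$ (Lemma \ref{lem:C}(2)), and apply Case $(1)$ to obtain $X_{j+1} := X_j(2_{\alpha_j})(1)$; the identical degree bookkeeping, via Lemma \ref{lem:C}, parts (1) and (2), shows $X_{j+1}$ again satisfies the Case $(2_\alpha)$ hypotheses. After $m$ rounds one gets
\[
  Y_{\vec\alpha} \;=\; X_1(2_{\alpha_1})(1)(2_{\alpha_2})(1)\cdots(1)(2_{\alpha_m}),
\]
which by Lemma \ref{lem:C}, parts (1) and (2), is indecomposable in $\Dcatc(R)$, has $\inf Y_{\vec\alpha} = 0$, and has amplitude and $\varphi$-value depending only on $m$.

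Next I would separate components and identify the parameter space. If $Y_{\vec\alpha}$ and $Y_{\vec\alpha'}$ lay in one component $C \cong \BZ A_\infty$, then, having equal $\varphi$, they would sit in the same horizontal row of $C$, so $Y_{\vec\alpha'} \cong \tau^p Y_{\vec\alpha}$ for some $p$; but $\tau^p$ shifts $\inf$ by $-p(d-1)$ with $d \geq 4$ while both modules have $\inf = 0$, forcing $p = 0$ and $Y_{\vec\alpha} \cong Y_{\vec\alpha'}$. It therefore remains to decide when two members of the family are isomorphic. Peeling from the outside and using Lemma \ref{lem:C}, parts (1) and (2) (an isomorphism between two outermost $(1)$- resp.\ $(2)$-steps descends to an isomorphism of the modules they were built on), one reduces inductively to comparing the $\alpha$'s at each level. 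Here Lemma \ref{lem:C}(3) applies once one observes that \emph{any} $R$-module isomorphism $\phi\colon X_j \to X'_j$ carries the scalar-multiplication pairing \eqref{equ:b} of $X_j$ to a nonzero scalar multiple of that of $X'_j$ (because $\phi$ is $R$-linear and a scalar on the one-dimensional target), hence induces on $\BP(\Homology^{i_j-d+e}(X_j))$ a map independent of the choice of $\phi$, namely the canonical isomorphism with $\BP((\Homology^{d-e}R)^{*})$ supplied by the pairing. Identifying all these projective spaces with $\BP^{N-1} := \BP((\Homology^{d-e}R)^{*})$ along the canonical isomorphisms, one gets $Y_{\vec\alpha} \cong Y_{\vec\alpha'}$ exactly when $\vec\alpha = \vec\alpha'$ in $(\BP^{N-1})^m$. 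Thus $\vec\alpha \mapsto (\text{the component of } Y_{\vec\alpha})$ injects the projective manifold $(\BP^{N-1})^m$, of dimension $m(N-1) \geq m$, into the set of components of $\Gamma$; letting $m$ grow gives families of arbitrarily high dimension.

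The hard part is the rigidity analysis in the previous paragraph. Its crux is the double role of non-degeneracy of the pairing \eqref{equ:b}: it is what keeps $\dim_k \Homology^{i_j-d+e}(X_j)$ equal to $N = \dim_k \Homology^e\!R$ all along the iteration, and it is what removes every leftover automorphism ambiguity from the peeling argument, so that the canonical identifications are forced and the map to components is injective on all of $(\BP^{N-1})^m$ rather than on some quotient by automorphisms. Everything else — checking that the Case $(2_\alpha)$ hypotheses survive each step, and reading off $\varphi$, amplitude and $\inf$ — is routine bookkeeping with Lemma \ref{lem:C} and the combinatorics of $\BZ A_\infty$.
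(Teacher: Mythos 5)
Your proposal is correct and follows essentially the same route as the paper: iterate the Case $(1)$/Case $(2_{\alpha})$ constructions starting from $X=R$, use Lemma \ref{lem:C}, parts (1)--(3), to peel off layers and identify the parameter space as a product of projective spaces $\BP(\Homology^e\!R)$, and separate components by combining the constancy of $\varphi$ and $\inf$ on the family with the fact that $\tau$ shifts $\inf$ by $-(d-1)$ along each row of $\BZ A_{\infty}$. The only differences are cosmetic — you insert an initial Case $(1)$ step before the first Case $(2_\alpha)$ step, and you make explicit the canonical identification of the various projective spaces via the non-degenerate pairing \eqref{equ:b}, a point the paper's proof leaves implicit.
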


\begin{proof}
Again, it is a standing assumption in this section that $R$ is
Gorenstein, so each component $C$ of $\Gamma$ is isomorphic to $\BZ
A_{\infty}$ as a stable translation quiver by Theorem
\ref{thm:quiver_local_structure}(1).

Set $X = R$.  With an obvious notation, consider
$X(2_{\alpha},1,2_{\beta})$.  Then an isomorphism
$X(2_{\alpha},1,2_{\beta}) \cong
X(2_{\alpha^{\prime}},1,2_{\beta^{\prime}})$ implies $X(2_{\alpha},1)
\cong X(2_{\alpha^{\prime}},1)$ by Lemma \ref{lem:C}(2), and then
$\beta = \lambda\beta^{\prime}$ for a $\lambda$ in $k$ by Lemma
\ref{lem:C}(3).  And $X(2_{\alpha},1) \cong X(2_{\alpha^{\prime}},1)$
implies $X(2_{\alpha}) \cong X(2_{\alpha^{\prime}})$ by Lemma
\ref{lem:C}(1), and then $\alpha = \kappa\alpha^{\prime}$ for a
$\kappa$ in $k$ by Lemma \ref{lem:C}(3).

The $X(2_{\alpha},1,2_{\beta})$ hence give a family of pairwise
non-isomorphic objects of $\Dcatc(R)$ parametrized by the Cartesian
product $\{\mbox{rays of $\alpha$'s}\} \times \{\mbox{rays of
  $\beta$'s}\}$.

Now, $\sup X = d$ so the class $\alpha$ is in $\Homology^{d-d+e}(X)$,
cf.\ the construction in Case (2).  However,
\[
  \Homology^{d-d+e}(X) = \Homology^e(X) = \Homology^e(R).
\]
Hence $\{\mbox{rays of $\alpha$'s}\} = \BP(\Homology^e\!R)$ where
$\BP$ denotes the projective space of rays in a vector space.
Moreover, $\sup X(2_{\alpha},1) = d+(e-1)+(d-1) = 2d+e-2$ by Lemma
\ref{lem:C}, (1) and (2), so the class $\beta$ is in
$\Homology^{(2d+e-2)-d+e}(X(2_{\alpha},1))$.  However,
\begin{align*}
  \Homology^{(2d+e-2)-d+e}(X(2_{\alpha},1))
  & = \Homology^{d+2e-2}(X(2_{\alpha},1)) \\
  & = \Homology^{(d+e-1)+e-1}(X(2_{\alpha},1)) \\
  & \cong \Homology^e(R),
\end{align*}
where $\cong$ is by Lemma \ref{lem:C}(1) because $\sup X(2_{\alpha}) =
d+e-1$.  Hence it is also the case that $\{\mbox{rays of $\beta$'s}\}
= \BP(\Homology^e\!R)$.

This shows that the $X(2_{\alpha},1,2_{\beta})$ give a family of
pairwise non-i\-so\-mor\-phic objects of $\Dcatc(R)$ indexed by
$\BP(\Homology^e\!R) \times \BP(\Homology^e\!R)$.  Note that the
projective space $\BP(\Homology^{e}\!R)$ is non-trivial since $\dim_k
\Homology^e\!R \geq 2$.

By Lemma \ref{lem:C}, (1) and (2), all the $X(2_{\alpha},1,2_{\beta})$
have the same value of $\varphi$ (it is $4$), so if the vertices of
two non-isomorphic ones belonged to the same component $C$ of
$\Gamma$, then they would be different vertices in the same horizontal
row of $C \cong \BZ A_{\infty}$ because the value of $\varphi$ on the
$n$'th row of $C$ is $n\varphi_1$ by Theorem
\ref{thm:quiver_local_structure}(3).  However, it follows from
Equation \eqref{equ:j} that $\inf(\tau Y) = \inf(Y) - d + 1$, so
different vertices in the $n$'th row of $C$ correpond to DG modules
with different $\inf$, but the $X(2_{\alpha},1,2_{\beta})$ all have
the same $\inf$ by Lemma \ref{lem:C}, (1) and (2) (it is $0$).  Hence
the vertices of two non-isomorphic $X(2_{\alpha},1,2_{\beta})$'s must
belong to different components of $\Gamma$, so a family has been found
of distinct components of $\Gamma$ parametrized by the projective
manifold $\BP(\Homology^e\!R) \times \BP(\Homology^e\!R)$ over $k$.

An analogous argument with objects of the form
$X(2_{\alpha},1,2_{\beta},1,\ldots,1,2_{\gamma})$ produces families of
distinct components of the AR quiver indexed by projective
ma\-ni\-folds of arbitrarily high dimension, as claimed.
\end{proof}

\section{Poincar\'{e} duality spaces}
\label{sec:topology}

This section makes explicit the highlights of the previous sections
for DG algebras of the form $\Chain^*(X;k)$.  The results first
appeared in \cite{artop}, \cite{arquiv}, and \cite{Schmidt}.

\begin{setup}
In this section, the field $k$ will have characteristic $0$.  By $X$
will be denoted a simply connected topological space with $\dim_k
\Homology^*(X;k) < \infty$.  Write
\[
  n = \sup \{\, i \,|\, \Homology^i(X;k) \neq 0 \,\}.
\]

When the singular cochain complex $\Chain^*(X;k)$ and singular
cohomology $\Homology^*(X;k)$ appear below, it is always with
coefficients in $k$, so I will use the shorthands $\Chain^*(X)$ and
$\Homology^*(X)$.
\end{setup}

\begin{remark}
\label{rmk:A}
The singular chain complex $\Chain^*(X)$ is a DG algebra under cup
product, and by \cite[exa.\ 6, p.\ 146]{FHTbook}, it is
quasi-isomorphic to a commutative DG algebra $A$ satisfying the
conditions of Setup \ref{set:blanket}.
\end{remark}

\begin{remark}
For $X$ to be simply connected means that it is path connected and
that each closed path in $X$ can be shrinked continuously to a point.
Equivalently, $X$ is path connected and its fundamental group
$\pi_1(X)$ is trivial.

The space $X$ is said to have Poincar\'{e} duality over $k$ if there
is an isomorphism
\[
  \dual\!\Homology^*(X) \cong \Sigma^n \Homology^*(X)
\]
of graded left-$\Homology^*(X)$-modules.  It is a classical theorem
that any compact $n$-dimensional manifold has Poincar\'{e} duality;
indeed, this is one of the oldest results of algebraic topology.

A consequence of Poincar\'{e} duality over $k$ is that there are
isomorphisms of vector spaces
\[
  \dual\!\Homology^i(X) \cong \Homology^{n-i}(X)
\]
for each $i$, and hence that the singular cohomology $\Homology^*(X)$
with coefficients in $k$ is concentrated between dimensions $0$ and
$n$ and has the same vector space dimension in degrees $i$ and $n-i$.
Geometrically, this is in a sense the statement that the number of
holes with $i$-dimensional boundary enclosed by $X$ is equal to the
number of holes with $(n-i)$-dimensional boundary enclosed by $X$.

Algebraically, spaces with Poincar\'{e} duality emulate Gorenstein
algebras; see \cite{FHTpaper}.
\end{remark}

For the definition of $n$-Calabi-Yau categories, see Definitions
\ref{def:Serre} and \ref{def:CY}.

\begin{theorem}
\label{thm:Chain_CY}
The following conditions are e\-qui\-va\-lent.
\begin{enumerate}
  \item  $\Dcatc(\Chain^*(X))$ is an $n$-Calabi-Yau category.
  \item  $\Dcatc(\Chain^*(X)^{\opp})$ is an $n$-Calabi-Yau category.
  \item  $X$ has Poincar\'{e} duality over $k$.
\end{enumerate}
\end{theorem}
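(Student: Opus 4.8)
The plan is to reduce everything to the commutative model of $\Chain^*(X)$ and then read the statement off from Theorem~\ref{thm:Gorenstein} and Remark~\ref{rmk:tau}. By Remark~\ref{rmk:A}, $\Chain^*(X)$ is quasi-isomorphic to a commutative DG algebra $A$ satisfying the hypotheses of Setup~\ref{set:blanket}. Hence $\Chain^*(X)^{\opp}$ is quasi-isomorphic to $A^{\opp} = A$; the derived equivalences $\Dcat(\Chain^*(X)) \simeq \Dcat(A) \simeq \Dcat(\Chain^*(X)^{\opp})$ restrict to triangulated equivalences between the full subcategories of compact objects; and $\Homology\!A \cong \Homology^*(X)$, so that $d = \sup A = \sup\Homology^*(X) = n$, while the commutativity of $\Homology^*(X)$ erases any distinction between its left- and right-module statements. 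In particular conditions (1) and (2) become the same assertion about $\Dcatc(A)$, so it remains to prove (1)$\Leftrightarrow$(3).

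For (3)$\Rightarrow$(1) I would argue as follows. Poincar\'{e} duality over $k$ gives an isomorphism $\dual\!\Homology^*(X) \cong \Sigma^n \Homology^*(X)$ of graded $\Homology^*(X)$-modules, which by commutativity holds simultaneously on the left and on the right; thus $A$ satisfies condition~(2) of Theorem~\ref{thm:Gorenstein} with $d = n$, and is therefore Gorenstein. By Remark~\ref{rmk:tau}, $\Dcatc(A)$ then carries the Serre functor $S(-) = \dual\!A \LTensor_A -$, and Theorem~\ref{thm:Gorenstein}(3) gives $\dual\!A \cong \Sigma^n A$ in $\Dcat(A)$, so $S \cong \Sigma^n(-)$; hence $\Sigma^n$ is a Serre functor of $\Dcatc(A)$. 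To see that $n$ is the \emph{smallest} non-negative integer with this property, suppose $\Sigma^m$ is also a Serre functor for some $m$ with $0 \le m \le n$. Serre functors being unique up to natural isomorphism, $\Sigma^m \cong \Sigma^n$ on $\Dcatc(A)$; evaluating at ${}_A A$ and passing to cohomology yields $\Sigma^m \Homology^*(X) \cong \Sigma^n \Homology^*(X)$ as graded vector spaces, and since $\Homology^0(X) = k \ne 0$ while $\Homology^{<0}(X) = 0$ this forces $m = n$. So $\Dcatc(A)$ is $n$-Calabi-Yau.

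For (1)$\Rightarrow$(3) I would use that a Serre functor gives rise to Auslander-Reiten triangles: if $\Dcatc(A)$ is $n$-Calabi-Yau it admits a Serre functor, and the Hom-finite triangulated category $\Dcatc(A)$ then has AR triangles (this is the general principle behind the results of \cite{KrauseARZ} invoked in Proposition~\ref{pro:AR} and Remark~\ref{rmk:tau}), so $A$ is Gorenstein by Theorem~\ref{thm:R}. Condition~(2) of Theorem~\ref{thm:Gorenstein}, together with $d = n$ and $\Homology\!A \cong \Homology^*(X)$, then gives $\dual\!\Homology^*(X) \cong \Sigma^n \Homology^*(X)$ as graded $\Homology^*(X)$-modules, which is exactly the statement that $X$ has Poincar\'{e} duality over $k$.

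The main obstacle is not a technical difficulty but rather making sure the integer $n$ in ``$n$-Calabi-Yau'' coincides with the one in ``$n$-dimensional Poincar\'{e} duality''. This hinges on the two points $d = \sup A = n$ and the fact that, because $\Homology^*(X)$ is concentrated in non-negative degrees with $\Homology^0(X) = k \ne 0$, the suspension power realizing the Serre functor is uniquely pinned down --- which is precisely the content of the minimality clause in the definition of an $n$-Calabi-Yau category.
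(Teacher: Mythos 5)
Your proposal is correct and follows essentially the same route as the paper: reduce to the commutative model $A$, identify Poincar\'{e} duality with the Gorenstein condition via Theorem~\ref{thm:Gorenstein}(2), get the Serre functor $\dual\!A \LTensor_A - \cong \Sigma^n$ from Theorem~\ref{thm:Gorenstein}(3) and Equation~\eqref{equ:Serre2}, and use the Serre functor/AR-triangle correspondence together with Theorem~\ref{thm:R} for the converse. The only (welcome) difference is that you explicitly verify the minimality of $n$ in the Calabi-Yau condition, a point the paper's proof leaves implicit.
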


\begin{proof}
This will involve showing that the conditions of the theorem are
also e\-qui\-va\-lent to the following two conditions.
\begin{enumerate}
  \setcounter{enumi}{3}
  \item  $\Dcatc(\Chain^*(X))$ has AR triangles.
  \item  $\Dcatc(\Chain^*(X)^{\opp})$ has AR triangles.
\end{enumerate}

For the proof, $\Chain^*(X)$ can be replaced with the commutative DG
algebra $A$ by Remark \ref{rmk:A}.  So it is clear that
(1)$\Leftrightarrow$(2) and that (4)$\Leftrightarrow$(5).

Condition (3), that $X$ has Poincar\'{e} duality, means
${}_{\Homology\!A}(\dual\!\Homology\!A) \cong {}_{\Homology\!A}(\Sigma^n 
\Homology\!A)$; since $A$ is commutative, Theorem
\ref{thm:Gorenstein}(2) implies that this is equivalent to $A$ being
Gorenstein.  Condition (4) is also equivalent to $A$ being Gorenstein
by Theorem \ref{thm:R}.  It follows that (3)$\Leftrightarrow$(4).

(1)$\Rightarrow$(4) holds since a Calabi-Yau category has a Serre
functor and hence AR triangles, see Definition \ref{def:Serre},
Theorem \ref{thm:Serre}, and Definition \ref{def:CY}.

(3)$\Rightarrow$(1).  The DG algebra $A$ is commutative, so Theorem
\ref{thm:Gorenstein}(3) implies that condition (3) is equivalent to
\[
  \dual\!A \cong \Sigma^n A
\]
in the derived category of DG bi-$A$-modules.  Inserting this into
Equation \eqref{equ:Serre2} shows that the Serre functor of $\Dcatc(A)$
is $\Sigma^n$ so (1) holds, cf.\ Definition \ref{def:CY}.
\end{proof}

\begin{theorem}
\label{thm:Chain_Gamma}
Suppose that $X$ has Poincar\'{e} duality over $k$ and that it
satisfies $\dim_k \Homology^*(X) \geq 2$.  Then each component of the
AR quiver $\Gamma$ of $\Dcatc(\Chain^*(X))$ is isomorphic to $\BZ
A_{\infty}$.

If $\dim_k \Homology^*(X) = 2$, then $\Gamma$ has $n-1$ components. 

If $\dim_k \Homology^*(X) \geq 3$, then $\Gamma$ has infinitely many
components.

If $\dim_k \Homology^e(X) \geq 2$ for some $e$, then $\Gamma$ has
families of distinct components which are indexed by projective
manifolds over $k$, and these manifolds can be of arbitrarily high
dimension.
\end{theorem}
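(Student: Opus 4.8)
The plan is to reduce the statement entirely to the structural results of Sections~\ref{sec:local} and~\ref{sec:global}, which are phrased for an abstract Gorenstein DG algebra $R$. First I would replace $\Chain^*(X)$ by the commutative DG algebra $A$ of Remark~\ref{rmk:A}. Since $\Chain^*(X)$ and $A$ are quasi-isomorphic, $\Dcatc(\Chain^*(X))$ and $\Dcatc(A)$ are equivalent as triangulated categories and hence have the same AR quiver $\Gamma$; and $A$ satisfies Setup~\ref{set:blanket}, so writing $d = \sup A$ one has $d = \sup\{\, i \mid \Homology^i\!A \neq 0 \,\} = \sup\{\, i \mid \Homology^i(X) \neq 0 \,\} = n$, together with $\dim_k \Homology\!A = \dim_k \Homology^*(X)$ and $\dim_k \Homology^e\!A = \dim_k \Homology^e(X)$ for each $e$.

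Next I would check that the Poincar\'{e} duality hypothesis on $X$ is precisely the statement that $A$ is Gorenstein. This is the argument already used in the proof of Theorem~\ref{thm:Chain_CY}: Poincar\'{e} duality means ${}_{\Homology\!A}(\dual\!\Homology\!A) \cong {}_{\Homology\!A}(\Sigma^n \Homology\!A)$ as graded left-$\Homology\!A$-modules, and since $A$ is commutative, $\Homology\!A$ is graded-commutative, so this left-module isomorphism automatically supplies the corresponding right-module isomorphism. Thus both halves of the condition in Theorem~\ref{thm:Gorenstein}(2) hold, and $A$ is Gorenstein. Combined with $\dim_k \Homology\!A \geq 2$, we are now exactly in the situation of Setup~\ref{set:local}.

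The four assertions then follow by applying the results of Sections~\ref{sec:local} and~\ref{sec:global} with $R = A$. Each component of $\Gamma$ is isomorphic to $\BZ A_{\infty}$ by Theorem~\ref{thm:quiver_local_structure}(1). If $\dim_k \Homology^*(X) = 2$, i.e.\ $\dim_k \Homology\!A = 2$, then Theorem~\ref{thm:spheres} gives that $\Gamma$ has $d - 1 = n - 1$ components. If $\dim_k \Homology^*(X) \geq 3$, then Theorem~\ref{thm:Schmidt1} gives infinitely many components. Finally, if $\dim_k \Homology^e(X) \geq 2$ for some $e$, then $\dim_k \Homology^e\!A \geq 2$, and Theorem~\ref{thm:Schmidt2} produces families of distinct components indexed by projective manifolds over $k$ of arbitrarily high dimension.

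In truth there is no serious obstacle here: the theorem is a dictionary translation of results already established for general Gorenstein DG algebras with at least two-dimensional cohomology. The only points that take any care are the bookkeeping identifications $n = d$ and $\dim_k \Homology^*(X) = \dim_k \Homology\!A$ (immediate from Setup~\ref{set:blanket}), and the observation that for the \emph{commutative} model $A$ the one-sided Poincar\'{e} duality isomorphism already forces the two-sided Gorenstein condition of Theorem~\ref{thm:Gorenstein}(2). This last point is the conceptual heart of why ``Poincar\'{e} duality'' is exactly the right hypothesis, and it is the step I would present most carefully.
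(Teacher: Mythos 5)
Your proposal is correct and follows essentially the same route as the paper: pass to the commutative model $A$, observe that Poincar\'{e} duality plus commutativity yields the two-sided Gorenstein condition of Theorem~\ref{thm:Gorenstein}(2) (exactly as in the proof of Theorem~\ref{thm:Chain_CY}), and then quote Theorems~\ref{thm:quiver_local_structure}, \ref{thm:spheres}, \ref{thm:Schmidt1}, and~\ref{thm:Schmidt2} with $d=n$. The paper's own proof is just a terser version of the same reduction.
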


\begin{proof}
Since $\Chain^*(X)$ is quasi-isomorphic to $A$, the theory of the
previous sections applies to $\Chain^*(X)$.  As in the proof of
Theorem \ref{thm:Chain_CY}, since $X$ has Poincar\'{e} duality,
$\Chain^*(X)$ is Gorenstein.  The present theorem hence follows from
Theorems \ref{thm:quiver_local_structure}, \ref{thm:spheres},
\ref{thm:Schmidt1}, and \ref{thm:Schmidt2}.
\end{proof}

Theorem \ref{thm:Chain_CY} and its proof imply that if $X$ has
Poincar\'{e} duality over $k$, then the AR quiver of
$\Dcatc(\Chain^*(X))$ is a stable translation quiver.

\begin{theorem}
The AR quiver of $\Dcatc(\Chain^*(X))$ is a weak homotopy invariant of
$X$.

If $X$ is restricted to spaces with Poincar\'{e} duality over $k$,
then the AR quiver of $\Dcatc(\Chain^*(X))$, viewed as a stable
translation quiver, is a weak homotopy invariant of $X$.
\end{theorem}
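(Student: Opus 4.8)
The plan is to reduce the statement to two facts used repeatedly in the paper: the functoriality of the cochain construction, and the fact that quasi-isomorphic DG algebras have equivalent compact derived categories.

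First I would observe that a weak homotopy equivalence $f \colon X \to Y$ induces a quasi-isomorphism of cochain DG algebras. Indeed, a weak homotopy equivalence induces isomorphisms on singular homology with integer coefficients, hence, by the universal coefficient theorem, on singular cohomology with coefficients in $k$; this is classical. The induced map $f^{*} \colon \Chain^{*}(Y) \to \Chain^{*}(X)$ is a morphism of DG algebras because cup product is natural, and it is a quasi-isomorphism of the underlying complexes by the previous sentence, so it is a quasi-isomorphism of DG algebras. Note also that $f$ induces isomorphisms on fundamental groups and on $\Homology^{*}$, so the class of spaces fixed in the Setup — simply connected with $\dim_{k}\Homology^{*}(X) < \infty$ — is closed under weak homotopy equivalence, and the integer $n$ is preserved.

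Next I would invoke the standard fact that a quasi-isomorphism $\varphi \colon S \to R$ of DG algebras induces, by restriction of scalars along $\varphi$, a triangulated equivalence $\Dcat(R) \to \Dcat(S)$; this carries the rank-one free module ${}_{R}R$ to a DG module isomorphic in $\Dcat(S)$ to ${}_{S}S$, and hence restricts to a triangulated equivalence $\Dcatc(R) \to \Dcatc(S)$, the compact objects being precisely those finitely built from the rank-one free module, cf.\ Definition \ref{def:D}. Taking $\varphi = f^{*}$ gives a triangulated equivalence $\Dcatc(\Chain^{*}(X)) \simeq \Dcatc(\Chain^{*}(Y))$. Since the AR quiver of a Krull-Schmidt triangulated category — its vertices the isomorphism classes of indecomposable objects, its arrows recording irreducible morphisms — is defined in purely categorical terms, any triangulated equivalence induces an isomorphism of AR quivers. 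This proves the first assertion.

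For the second assertion, if $X$ has Poincar\'{e} duality over $k$ then so does $Y$, since the isomorphism of graded algebras $\Homology^{*}(X) \cong \Homology^{*}(Y)$ induced by $f$ transports the defining isomorphism $\dual\!\Homology^{*}(X) \cong \Sigma^{n}\Homology^{*}(X)$ of graded left-$\Homology^{*}(X)$-modules to the analogous one for $Y$. By Theorem \ref{thm:Chain_CY} and its proof, both $\Chain^{*}(X)$ and $\Chain^{*}(Y)$ are then Gorenstein, so $\Dcatc(\Chain^{*}(X))$ and $\Dcatc(\Chain^{*}(Y))$ have AR triangles by Theorem \ref{thm:R}. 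In a triangulated category with AR triangles the AR translation $\tau$ is determined by the AR triangles, which are themselves categorical data, so the equivalence above is automatically compatible with $\tau$ and therefore restricts to an isomorphism of stable translation quivers. The one point deserving care is the very first step — that a weak homotopy equivalence between possibly non-CW spaces still induces an isomorphism on singular cohomology — but this is a standard result, and everything downstream is formal.
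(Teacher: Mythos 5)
Your proposal is correct and takes essentially the same route as the paper, which simply cites \cite[thm.\ 4.15]{FHTbook} for a chain of quasi-isomorphisms of DG algebras linking $\Chain^*(X)$ and $\Chain^*(X^{\prime})$ and then invokes the resulting triangulated equivalence of $\Dcatc$'s together with the purely categorical nature of the AR quiver and of the AR translation. The only point to add is that ``same weak homotopy type'' is in general a zig-zag relation rather than a single map $X \to X^{\prime}$, so your argument for one weak equivalence should be composed along the zig-zag --- which is precisely what the cited theorem packages.
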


\begin{proof}
If $X$ and $X^{\prime}$ have the same weak homotopy type, then by
\cite[thm.\ 4.15]{FHTbook} there exists a series of quasi-isomorphisms
of DG algebras linking $\Chain^*(X)$ and $\Chain^*(X^{\prime})$.
Hence $\Dcatc(\Chain^*(X))$ and $\Dcatc(\Chain^*(X^{\prime}))$ are
equivalent triangulated categories, and this implies both parts of the
theorem.
\end{proof}

\section{Open problems}
\label{sec:open}

Let me close the paper by proposing the following open problems.  The
first one is due to Karsten Schmidt, see \cite[sec.\ 6]{Schmidt}.

\begin{problem}
Develop a theory of representation type of simply connected co\-chain DG
algebras.

What is known so far is the following.
\begin{enumerate}
  \item  By Theorem \ref{thm:spheres}, if $\dim_k \Homology\!R = 2$,
    then the AR quiver $\Gamma$ of $\Dcatc(R)$ has a finite number of
    components. 

    Suppose that $R$ is Gorenstein.

  \item  By Theorem \ref{thm:Schmidt1}, if $\dim_k \Homology\!R \geq
    3$, then $\Gamma$ has infinitely many components.
    
  \item  By Theorem \ref{thm:Schmidt2}, if $\dim_k \Homology^e\!R \geq
    2$ for some $e$, then $\Gamma$ has families of distinct components
    which are indexed by projective manifolds, and these manifolds can
    be of arbitrarily high dimension.
\end{enumerate}
It is tempting to interpret the DG algebras of (1) as having finite
representation type, and the ones of (3) as having wild representation
type.

If $\dim_k \Homology\!R \geq 3$ but $\dim_k \Homology^i\!R \leq 1$ for
each $i$, then it is not clear whether the infinitely many components
of $\Gamma$ form discrete or continuous families, or indeed, what
these words precisely mean in the context.

Note that some previous work does exist on the representation type of
derived categories, see \cite{GeissKrause}, but it does not apply to
the categories considered in this paper.
\end{problem}

\begin{problem}
What is the structure of the AR quiver of $\Dcatc(R)$ if $R$ is not
Gorenstein?

Do components of a different shape than $\BZ A_{\infty}$ become
possible?
\end{problem}

\begin{problem}
Generalize the theory to cochain DG algebras which are not simply
connected.

Presently, not even the structure of $\Dcatc(\Chain^*(S^1;\BQ))$ is
known because $S^1$ and hence $\Chain^*(S^1;\BQ)$ is not simply
connected.

A generalization to the non-simply connected case may impact on
non-com\-mu\-ta\-ti\-ve geometry for which more general cochain DG
algebras are being considered as vehicles.
\end{problem}

\begin{problem}
Is there a link between the categories $\Dcatc(R)$ which have AR
qui\-vers consisting of $\BZ A_{\infty}$-components, and the appearence
of $\BZ A_{\infty}$-components in representation theory?

See for instance \cite[thm.\ 4.17.4]{BensonI}.
\end{problem}

\begin{problem}
If a simply connected topological space $X$ has
$\dim_{\BQ}\Homology^*(X;\BQ) = 2$, then it has the same rational
homotopy type as a sphere of dimension $\geq 2$.  Theorem
\ref{thm:Chain_Gamma} implies that these are the only simply connected
spaces with Poincar\'{e} duality for which the AR quiver of
$\Dcatc(\Chain^*(X;\BQ))$ has only finitely many components.

Is this linked to any topological property which is special to these
spaces?
\end{problem}

\begin{problem}
Let $X$ and $T$ be topological spaces.  Suppose that $X$ is simply
connected with $\dim_k \Homology^*(X;k) < \infty$, that $T$ has
$\dim_k \Homology^i(T;k) < \infty$ for each $i$, and let
\[
  F \rightarrow T \rightarrow X
\]
be a fibration.  The induced morphism $\Chain^*(X;k) \rightarrow
\Chain^*(T;k)$ turns $\Chain^*(T;k)$ into a DG
left-$\Chain^*(X;k)$-module.  By \cite[thm.\ 7.5]{FHTbook} there is a
quasi-isomorphism $k \LTensor_{\Chain^*(X;k)} \Chain^*(T;k) \simeq
\Chain^*(F;k)$, and this implies that if $\dim_k \Homology^*(F;k) <
\infty$ then $\Chain^*(T;k)$ is an object of $\Dcatc(\Chain^*(X;k))$.

Hence $\Chain^*(T;k)$ corresponds to a collection of vertices with
multiplicities of the AR quiver $\Gamma$ of $\Dcatc(\Chain^*(X;k))$.
If $X$ has Poincar\'{e} duality over $k$, then the theory of this
paper gives information about the structure of $\Gamma$, both locally
and globally.

Does this have applications to the topological theory of fibrations?

Do the structural results on $\Gamma$ correspond to structural
results on topological fibrations?
\end{problem}

\begin{problem}
By considering the fibration $F \rightarrow T \rightarrow X$, looking
at $\Chain^*(T;k)$ as a DG left-$\Chain^*(X;k)$-module, and using the
theory of this paper, one is in effect doing ``AR theory
with topological spaces''.

Is there a way to do so directly with the spaces themselves?
\end{problem}

\begin{problem}
\label{prob:CY}
If $X$ is a topological space with $\dim_k \Homology^*(X;k) < \infty$
and Poincar\'{e} duality over the field $k$ of characteristic $0$,
then $\Dcatc(\Chain^*(X;k))$ is an $n$-Calabi-Yau category for some
$n$ by Theorem \ref{thm:Chain_CY}.  More generally, if $R$ is the DG
algebra from setup \ref{set:blanket} and $R$ is commutative and
Gorenstein, then $\Dcatc(R)$ is a $d$-Calabi-Yau category.

These categories appear to behave quite differently from higher cluster
categories which are standard examples of Calabi-Yau categories.  For
instance, an $m$-cluster category contains an $m$-cluster tilting
object in terms of which every other object can be built in a single
step; this seems to be far from true for $\Dcatc(\Chain^*(X;k))$ and
$\Dcatc(R)$.

Which role do $\Dcatc(\Chain^*(X;k))$ and $\Dcatc(R)$ play in the
taxonomy of Calabi-Yau categories?

In the context of Calabi-Yau categories, there is a ``Morita'' theorem
for higher cluster categories, see \cite[thm.\ 4.2]{KellerReiten}.  Is
there also a Morita theorem for the categories $\Dcatc(R)$?
\end{problem}

\appendix

\section{Differential Graded homological algebra}
\label{app:DG}

This appendix is an introduction to Differential Graded (DG)
homological algebra, written for a reader who is already familiar with
the formalism of derived categories of rings.  Some useful references
are \cite{AFH}, \cite[appendix]{FHTpaper}, \cite[chps.\ 3, 6, 18, 19,
20]{FHTbook}, \cite{KellerDG}, and \cite{KellerICM}.

Let $k$ be a commutative ring.

\begin{definition}
[DG algebras and modules]
\label{def:DG}
A Differential Graded (DG) algebra $R$ over $k$ is a complex of
$k$-modules equipped with a product which
\begin{itemize}
  \item  turns $R$ into a $\BZ$-graded $k$-algebra, and
  \item  satisfies the Leibniz rule $\partial^R(rs) = \partial^R(r)s +
    (-1)^i r\partial^R(s)$ when $r$ is in $R^i$.
\end{itemize}

A DG left-$R$-module $M$ is a complex of $k$-modules equipped with an
$R$-scalar multiplication which
\begin{itemize}
  \item  turns it into a graded module over the underlying graded
    algebra of $R$, and
  \item  satisfies the Leibniz rule $\partial^M(rm) = \partial^R(r)m +
    (-1)^i r\partial^M(m)$ when $r$ is in $R^i$.
\end{itemize}

DG right-$R$-modules and DG bi-modules are defined analogously.  Note
that $R$ itself is an important DG bi-$R$-module.  Sometimes the
notations ${}_{R}M$ and $N_R$ are used to emphasize that $M$ is a DG
left-$R$-module, $N$ a DG right-$R$-module.

The opposite DG algebra of $R$ is denoted by $R^{\opp}$.  Its product
$\cdot$ is given by $r \cdot s = (-1)^{ij}sr$ in terms of the product
of $R$, when $r$ and $s$ are elements of $R^i$ and $R^j$.  DG
right-$R$-modules can be viewed as DG left-$R^{\opp}$-modules.
\end{definition}

\begin{remark}
[DG homological algebra]
It is possible to do homological algebra with DG modules.  A test case
is when the DG algebra $R$ is concentrated in degree zero, that is,
when $R^i = 0$ for $i \neq 0$.  Then the zeroth component, $R^0$, is
an ordinary $k$-algebra, DG left-$R$-modules are just complexes of
left-$R^0$-modules, and DG homological algebra over $R$ specializes to
ordinary homological algebra over $R^0$.
\end{remark}

\begin{definition}
[inf, sup, and amp]
\label{def:sup_and_inf}
The infimum and supremum of a DG module are
\[
  \inf M = \inf \{\, i \,|\, \Homology^i\!M \neq 0 \,\}, \;\;\;
  \sup M = \sup \{\, i \,|\, \Homology^i\!M \neq 0 \,\},
\]
and the amplitude is
\[
  \amp M = \sup M - \inf M.
\]
Note that $\inf 0 = \infty$, $\sup 0 = -\infty$, and $\amp 0 =
-\infty$. 
\end{definition}

\begin{definition}
[Morphisms, suspensions, and mapping cones]
\label{def:morphisms}
The notation $(-)^{\natural}$ is used for the operation of forgetting
the differential.  It sends DG algebras and DG modules to graded
algebras and graded modules.

A morphism $\rho : R \rightarrow S$ of DG algebras is a homomorphism
$R^{\natural} \rightarrow S^{\natural}$ of the underlying graded
algebras which respects the differentials, $\rho\partial^R =
\partial^S\rho$.

A morphism $\mu : M \rightarrow N$ of DG $R$-modules is a homomorphism
$M^{\natural} \rightarrow N^{\natural}$ of the underlying graded
$R^{\natural}$-modules which respects the differentials,
$\mu\partial^M = \partial^N \mu$.

The morphism $\mu$ is called null homotopic if there exists a
homomorphism $\theta : M^{\natural} \rightarrow N^{\natural}$ of
degree $-1$ of graded $R^{\natural}$-modules such that $\mu =
\partial^N\theta + \theta\partial^M$.  Morphisms $\mu$ and
$\mu^{\prime}$ are called homotopic if $\mu - \mu^{\prime}$ is null
homotopic.

Suspension of complexes is denoted by $\Sigma$.  Suspensions and
mapping cone constructions of DG left-$R$-modules inherit DG
left-$R$-module structures.  Some sign issues are involved here as
well as in other parts of the theory; I will not go into details but
refer the reader to the references given.
\end{definition}

\begin{definition}
[Cohomology]
\label{def:homology}
The product on $R$ and the scalar multiplication of $R$ on $M$ induces
a product on the cohomology $\Homology\!R$ and a scalar multiplication
of $\Homology\!R$ on $\Homology\!M$, whereby $\Homology\!R$ becomes a
graded $k$-algebra and $\Homology\!M$ becomes a graded
$\Homology\!R$-module.

A morphism $\mu$ of DG modules is called a quasi-isomorphism if the
induced homomorphism $\Homology\!\mu$ of graded $\Homology\!R$-modules
is an isomorphism.
\end{definition}

\begin{definition}
[Homotopy and derived categories]
\label{def:D}
The homotopy category $\sK(R)$ has as objects the DG left-$R$-modules,
and as morphisms the homotopy classes of morphisms of DG modules.

The derived category $\Dcat(R)$ is obtained from $\sK(R)$ by formally
inverting the quasi-isomorphisms.  Both $\sK(R)$ and $\Dcat(R)$ are
triangulated categories with distinguished triangles induced by the
mapping cone construction.

The categories $\sK(R)$ and $\Dcat(R)$ have set indexed coproducts
which are given by ordinary direct sums.

The categories $\sK(R^{\opp})$ and $\Dcat(R^{\opp})$ can be viewed as
being the homotopy and derived categories of DG right-$R$-modules.

A quasi-isomorphism $R \rightarrow S$ of DG algebras induces an
equivalence of triangulated categories $\Dcat(S) \rightarrow
\Dcat(R)$ given by change of scalars. 

Denote by $\Dcatf(R)$ the full subcategory of $\Dcat(R)$ consisting of
DG modules $M$ with $\Homology\!M$ finitely presented over $k$.

Denote by $\Dcatc(R)$ the full subcategory of $\Dcat(R)$ consisting of
DG modules which are finitely built in $\Dcat(R)$ from $R$ using
distinguished triangles, (de)suspensions, coproducts, and direct
summands; these are the so-called compact objects of $\Dcat(R)$.
\end{definition}

\begin{definition}
[Hom and Tensor]
\label{def:Hom_and_Tensor}
If $M$ and $N$ are DG left-$R$-modules, then there is a graded
$k$-module $\Hom_{R^{\natural}}(M^{\natural},N^{\natural})$ of graded
$R^{\natural}$-homomorphisms $M^{\natural} \rightarrow N^{\natural}$
of different degrees.  This can be turned into a complex $\Hom_R(M,N)$
with the differential induced by the differentials of $M$ and $N$.
Note that $\Hom_R(M,N)^{\natural} =
\Hom_{R^{\natural}}(M^{\natural},N^{\natural})$.

If $A$ is a DG right-$R$-module and $B$ is a DG left-$R$-module, then
the tensor product $A^{\natural} \otimes_{R^{\natural}} B^{\natural}$
is a graded $k$-module.  It can be turned into a complex $A \otimes_R
B$ with the differential induced by the differentials of $A$ and $B$.
Note that $(A \otimes_R B)^{\natural} = A^{\natural}
\otimes_{R^{\natural}} B^{\natural}$.

These constructions induce functors between homotopy categories, and
there are induced derived functors
\[
  \RHom_R(-,-) : \Dcat(R) \times \Dcat(R) \rightarrow \Dcat(k)
\]
and
\[
  - \LTensor_R - : \Dcat(R^{\opp}) \times \Dcat(R) \rightarrow \Dcat(k).
\]
These are often computed using resolutions.  For instance, let $M$ be
a DG left-$R$-module and let $P \rightarrow M$ be a K-projective
resolution of $M$.  This is a quasi-isomorphism of DG modules for
which $P$ is K-projective, that is, $\Hom_R(P,-)$ preserves
quasi-isomorphisms.  Then $\Hom_R(P,-)$ is a well defined functor
$\Dcat(R) \rightarrow \Dcat(k)$, and there is an equivalence of
functors $\RHom_R(M,-) \simeq \Hom_R(P,-)$.

The functor $\RHom_R$ has the useful property
\[
  \Homology^0 \RHom_R(M,N) \cong \Hom_{\Dcat(R)}(M,N);
\]
more generally, the notation
\[
  \Homology(\RHom_R(M,N)) = \Ext_R(M,N)
\]
is used so $\Homology^i \RHom_R(M,N) = \Ext_R^i(M,N)$.

The functors $\RHom$ and $\LTensor$ are compatible with DG bi-modules.
For instance, if $A$ is a DG bi-$R$-module then $A \LTensor_R B$
inherits a left-$R$-structure from $A$, so there is a functor
\[
  A \LTensor_R - : \Dcat(R) \rightarrow \Dcat(R).
\]
\end{definition}

\begin{setup}
Now consider the special case of this paper: $k$ is a field and $R$ is
a DG algebra over $k$ which has the form
\[
  \cdots \rightarrow 0 \rightarrow k \rightarrow 0 \rightarrow R^2
  \rightarrow R^3 \rightarrow \cdots,
\]
that is, $R^{<0} = 0$, $R^0 = k$, and $R^1 = 0$.  It will also be
assumed that $\dim_k R < \infty$, and $d$ will be defined by
\[
  d = \sup R.
\]
\end{setup}

\begin{definition}
[Duality]
\label{def:dual}
By $\dual(-)$ will be denoted the functor $\Hom_k(-,k)$.  When applied
to graded objects, it is understood to be applied degreewise.  It
sends DG left-$R$-modules to DG right-$R$-modules and vice versa.  It
is well defined at the level of homotopy and derived categories.
\end{definition}

\begin{remark}
\label{rmk:Df}
Over a DG algebra of the present special form, $k \cong R/R^{\geq 1}$
is a DG bi-$R$-module.  Moreover, $\Dcatf(R)$ is the full subcategory
of $\Dcat(R)$ consisting of objects $M$ with $\dim_k \Homology\!M <
\infty$, and $\Dcatf(R)$ consists precisely of the objects finitely
built from ${}_{R}k$.  This can be shown using the first two parts of
the following result on truncations, the proof of which uses only
linear algebra over the field $k$; see \cite[lem.\ 3.4]{artop} and
\cite[ex.\ 6, p.\ 146]{FHTbook}.
\end{remark}

\begin{lemma}
[Truncations]
\label{lem:truncations}
\begin{enumerate}

  \item  If $M$ is a DG left-$R$-module with $\inf M$ finite, then
    there exists a quasi-isomorphism of DG left-$R$-modules $U
    \rightarrow M$ with $U^i = 0$ for $i < \inf M$.

  \item  If $N$ is a DG left-$R$-module with $\sup N$ finite, then
    there exists a quasi-isomorphism of DG left-$R$-modules $N
    \rightarrow V$ with $V^j = 0$ for $j > \sup N$.

  \item  The DG algebra $R$ is quasi-isomorphic to a quotient DG
    algebra $S$ with $S^{>d} = 0$.

\end{enumerate}
\end{lemma}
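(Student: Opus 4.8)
The plan is to realize all three statements as explicit \emph{soft truncations}, carried out directly on DG modules (and, in (3), on the DG algebra) rather than merely on underlying complexes. Two features of the present $R$ make this painless: first, $k$ is a field, so cycles and boundaries of any DG module split off as $k$-subspaces; second, $R^{<0} = 0$ and $R^0 = k$, which forces a single-degree $k$-subspace to be stable under the $R$-action up to landing in strictly higher degrees. Together these let one cut a DG module off at $\inf$ from below, or at $\sup$ from above, without disturbing cohomology, and similarly for $R$ itself.

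For (1), put $s = \inf M$ (an integer, since $M \not\simeq 0$), and choose a $k$-linear complement $U^s$ of the boundary subspace $\partial(M^{s-1}) \subseteq M^s$. Define $U \subseteq M$ by $U^i = 0$ for $i < s$, by the chosen $U^s$ in degree $s$, and by $U^i = M^i$ for $i > s$. I would first check that $U$ is a sub-DG-module: the only condition that is not automatic is $R^j \cdot U^s \subseteq U^{s+j}$, and this holds because $R^0 = k$ preserves the subspace $U^s$, while for $j \geq 1$ the product lands in degree $> s$, where $U$ and $M$ agree. Then the inclusion $U \hookrightarrow M$ is a quasi-isomorphism by the degreewise cohomology comparison: above degree $s+1$ the complexes coincide; in degree $s+1$ the boundaries agree since $\partial(U^s) = \partial(M^s)$ by the choice of complement; in degree $s$ one checks that $U^s$ meets the cycles of $M$ in a complement of $\partial(M^{s-1})$ inside those cycles, so $\Homology^s(U) \to \Homology^s(M)$ is an isomorphism; below $s$ both sides vanish.

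For (2) the argument is the evident dual. With $t = \sup N$, choose a $k$-linear complement $U'^t$ of the cycles $\Ker(\partial\colon N^t \to N^{t+1})$ in $N^t$, let $U'$ be the sub-DG-module with $U'^i = 0$ for $i < t$, the chosen subspace in degree $t$, and $U'^i = N^i$ for $i > t$ (again a sub-DG-module for exactly the same reason), and set $V = N/U'$. Then $V^i = 0$ for $i > t$, and $N \to V$ is a quasi-isomorphism by a parallel cohomology check. (When $N$ is degreewise finite-dimensional one could instead deduce (2) from (1) applied to $\dual\!N$ over $R^{\opp}$, but the direct argument needs no such hypothesis.) For (3) I would apply this truncation to $N = R$ with $t = d$: choose a $k$-linear complement $W$ of $\Ker(\partial\colon R^d \to R^{d+1})$ in $R^d$ and put $I = W \oplus R^{>d}$. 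Since $R^{>d}$ is already a two-sided DG ideal, $R^0 = k$ keeps $W$ stable, and $R^{\geq 1}\cdot W$ and $W\cdot R^{\geq 1}$ fall into degrees $> d$, the subspace $I$ is a two-sided DG ideal; hence $S = R/I$ is a quotient DG algebra with $S^{>d} = 0$ and $\dim_k S < \infty$, and the projection $R \to S$, being the quotient map of (2) for $N = R$, is a quasi-isomorphism of DG algebras. (The degenerate case $d = 0$ is covered by the same construction, giving $S = k$.)

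None of this is deep: it is linear algebra over the field $k$. The only points that genuinely need attention — and where the hypotheses $R^{<0} = 0$, $R^0 = k$ are used — are verifying that each truncated object is a bona fide DG sub- or quotient module (respectively, that $I$ is a two-sided DG ideal), and choosing the complements so precisely that the comparison map is a quasi-isomorphism rather than merely a morphism. I expect the DG-ideal check in (3) to be the fussiest step, since it is the only one touching the multiplicative structure.
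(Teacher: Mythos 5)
Your proof is correct and is exactly the elementary soft-truncation argument the paper has in mind: the paper itself does not write out a proof of Lemma \ref{lem:truncations} but defers to \cite[lem.\ 3.4]{artop} and \cite[exa.\ 6, p.\ 146]{FHTbook}, noting only that the argument ``uses only linear algebra over the field $k$''. Your explicit checks --- that the chosen complements yield genuine sub- and quotient DG modules because $R^{<0}=0$ and $R^0=k$, and that $I=W\oplus R^{>d}$ in part (3) is a two-sided DG ideal --- supply precisely the details being alluded to.
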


\begin{definition}
[Semi-free DG modules]
A DG left-$R$-module $F$ is called semi-free if it permits a semi-free
filtration, that is, a filtration by DG left-$R$-modules
\[
  0 = F \langle -1 \rangle
  \subseteq F \langle 0 \rangle
  \subseteq F \langle 1 \rangle
  \subseteq \cdots \subseteq F
\]
where $F = \bigcup_i F \langle i \rangle$ and where each $F \langle i
\rangle / F \langle i-1 \rangle$ is a direct sum of (de)suspensions of
${}_{R}R$.

If $\partial^F(F) \subseteq R^{\geq 1} \cdot F$, then $F$ is called
minimal.  If $M$ is in $\Dcat(R)$ then a (minimal) semi-free
resolution of $M$ is a quasi-isomorphism $F \rightarrow M$ where $F$
is (minimal) semi-free.
\end{definition}

The following lemma collects useful facts; for references see
\cite{AFH}, \cite{BN}, \cite[appendix]{FHTpaper}, \cite[sec.\ 
6]{FHTbook}, \cite[sec.\ 3]{artop}, \cite[sec.\ 3]{KellerDG}, and
\cite{Spaltenstein}.

\begin{lemma}
[Semi-free resolutions]
\label{lem:semi-free}
\begin{enumerate}

  \item  Each $M$ in $\Dcat(R)$ has a semi-free re\-so\-lu\-ti\-on.

  \item  A semi-free DG module is {\rm K}-projective, so if $F$ is a
    semi-free resolution of $M$ then $\RHom_R(M,-) \simeq
    \Hom_R(F,-)$ and $- \LTensor_R M \simeq - \otimes_R F$.
    
  \item  Each $M$ in $\Dcatf(R)$ has a minimal semi-free resolution
    $F$, and for each such resolution there are finite numbers
    $\beta_i$ such that
\[
  F^{\natural} \cong \bigoplus_{i \leq -\inf M}
                     \Sigma^i(R^{\natural})^{(\beta_i)},
\]
    where $(R^{\natural})^{(\beta_i)}$ is a direct sum of $\beta_i$
    copies of $R^{\natural}$.

  \item  Let $M$ in $\Dcatf(R)$ have minimal semi-free resolution $F$.
    Then $M$ is in $\Dcatc(R)$ if and only the numbers $\beta_i$ from
    part (3) satisfy $\beta_i = 0$ for $i \ll 0$.
    
  \item  If $F$ is minimal semi-free then $\Hom_R(F,k)$ has zero
    differential, so
\[
  \Homology(\Hom_R(F,k))
  \cong \Hom_R(F,k)^{\natural} 
  = \Hom_{R^{\natural}}(F^{\natural},k^{\natural})
\]
as graded $k$-vector spaces.

\end{enumerate}
\end{lemma}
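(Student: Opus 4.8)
The plan is to handle the five parts separately, using only formal DG homological algebra for (1) and (2) and the special shape of $R$ ($R^{<0}=0$, $R^0=k$, $R^1=0$, $\dim_k R<\infty$) for (3)--(5). For (1) I would build a semi-free resolution of $M$ by iterated cell attachment: pick a coproduct $F\langle 0\rangle$ of (de)suspensions of ${}_RR$ with a morphism to $M$ that is surjective on cohomology, then inductively enlarge $F\langle i-1\rangle$ to $F\langle i\rangle$ by adjoining further such coproducts so as to kill the kernel of $\Homology(F\langle i-1\rangle)\to\Homology\!M$ while keeping surjectivity; the union $F=\bigcup_i F\langle i\rangle$ is then semi-free and the map $F\to M$ a quasi-isomorphism. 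For (2), each subquotient $F\langle i\rangle/F\langle i-1\rangle$ is a coproduct of (de)suspensions of ${}_RR$, hence K-projective; K-projectivity passes through the extensions in the filtration, and the passage to the union is controlled because the tower $\Hom_R(F\langle i\rangle,N)$ is degreewise surjective, so its first derived inverse limit vanishes and $\Hom_R(F,N)$ is the honest inverse limit. K-projectivity of $F$ then yields $\RHom_R(M,-)\simeq\Hom_R(F,-)$ and, by the symmetric argument, $-\LTensor_R M\simeq -\otimes_R F$.

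For (3) I would first replace $M$ by a quasi-isomorphic DG module with $M^i=0$ for $i<\inf M$ using Lemma \ref{lem:truncations}(1), and then run the construction of (1) choosing at each cohomological degree a \emph{minimal} set of new free generators --- possible because $R^0=k$ is a field, so the relevant cocycles modulo coboundaries form $k$-vector spaces and one may adjoin generators mapping to a chosen basis. The first generators are needed in cohomological degree $\inf M$ and appear as summands $\Sigma^i R^{\natural}$ with $i=-\inf M$, while generators attached later lie in strictly higher degrees and hence have strictly smaller $i$; this gives the bound $i\leq-\inf M$. Each $\beta_i$ is finite because $\dim_k\Homology\!M<\infty$ and $\dim_k R<\infty$ bound how many generators can be attached in any one degree, and minimality $\partial^F(F)\subseteq R^{\geq1}\cdot F$ is automatic from the minimal choice, a generator whose differential had a component in $R^0$ being redundant.

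For (4), the ``if'' direction is immediate: finitely many nonzero $\beta_i$, each finite, make $F$ a finite semi-free module, hence finitely built from ${}_RR$, so $F$ --- and therefore $M$ --- lies in $\Dcatc(R)$. For the converse I would use that $\Dcatc(R)$ is the smallest thick subcategory of $\Dcat(R)$ containing ${}_RR$ (Definition \ref{def:D}) and check that the class of $M\in\Dcatf(R)$ whose minimal semi-free resolution has finite total rank is thick: it is closed under (de)suspension trivially; under direct summands because minimal semi-free resolutions over this $R$ are unique up to isomorphism (again since $R^0$ is a field), so a summand of $M$ has minimal resolution a summand of $F$; and under mapping cones because the cone has a generally non-minimal semi-free resolution built from those of the other two terms of the triangle, of total rank at most their sum, from which the minimal one is obtained by splitting off a contractible summand and so has no larger rank. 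Since ${}_RR$ is its own minimal resolution, this class contains ${}_RR$, hence contains $\Dcatc(R)$, which proves ``only if''.

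Finally, (5) is a one-line computation: the differential on $\Hom_R(F,k)$ sends a graded $R^{\natural}$-linear $\phi$ to $\partial^k\circ\phi\pm\phi\circ\partial^F$; the first term vanishes because $k=R/R^{\geq1}$ carries the zero differential (as $R^1=0$ forces $\partial^R(R^0)=0$), and the second vanishes because minimality gives $\partial^F(F)\subseteq R^{\geq1}\cdot F$ while every left-$R$-linear map into $k$ annihilates $R^{\geq1}\cdot F$, since $R^{\geq1}\cdot k=0$. Hence $\Hom_R(F,k)$ equals its own cohomology, which is $\Hom_{R^{\natural}}(F^{\natural},k^{\natural})$ as graded $k$-vector spaces. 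I expect the real work to sit in parts (3) and (4): establishing existence and uniqueness of minimal semi-free resolutions over $R$ (so that the $\beta_i$ are genuine invariants of $M$) and the subadditivity of total rank under mapping cones, which is exactly what powers the thickness argument. The vanishing of the first derived inverse limit in (2) is the only other point needing a moment's care; everything else is classical and contained in the references listed before the lemma.
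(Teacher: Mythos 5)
The paper offers no proof of this lemma at all: it is presented as a collection of background facts with pointers to \cite{AFH}, \cite{FHTpaper}, \cite{FHTbook}, \cite{artop}, \cite{KellerDG} and \cite{Spaltenstein}, so there is nothing in the text to compare your argument against line by line. Your reconstruction follows the standard route of those references and I find it correct: cell attachment and passage to the colimit for (1); K-projectivity via the filtration together with the degreewise split tower, hence vanishing $\lim^1$, for (2); degree-by-degree minimal attachment of generators, exploiting $R^0=k$ and $R^1=0$, for (3); and the observation that an $R$-linear map into $k$ kills $R^{\geq 1}\cdot F \supseteq \partial^F(F)$ for (5). You also correctly locate the genuinely nontrivial points: existence and uniqueness of minimal resolutions rests on the Nakayama-type fact that an endomorphism of a minimal $F$ inducing the identity modulo $R^{\geq 1}F$ is invertible, which is available here because part (3) makes $F^{\natural}$ bounded below and locally finite as a graded module. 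For the ``only if'' half of (4) there is a shortcut worth knowing that bypasses uniqueness of minimal resolutions and the cancellation of contractible summands entirely: by (3) and (5) the total number $\sum_i\beta_i$ of generators of $F$ equals $\dim_k\Ext_R(M,k)$, which is an invariant of $M$; the full subcategory of objects with $\dim_k\Ext_R(-,k)<\infty$ is visibly thick (long exact sequence, suspensions, direct summands) and contains ${}_{R}R$ since $\Ext_R(R,k)\cong k$, hence contains all of $\Dcatc(R)$. This is essentially how the paper itself exploits part (4) in the proof of Theorem \ref{thm:Gorenstein}. Your thick-subcategory argument via ranks of minimal resolutions is also valid, just heavier.
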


\section{Auslander-Reiten theory for triangulated categories}
\label{app:AR}

This appendix is a brief introduction to the version of
Auslander-Reiten (AR) theory used in the rest of the paper.  Some
useful references are \cite{ARS}, \cite{BensonI}, \cite{Happel},
\cite{KrauseARZ}, \cite{KrauseAR}, and \cite{ReitenVandenBergh}, with
\cite{Happel} being the source of the theory.

Let $\sT$ be a triangulated category.  The following definition is
taken from \cite[def.\ 2.1]{KrauseAR}; it generalizes the earlier
definition from \cite[3.1]{Happel}.

\begin{definition}
[AR triangles]
An AR triangle in $\sT$ is a distinguished triangle
\begin{equation}
\label{equ:AR}
  M
  \stackrel{\mu}{\rightarrow} N
  \stackrel{\nu}{\rightarrow} P
  \stackrel{\pi}{\rightarrow}
\end{equation}
for which
\begin{itemize}

  \item  Each morphism $M \rightarrow N^{\prime}$ which is not a split
    monomorphism factors through $\mu$.

  \item  Each morphism $N^{\prime} \rightarrow P$ which is not a split
    epimorphism factors through $\nu$.

  \item  $\pi \neq 0$.

\end{itemize}
\end{definition}

In an AR triangle, the end terms determine each other up to
isomorphism by \cite[prop.\ 3.5(i)]{Happel}, so the following
definition makes sense.

\begin{definition}
[AR translation]
Let $P$ be an object of $\sT$ and suppose that there is an AR triangle
$M \rightarrow N \rightarrow P \rightarrow$.  Then $M$ is denoted by
$\tau P$, and the operation $\tau$ which is defined up to isomorphism
is called the AR translation of $\sT$.  
\end{definition}

In an AR triangle, the end terms have local endomorphism rings by
\cite[lem.\ 2.3]{KrauseAR}; this explains the following terminology.

\begin{definition}
The triangulated category $\sT$ is said to have right AR triangles if,
for each object $P$ with local endomorphism ring, there is an 
AR triangle \eqref{equ:AR}.

The category $\sT$ is said to have left AR triangles if, for each
object $M$ with local endomorphism ring, there is an AR triangle
\eqref{equ:AR}.

The category $\sT$ is said to have AR triangles if it has right and
left AR triangles.
\end{definition}

\begin{definition}
[The AR quiver]
A morphism in $\sT$ is called irreducible if it is not an isomorphism,
but has the property that when it is factored as $\rho\sigma$, then
either $\rho$ is a split epimorphism or $\sigma$ is a split
monomorphism.

The AR quiver $\Gamma(\sT)$ of $\sT$ has one vertex $[M]$ for each
isomorphism class of objects with local endomorphism rings, and one
arrow $[M] \rightarrow [N]$ when there is an irreducible morphism $M
\rightarrow N$.

If $\sT$ has right AR triangles, then the AR translation $\tau$
induces a map from the set of vertices of $\Gamma(\sT)$ to itself.  By
abuse of notation, this map is also referred to as the AR translation
and denoted by $\tau$.
\end{definition}

\begin{setup}
Now consider the special case of this paper: $k$ is a field and
$\sT$ is $k$-linear and has finite dimensional $\Hom$ spaces and
split idempotents; cf.\ Proposition \ref{pro:DcandDfKrullSchmidt}. 

Then $\sT$ is a Krull-Schmidt category by \cite[p.\ 52]{Ringel}; that
is, each indecomposable object has local endomorphism ring and each
object splits into a finite direct sum of indecomposable objects which
are unique up to isomorphism.
\end{setup}

The following lemma holds by \cite[prop.\ 3.5]{Happel}.

\begin{lemma}
\label{lem:Happel}
Let $M \rightarrow N \rightarrow P \rightarrow$ be an AR triangle
and let $N \cong \coprod_i N_i$ where each $N_i$ is indecomposable.
Then the following statements are equivalent for an indecomposable
object $N^{\prime}$.
\begin{enumerate}

  \item  There is an irreducible morphism $M \rightarrow N^{\prime}$.

  \item  There is an irreducible morphism $N^{\prime} \rightarrow P$.

  \item  There is an $i$ such that $N^{\prime} \cong N_i$.

\end{enumerate}
\end{lemma}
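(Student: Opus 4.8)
The plan is to show that the indecomposable objects $N'$ admitting an irreducible morphism $M \to N'$, the indecomposable objects $N'$ admitting an irreducible morphism $N' \to P$, and the indecomposable summands $N_i$ of $N$ all coincide. The engine is the pair of ``almost split'' properties built into the definition of an AR triangle $M \stackrel{\mu}{\rightarrow} N \stackrel{\nu}{\rightarrow} P \stackrel{\pi}{\rightarrow}$: the morphism $\mu$ is left almost split (every morphism out of $M$ which is not a split monomorphism factors through $\mu$) and $\nu$ is right almost split (every morphism into $P$ which is not a split epimorphism factors through $\nu$). Throughout I would use that $M$ and $P$ have local endomorphism rings and hence are indecomposable and nonzero, that $\mu$ is not a split monomorphism and $\nu$ is not a split epimorphism (otherwise the triangle would split and $\pi$ would vanish), and that $\sT$ is Krull--Schmidt.

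The heart of the matter is that $N' \cong N_i$ implies there is an irreducible morphism $N' \to P$, namely the component $\nu_i := \nu\iota_i \colon N_i \to P$, where $\iota_i \colon N_i \to N$ is the inclusion. This $\nu_i$ is not an isomorphism (otherwise $\iota_i\nu_i^{-1}$ would split $\nu$) and not a split epimorphism (a split epimorphism from the indecomposable $N_i$ onto the nonzero $P$ would be an isomorphism). Given a factorization $\nu_i = \rho\sigma$ with $\sigma \colon N_i \to L$ and $\rho \colon L \to P$, suppose $\rho$ is not a split epimorphism; then $\rho = \nu\eta$ for some $\eta \colon L \to N$, so $\nu(\eta\sigma - \iota_i) = 0$. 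Applying $\Hom_{\sT}(N_i,-)$ to the distinguished triangle gives the exact sequence $\Hom_{\sT}(N_i,M) \stackrel{\mu_*}{\rightarrow} \Hom_{\sT}(N_i,N) \stackrel{\nu_*}{\rightarrow} \Hom_{\sT}(N_i,P)$, whence $\eta\sigma - \iota_i = \mu\xi$ for some $\xi \colon N_i \to M$. Composing with the projection $p_i \colon N \to N_i$ gives $p_i\eta\sigma = \id_{N_i} + (p_i\mu)\xi$. Here $(p_i\mu)\xi$ is a non-invertible endomorphism of $N_i$: were it invertible, $p_i\mu$ would be a split epimorphism, hence an isomorphism as $M$ is indecomposable and $N_i$ nonzero, forcing $\mu$ to be a split monomorphism. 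So $(p_i\mu)\xi$ lies in $\rad\End_{\sT}(N_i)$, the element $\id_{N_i} + (p_i\mu)\xi$ is invertible, and $\sigma$ is a split monomorphism. Thus $\nu_i$ is irreducible.

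Conversely, if $g \colon N' \to P$ is irreducible with $N'$ indecomposable, then $g$ is not a split epimorphism (again because $N'$ is indecomposable and $P$ nonzero), so $g = \nu\eta$ for some $\eta \colon N' \to N$; since $g$ is irreducible and $\nu$ is not a split epimorphism, $\eta$ must be a split monomorphism, so $N'$ is a direct summand of $N = \coprod_i N_i$ and Krull--Schmidt gives $N' \cong N_i$ for some $i$. This establishes the equivalence of the second and third conditions. The equivalence of the first and third conditions is formally dual, obtained by reversing all arrows and using that $\mu$ is left almost split, or equivalently by applying the equivalence just proved to the AR triangle which $M \to N \to P$ becomes in the opposite category.

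I expect the main obstacle to be the irreducibility of the components $\nu_i$, specifically the passage from ``$\rho$ is not a split epimorphism'' to ``$\sigma$ is a split monomorphism'': this requires chaining the factorization of $\rho$ through $\nu$, the exactness of $\Hom_{\sT}(N_i,-)$ on the triangle in order to absorb the error term $\eta\sigma - \iota_i$ into $\Image\mu_*$, and the local-ring observation that $\id_{N_i}$ plus a non-invertible endomorphism is invertible. The remaining ingredients — the Krull--Schmidt reductions to indecomposable summands, the fact that split monomorphisms or epimorphisms involving an indecomposable object and a nonzero object are isomorphisms, and the fact that a non-split distinguished triangle has nonzero connecting morphism — are routine.
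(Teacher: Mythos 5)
Your proof is correct; the paper itself offers no argument for this lemma but simply cites \cite[prop.\ 3.5]{Happel}, and your reasoning is essentially the standard proof found there (the component $\nu\iota_i$ is shown irreducible by factoring a non-split-epi second factor through $\nu$, lifting the discrepancy through $\mu$ via the long exact $\Hom$ sequence, and using locality of $\End(N_i)$, with the other implications following from the almost split properties and Krull--Schmidt). No gaps to report.
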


Hence if $\sT$ has AR triangles, knowledge of these triangles implies
knowledge of the AR quiver $\Gamma(\sT)$.

\begin{definition}
[Stable translation quivers]
\label{def:stable_translation_quiver}
A stable translation quiver is a quiver equipped with an injective map
$\tau$ from the set of vertices to itself such that the number of
arrows from $\tau(t)$ to $s$ is equal to the number of arrows from $s$
to $t$.
\end{definition}

The following proposition follows easily from Lemma \ref{lem:Happel}.

\begin{proposition}
\label{pro:stable_translation_quiver}
If $\sT$ has AR triangles, then the AR translation $\tau$ turns the AR
quiver $\Gamma(\sT)$ into a stable translation quiver.
\end{proposition}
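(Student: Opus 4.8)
The plan is to read everything off Lemma~\ref{lem:Happel} together with the fact that, in a Krull-Schmidt category, the objects with local endomorphism ring are precisely the indecomposable ones: an object with a non-trivial direct sum decomposition carries a non-trivial idempotent in its endomorphism ring and so cannot have local endomorphism ring, while the converse is built into the definition of a Krull-Schmidt category. Recall that the vertices of $\Gamma(\sT)$ are the isomorphism classes of such objects, and that there is an arrow $[M] \rightarrow [N]$ exactly when there is an irreducible morphism $M \rightarrow N$; in particular between an ordered pair of vertices there is at most one arrow, so ``the number of arrows from $[M]$ to $[N]$'' is simply $0$ or $1$.

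First I would check that $\tau$ is a well defined injective self-map of the vertex set of $\Gamma(\sT)$. Given a vertex $[P]$, the object $P$ has local endomorphism ring, so since $\sT$ has right AR triangles there is an AR triangle $M \rightarrow N \rightarrow P \rightarrow$; its left hand term $M = \tau P$ again has local endomorphism ring by \cite[lem.\ 2.3]{KrauseAR}, hence $[\tau P]$ is a vertex, and it depends only on $[P]$ since the end terms of an AR triangle are determined up to isomorphism by \cite[prop.\ 3.5(i)]{Happel}. For injectivity, note that $\sT$ also has left AR triangles, so every vertex $[M]$ occurs as the left hand term of an AR triangle $M \rightarrow N \rightarrow Q \rightarrow$, with $Q$ again determined up to isomorphism; the assignment $[M] \mapsto [Q]$ is then a two-sided inverse of $\tau$ on vertices, so $\tau$ is bijective and in particular injective.

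Next I would verify the translation-quiver identity: for vertices $[P]$ and $[N^{\prime}]$, the number of arrows $[\tau P] \rightarrow [N^{\prime}]$ equals the number of arrows $[N^{\prime}] \rightarrow [P]$. Fix the AR triangle $\tau P \rightarrow N \rightarrow P \rightarrow$ and apply Lemma~\ref{lem:Happel} to it with the indecomposable object $N^{\prime}$: the existence of an irreducible morphism $\tau P \rightarrow N^{\prime}$ (condition (1)) is equivalent to the existence of an irreducible morphism $N^{\prime} \rightarrow P$ (condition (2)). Since $\Gamma(\sT)$ has at most one arrow between any ordered pair of vertices, this equivalence says exactly that there is an arrow $[\tau P] \rightarrow [N^{\prime}]$ if and only if there is an arrow $[N^{\prime}] \rightarrow [P]$, i.e.\ that the two arrow-counts agree. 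Combined with the previous paragraph, this shows that $\tau$ makes $\Gamma(\sT)$ into a stable translation quiver in the sense of Definition~\ref{def:stable_translation_quiver}.

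There is essentially no obstacle: all of the substance is packaged into Lemma~\ref{lem:Happel}, itself quoted from \cite[prop.\ 3.5]{Happel}. The only points needing a word of care are the identification of ``objects with local endomorphism ring'' with ``indecomposable objects'' in the Krull-Schmidt setting, so that $\tau$ sends vertices to vertices and Lemma~\ref{lem:Happel} applies verbatim, and the observation that the AR quiver as defined carries no multiple arrows, so that the numerical condition in Definition~\ref{def:stable_translation_quiver} reduces to the bi-implication supplied by Lemma~\ref{lem:Happel}.
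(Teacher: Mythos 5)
Your argument is correct and is exactly the route the paper intends: the paper's entire proof is the remark that the proposition ``follows easily from Lemma~\ref{lem:Happel}'', and your write-up simply supplies the details (well-definedness and injectivity of $\tau$ on vertices via existence and uniqueness of AR triangles, and the arrow-count identity via the bi-implication in Lemma~\ref{lem:Happel} together with the absence of multiple arrows).
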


\begin{definition}
[Serre functors]
\label{def:Serre}
A Serre functor for $\sT$ is an autoequivalence $S$ for which there
are natural isomorphisms
\[
  \dual(\Hom_{\sT}(M,N)) \cong \Hom_{\sT}(N,SM).
\]
\end{definition}

The following was proved in \cite[thm.\ I.2.4]{ReitenVandenBergh}.

\begin{theorem}
\label{thm:Serre}
The category $\sT$ has AR triangles if and only if it has a Serre
functor $S$.  If it does, then $\tau = \Sigma^{-1}S$ on indecomposable
objects.
\end{theorem}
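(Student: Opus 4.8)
This is the Reiten--Van den Bergh theorem, so I would reconstruct their argument; throughout I use the standing hypotheses that $\sT$ is $k$-linear Krull--Schmidt with finite dimensional $\Hom$ spaces. Both implications will be established, and in each case the formula $\tau = \Sigma^{-1}S$ on indecomposable objects will come out of the construction.

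\textbf{Serre functor $\Rightarrow$ AR triangles.} Let $S$ be a Serre functor, with natural isomorphisms $\eta_{M,N}\colon \dual\Hom_{\sT}(M,N) \to \Hom_{\sT}(N,SM)$. I would produce, for each indecomposable $P$, a right AR triangle ending at $P$. Since $\sT$ is Krull--Schmidt, $\End_{\sT}(P)$ is a finite dimensional local $k$-algebra; choose a $k$-linear functional $\lambda\colon \End_{\sT}(P)\to k$ that vanishes on $\rad\End_{\sT}(P)$ and satisfies $\lambda(1_P)\neq 0$ (factor through the residue division algebra). Put $\pi := \eta_{P,P}(\lambda)\colon P\to SP$ and complete it to a distinguished triangle $\Sigma^{-1}SP \stackrel{\mu}{\rightarrow} N \stackrel{\nu}{\rightarrow} P \stackrel{\pi}{\rightarrow} SP$. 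Naturality of $\eta$ in the second variable gives, for any $f\colon N'\to P$, the identity $\pi\circ f = \eta_{P,N'}\bigl(\lambda(f\circ -)\bigr)$ in $\Hom_{\sT}(N',SP)$; if $f$ is not a split epimorphism then each composite $f\circ g$ with $g\colon P\to N'$ lies in $\rad\End_{\sT}(P)$ (otherwise $f$ splits), so $\lambda(f\circ -) = 0$ and hence $\pi\circ f = 0$, which means $f$ factors through $\nu$. Together with $\pi\neq 0$, the standard criterion (cf.\ \cite[prop.\ 3.5]{Happel}, \cite{KrauseAR}) then forces this triangle to be an AR triangle; in particular $\Sigma^{-1}SP$ is indecomposable and equals $\tau P$. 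Thus $\sT$ has right AR triangles and $\tau = \Sigma^{-1}S$ on indecomposables. Since $S^{-1}$ is a Serre functor for $\sT^{\opp}$, the same argument there gives left AR triangles, so $\sT$ has AR triangles.

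\textbf{AR triangles $\Rightarrow$ Serre functor.} Assume $\sT$ has AR triangles. Then $\tau$ is a bijection on isomorphism classes of indecomposables, with inverse supplied by the left AR triangles $M\to N\to\tau^{-1}M\to$. Define $SP := \Sigma\tau P$ on indecomposables and extend additively, so that with the AR triangle $\tau M\to E\to M \stackrel{\pi_M}{\rightarrow} SM$ we have $\pi_M\colon M\to SM$. It remains to upgrade $S$ to an autoequivalence and to build natural isomorphisms $\dual\Hom_{\sT}(M,-)\cong \Hom_{\sT}(-,SM)$. The essential point is representability: $\dual\Hom_{\sT}(M,-)$ is a cohomological functor $\sT^{\opp}\to\Vect_k$ with finite dimensional values, and one must show it is represented by an object, which will be $SM$. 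Reversing the computation of the first part, I would single out a functional $\theta_M$ on $\Hom_{\sT}(M,SM)$ with $\theta_M(\pi_M)\neq 0$, then show $h\mapsto \theta_M(h\circ -)$ gives an isomorphism $\Hom_{\sT}(X,SM)\to \dual\Hom_{\sT}(M,X)$ for every $X$ --- reducing to $X$ indecomposable and inducting along the AR quiver, using that AR triangles control how both sides transform along irreducible morphisms. Yoneda uniqueness then makes $M\mapsto SM$ functorial and the isomorphisms natural in $M$; the dual (left) AR triangles yield a quasi-inverse, so $S$ is an autoequivalence, and again $\tau M = \Sigma^{-1}SM$.

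\textbf{Main obstacle.} The first implication is routine once the functional $\lambda$ is chosen. The real work is the representability statement in the second implication: showing that $\dual\Hom_{\sT}(M,-)$ is representable in a Krull--Schmidt category not assumed to have arbitrary coproducts, with representing object exactly $\Sigma\tau M$. This is the step where the existence of AR triangles is used in an essential rather than formal way, and it is where the bulk of the proof lies; I would carry it out by the component-wise induction over the AR quiver sketched above, as in \cite[sec.\ I.2]{ReitenVandenBergh}.
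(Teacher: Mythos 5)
First, a point of reference: the paper does not prove this theorem at all; it is stated in the appendix and attributed to \cite[thm.\ I.2.4]{ReitenVandenBergh}, so the only meaningful comparison is with that source. Your first implication (Serre functor $\Rightarrow$ AR triangles) is correct and essentially complete, and it is the standard argument: the functional $\lambda$ killing $\rad\End_{\sT}(P)$, the morphism $\pi=\eta_{P,P}(\lambda)$, and the naturality computation showing that $\pi\circ f=0$ for every non-split epimorphism $f$ are exactly the right ingredients. One small point should be made explicit: the criterion you invoke from \cite[prop.\ 3.5]{Happel} needs the left-hand term to have local endomorphism ring, and this holds because $\Sigma^{-1}S$ is an autoequivalence, so $\End_{\sT}(\Sigma^{-1}SP)\cong\End_{\sT}(P)$; indecomposability of $\Sigma^{-1}SP$ is an input to the criterion, not an output of it.

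The gap is in the converse, in the representability step that you correctly identify as the crux. The mechanism you propose --- ``inducting along the AR quiver, using that AR triangles control how both sides transform along irreducible morphisms'' --- cannot work as stated: such an induction only propagates within the component of the AR quiver containing $M$ (and it has no base case even there beyond $X=M$), whereas the isomorphism $\Hom_{\sT}(X,\Sigma\tau M)\to\dual\Hom_{\sT}(M,X)$ must be established for \emph{every} indecomposable $X$, including those in other components, on which neither functor need vanish. (The AR quivers appearing in this very paper typically have infinitely many components, so this is not a degenerate worry.) The argument of Reiten and Van den Bergh is not an induction over the quiver: having fixed a functional $\theta_M$ on $\Hom_{\sT}(M,\Sigma\tau M)$ that is non-zero on the connecting morphism of the AR triangle and vanishes on the radical of that Hom-bimodule, one proves directly, for each object $X$, that the pairing $\Hom_{\sT}(M,X)\times\Hom_{\sT}(X,\Sigma\tau M)\to k$ is non-degenerate, using the almost-split lifting properties of the AR triangle ending at $M$ and of its left-hand companion --- in effect your first implication run backwards. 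With that step replaced, the remainder of your sketch (Yoneda gives functoriality and naturality in $M$, the left AR triangles supply a quasi-inverse so that $S$ is an autoequivalence, and $\tau=\Sigma^{-1}S$ drops out of the construction) is sound.
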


This implies that if $\sT$ has AR triangles, then the AR translation
$\tau$ can be extended to the autoequivalence $\Sigma^{-1}S$.

\begin{definition}
[Calabi-Yau categories]
\label{def:CY}
The category $\sT$ is called $n$-Calabi-Yau if $n$ is the smallest
non-negative integer for which $\Sigma^n$ is a Serre functor.
\end{definition}

\frenchspacing


\begin{thebibliography}{99}

\bibitem{ARS}  M.\ Auslander, I.\ Reiten, and S.\ Smal\o,
  \emph{Representation theory of Artin algebras}.  Cambridge Stud.\
  Adv.\ Math.\ 36, Cambridge University Press, Cambridge, 1997,
  paperback edition. 

\bibitem{AFH}  L.\ L.\ Avramov, H.-B.\ Foxby, and S.\ Halperin,
  Differential Graded homological algebra.  Preprint, version from 21
  June 2006.

\bibitem{BensonI}  D.\ J.\ Benson, \emph{Representations and
    cohomology I}.  Cambridge Stud.\ Adv.\ Math.\ 30, Cambridge
  University Press, Cambridge, 1995.

\bibitem{BN}  M.\ B\"{o}kstedt and A.\ Neeman, Homotopy limits in
  triangulated categories.  \emph{Compositio Math.} \textbf{86} (1993),
  209--234. 

\bibitem{FHTpaper}   Y.\ F\'elix, S.\ Halperin, and J.-C.\ Thomas,
  Gorenstein spaces.  \emph{Adv.\ Math.} \textbf{71} (1988), 92--112. 

\bibitem{FHTbook}  Y.\ F\'elix, S.\ Halperin, and J.-C.\ Thomas,
\emph{Rational Homotopy Theory}.  Grad.\ Texts in Math.\ 205,
Springer, Berlin, 2001.

\bibitem{FJcochain}  A.\ J.\ Frankild and P.\ J\o rgensen, Homological
  properties of cochain Differential Graded algebras.  Preprint
  (2008).  {\tt math.RA/0801.1581.}

\bibitem{GeissKrause}  C.\ Geiss and H.\ Krause, On the
  notion of derived tameness.  \emph{J.\ Algebra Appl.} {\bf 1} (2002),
  133--157.

\bibitem{Happel}  D.\ Happel, On the derived category of a finite
  dimensional algebra.  \emph{Comment.\ Math.\ Helv.} \textbf{62}
  (1987), 339--389.

\bibitem{HKR}  D.\ Happel, B.\ Keller, and I.\ Reiten, Bounded derived
  categories and repetitive algebras.  Preprint (2007).  {\tt
    math.RT/0702302.} 

\bibitem{artop}  P.\ J\o rgensen, Auslander-Reiten theory over
  topological spaces.  \emph{Comment.\ Math.\ Helv.} \textbf{79}
  (2004), 160--182.

\bibitem{arquiv}  P.\ J\o rgensen, The Auslander-Reiten quiver of a
  Poincar\'{e} duality space.  \emph{Al\-ge\-br.\ Re\-pre\-sent.\
    Theory} \textbf{9} (2006), 323--336. 

\bibitem{KellerDG}  B.\ Keller, Deriving DG categories.  \emph{Ann.\
    Sci.\ \'{E}cole Norm.\ Sup.\ (4)} \textbf{27} (1994), 63--102. 

\bibitem{KellerICM}  B.\ Keller, On Differential Graded categories.
  In \emph{Proceedings of the International Congress of Mathematicians
    2006, Vol.\ II} (edited by Marta Sanz-Sol\'{e} et.al.).  European
Mathematical Society Publishing House, Z\"{u}rich, 2007, 151--190.

\bibitem{KellerReiten} B.\ Keller and I.\ Reiten, Acyclic Calabi-Yau
  categories, with an appendix by Michel Van den Bergh.  Preprint
  (2006).  {\tt math.RT/0610594}.

\bibitem{KrauseARZ}  H.\ Krause, Auslander-Reiten triangles
and a theorem of Zimmermann.  \emph{Bull.\ London Math.\ Soc.}
\textbf{37} (2005), 361--372.

\bibitem{KrauseAR}  H.\ Krause, Auslander-Reiten theory via Brown
representability.  \emph{$K$-Theory} \textbf{20} (2000), 331--344.

\bibitem{ReitenVandenBergh}  I.\ Reiten and M.\ Van den Bergh, 
Noetherian hereditary abelian categories satisfying Serre duality. 
\emph{J.\ Amer.\ Math.\ Soc.} \textbf{15} (2002), 295--366.

\bibitem{Ringel}  C.\ M.\ Ringel, \emph{Tame algebras and quadratic
    forms}.  Lecture Notes in Math.\ 1099, Springer, Berlin, 1984. 

\bibitem{Schmidt}  K.\ Schmidt, Auslander-Reiten theory for simply
  connected differential graded algebras.  Thesis, University of
  Paderborn, Paderborn, 2007.  {\tt math.RT/0801.0651}.

\bibitem{Spaltenstein}  N.\ Spaltenstein, Resolutions of unbounded
  complexes.  \emph{Compositio Math.} \textbf{65} (1988), 121--154.

\end{thebibliography}
\end{document}